\numberwithin{equation}{section}
\newtheorem{thm}{Theorem}[section]
\newtheorem*{thm*}{Theorem}%[section]
\newtheorem*{mainthm*}{Main Theorem}%[section]
\newtheorem{prop}[thm]{Proposition}
\newtheorem{lem}[thm]{Lemma}
\newtheorem{cor}[thm]{Corollary}
\theoremstyle{definition}
\theoremstyle{remark}
\newtheorem{rem}{Remark}
\newcommand{\Ga}{\Gamma}
\newcommand{\de}{\delta}
\newcommand{\De}{\Delta}
\newcommand{\e}{\varepsilon}
\newcommand{\la}{\lambda}
\newcommand{\La}{\Lambda}
\newcommand{\sgm}{\sigma}
\renewcommand{\th}{\theta}
\newcommand{\p}{\partial}
\newcommand{\I}{\infty}
\newcommand{\Sc}[1]{\mathcal{#1}}
\newcommand{\F}{\Sc{F}}
\newcommand{\Bo}[1]{\mathbb{#1}}
\newcommand{\R}{\Bo{R}}
\newcommand{\T}{\Bo{T}}
\newcommand{\Tl}{\Bo{T}_\la}
\newcommand{\Zl}{\Bo{Z}_\la}
\newcommand{\lec}{\lesssim}
\newcommand{\gec}{\gtrsim}
\newcommand{\bbar}{\overline}
\newcommand{\ti}{\widetilde}
\newcommand{\supp}[1]{\> \operatorname{supp}\> #1}
\newcommand{\shugo}[1]{\{ #1\}}
\newcommand{\Shugo}[2]{\big\{ \, #1 \, \big| \, #2 \, \big\}}
\newcommand{\LR}[1]{{\langle #1 \rangle }}
\newcommand{\chf}[1]{\textbf{1}_{#1}}
\newcommand{\proj}[1]{P_{\shugo{#1}}}
\newcommand{\eq}[2]{\begin{equation} \label{#1} \begin{split} #2 \end{split} \end{equation}}
\newcommand{\eqq}[1]{\begin{equation*} \begin{split} #1 \end{split} \end{equation*}}
\newcommand{\mat}[1]{\begin{smallmatrix} #1 \end{smallmatrix}}
\newcommand{\norm}[2]{\big\| #1 \big\| _{#2}}
\newcommand{\tnorm}[2]{\| #1 \| _{#2}}
\newcommand{\hx}{\hspace{10pt}}
\newcommand{\ttfrac}[2]{\text{{\footnotesize $\frac{#1}{#2}$}}}
\newcommand{\ttsum}{\textstyle\sum\limits}
\newcommand{\ttsumla}{\ttfrac{1}{\la} \textstyle\sum\limits}
\newcommand{\midashi}[1]{\noindent \textbf{\underline{~#1~}}}
\newcommand{\eqs}[1]{\begin{gather*} #1 \end{gather*}}
\title[``Good'' Boussinesq equation on torus]{Sharp local well-posedness for the ``good'' Boussinesq equation}
\author[Nobu Kishimoto]{Nobu Kishimoto}
\address{Department of Mathematics, Kyoto University, Kyoto 606-8502, Japan}
\email{n-kishi@math.kyoto-u.ac.jp}
\begin{document}

% \begin{abstract}
% We consider some bilinear estimates for one dimensional periodic functions in $X^{s,b}$ spaces associated with the Schr\"odinger equation, which extends the bilinear refinement of the $L^4$ Strichartz estimate on $\R$ to the periodic case.
% Using them, we obtain the sharp local well-posedness and ill-posedness results in Sobolev spaces for the ``good'' Boussinesq equation on $\Bo{T}$.
% This improves the previous local well-posedness result by Oh and Stefanov (2012) in $H^s(\Bo{T})$ with $s>-\frac{3}{8}$ to the regularity threshold $H^{-1/2}(\Bo{T})$.
% The proof is based on the contraction argument in suitably modified $X^{s,b}$ spaces.
% Similar ideas also establish the sharp local well-posedness in $H^{-1/2}(\R )$ for the nonperiodic case, which improves the result of Tsugawa and the author (2010) in $H^s(\R )$ with $s>-\frac{1}{2}$ to the limiting regularity.
% \end{abstract}
\begin{abstract}
In the present article, we prove the sharp local well-posedness and ill-posedness results for the ``good'' Boussinesq equation on $\Bo{T}$; the initial value problem is locally well-posed in $H^{-1/2}(\Bo{T})$ and ill-posed in $H^s(\Bo{T})$ for $s<-\frac{1}{2}$.
Well-posedness result is obtained from reduction of the problem into a quadratic nonlinear Schr\"odinger equation and the contraction argument in suitably modified $X^{s,b}$ spaces.
The proof of the crucial bilinear estimates in these spaces, especially in the lowest regularity, rely on some bilinear estimates for one dimensional periodic functions in $X^{s,b}$ spaces, which are generalization of the bilinear refinement of the $L^4$ Strichartz estimate on $\R$.
Our result improves the known local well-posedness in $H^s(\Bo{T})$ with $s>-\frac{3}{8}$ given by Oh and Stefanov (2012) to the regularity threshold $H^{-1/2}(\Bo{T})$.
Similar ideas also establish the sharp local well-posedness in $H^{-1/2}(\R )$ and ill-posedness below $H^{-1/2}$ for the nonperiodic case, which improves the result of Tsugawa and the author (2010) in $H^s(\R )$ with $s>-\frac{1}{2}$ to the limiting regularity.
\end{abstract}

\maketitle

\section{Introduction}

We investigate the following initial value problem for the ``good'' Boussinesq equation (GB):
\begin{equation}\label{gB}
\left\{
\begin{array}{@{\,}r@{\;}l}
&\p _t^2v-\p _x^2v+\p _x^4v+\p _x^2(v^2)=0,\qquad (t,x) \in [-T,T] \times Z,\\
&v(0,x)=v_0(x),\quad \p _tv(0,x)=v_1(x),
\end{array}
\right.
\end{equation}
where $Z=\R$ or $\Bo{T}:=\R/2\pi \Bo{Z}$.
The unknown function may be real-valued or complex-valued.
The principal aim of this article is to establish the sharp well-posedness and ill-posedness results for \eqref{gB} in Sobolev spaces.

In the 1870's, Boussinesq proposed some model equations for the propagation of shallow water waves as the first mathematical model for the phenomenon of solitary waves which had been observed by Scott-Russell in the 1840's.
One of his equations may be written in the form
\eq{bB}{\p _t^2v-\p _x^2v-\p _x^4v+\p _x^2(v^2)=0,}
which we call ``bad'' Boussinesq equation in contrast with \eqref{gB}.
In fact, \eqref{bB} is linearly unstable due to the exponentially growing Fourier components, though it has a Lax pair formulation and admits the inverse scattering approach (\cite{Z74,DTT82}).
These equations arise as a model for the nonlinear strings (\cite{Z74}), while the Boussinesq type equations of ``good'' sign also arise in the study of shape-memory alloys (\cite{FLS87}).
It is also known that solutions to GB may blow up in finite time (\cite{KL78,S90}).

Let us review some of the known results on the local well-posedness (LWP) of \eqref{gB} in Sobolev spaces.
The first result may go back to Bona and Sachs \cite{BS88}, who applied Kato's theory of quasilinear evolution equations to establish LWP for initial data $(v_0,v_1)$ in, roughly speaking, $H^s(\R)\times H^{s-2}(\R)$ with $s>\frac{5}{2}$.
They also showed the nonlinear stability of solitary wave solutions to \eqref{gB} which leads to the global existence of solutions close to a solitary wave.

Note that \eqref{gB} is formally rewritten as 
\eq{IEgB}{v(t)=&\cos (t\sqrt{-\p _x^2+\p _x^4})v_0+\frac{\sin (t\sqrt{-\p _x^2+\p _x^4})}{\sqrt{-\p _x^2+\p _x^4}}v_1\\
&\hx +\int _0^t \sin ((t-t')\sqrt{-\p _x^2+\p _x^4})\frac{-\p _x^2}{\sqrt{-\p _x^2+\p _x^4}}v^2(t')\,dt',}
in which the loss of two derivatives in the nonlinearity is totally recovered.
Thus, one expects that the Strichartz type inequalities are effective for lower regularities.
Linares \cite{L93} exactly did that and showed LWP of \eqref{gB} for roughly $(v_0,v_1)\in L^2(\R )\times H^{-2}(\R)$.
We see that the difference of regularities between $v_0$ and $v_1$ is natural from the viewpoint of the integral formulation \eqref{IEgB}. 

Now, we recall the relation between \eqref{gB} and quadratic nonlinear Schr\"odinger equations mentioned in \cite{KT10}.
Consider the real-valued case for simplicity (for the complex-valued case we refer to \cite{KT10}).
Putting $u:=v+i(1-\p _x^2)^{-1}\p _tv$, the Cauchy problem \eqref{gB} transforms into
\begin{equation}\label{gB'}
\left\{
\begin{array}{@{\,}r@{\;}l}
&i\p _tu+\p _x^2u=\frac{1}{2}(u-\bar{u})-\frac{1}{4}\omega ^2(u+\bar{u})^2,\qquad (t,x) \in [-T,T] \times Z,\\
&u(0,x)=u_0(x),
\end{array}
\right.
\end{equation}
where the unknown function is complex-valued and
\eqq{\omega ^2:=\frac{-\p _x^2}{1-\p _x^2},\qquad u_0=v_0+i(1-\p _x^2)^{-1}v_1.}
We can recover \eqref{gB} from \eqref{gB'} by putting $v:=\Re u$, $(v_0,v_1):=(\Re u_0, (1-\p _x^2)\Im u_0)$.
The mappings
\eqs{H^s(\Bo{T};\R )\times H^{s-2}(\Bo{T};\R )\ni (v_0,v_1)\mapsto u_0\in H^s(\Bo{T};\Bo{C}),\\
C([0,T];H^s(\Bo{T};\R ))\cap C^1([0,T];H^{s-2}(\Bo{T};\R ))\ni v\mapsto u\in C([0,T];H^s(\Bo{T};\Bo{C}))}
are bi-Lipschitz, so LWP of \eqref{gB} in $H^s\times H^{s-2}$ is equivalent to that of \eqref{gB'} in $H^s$.
Since the term $\frac{1}{2}(u-\bar{u})$ in \eqref{gB'} is harmless, the ``good'' Boussinesq equation can be regarded essentially as the nonlinear Schr\"odinger equation with nonlinearity $\omega ^2(F_1(u,u)+F_2(u,u)+F_3(u,u))$, where $F_1(u,v):=uv$, $F_2(u,v)=u\bar{v}$, and $F_3(u,v)=\bar{u}\bar{v}$.

The initial value problem of quadratic nonlinear Schr\"odinger equations (qNLS)
\begin{equation}\label{qNLS}
\left\{
\begin{array}{@{\,}r@{\;}l}
&i\p _tu+\p _x^2u=F_j(u,u),\qquad (t,x) \in [-T,T] \times Z,\\
&u(0,x)=u_0(x)\in H^s(Z),\qquad j=1,2,3
\end{array}
\right.
\end{equation}
has been extensively studied since Bourgain \cite{B93-1} introduced the $X^{s,b}$ norms (see \eqref{def_Xsb} for the definition).
When we apply the $X^{s,b}$ norm method (\mbox{i.e.} Picard iteration method using the $X^{s,b}$ norms), LWP of \eqref{qNLS} in $H^s$ is often reduced to some bilinear estimate in $X^{s,b}$, typically as follows:
\eqq{\norm{F_j(u,v)}{X^{s,b-1}}\lec \norm{u}{X^{s,b}}\norm{v}{X^{s,b}}.}
(We usually take $b$ such that $\frac{1}{2}<b\,(<1)$ to keep the $X^{s,b}$ norm stronger than $L^\I_tH^s_x$.)
Also for GB, the $X^{s,b}$ norm method has provided substantial progress in low regularity theory. 
See Table~1 for the best known results on the local well-posedness of \eqref{gB} and \eqref{qNLS}.
The result of Fang and Grillakis \cite{FG96} applied the argument of Bourgain to the case of \eqref{gB} on torus and proved LWP in $L^2$, just the same as the best regularity obtained for \eqref{qNLS} with nonlinearity $F_2$.
(Results on GB should be compared with the worst results on qNLS among $F_j$'s, since the nonlinearity in \eqref{gB'} includes all of them.)
Similarly, Farah \cite{F09} and Farah, Scialom \cite{FS10} successfully adapted the argument of Kenig, Ponce, Vega \cite{KPV96-NLS} for qNLS in the $\R$ case and the $\Bo{T}$ case, respectively, obtaining LWP in $H^s\times H^{s-2}$ with $s>-\frac{1}{4}$ in both cases.
These results in \cite{KPV96-NLS,F09,FS10} are the best that one can show by the standard iteration argument in $X^{s,b}$, because the bilinear estimates in $X^{s,b}$ (with $b>\frac{1}{2}$) fail if $s$ is lower than these thresholds.

\begin{table}
\begin{center}
\begin{tabular}{|c|c|c|c|}
\hline 
\makebox(1,0)[t]{$Z$}&\multicolumn{2}{c|}{Quadratic Schr\"odinger equations \eqref{qNLS}} &\makebox(170,0)[t]{``Good'' Boussinesq equation \eqref{gB}}\\
\cline{2-3}
&~$F_1(u,u)$, $F_3(u,u)$~&$F_2(u,u)$&\\
\hline
\multicolumn{4}{|c|}{Bilinear estimate in $X^{s,b}$ ($b>\frac{1}{2}$)}\\
\hline
$\R$&$s>-\frac{3}{4}$ \cite{KPV96-NLS}&$s>-\frac{1}{4}$ \cite{KPV96-NLS}&$s>-\frac{1}{4}$ \cite{F09}\\
\hline
$\Bo{T}$&$s>-\frac{1}{2}$ \cite{KPV96-NLS}&$s\ge 0$ \cite{B93-1}&$s>-\frac{1}{4}$ \cite{FS10}\\
\hline
\multicolumn{4}{|c|}{Local well-posedness in $H^s$ (in $H^s\times H^{s-2}$ for \eqref{gB})}\\
\hline
$\R$&$s\ge -1$ $^*$ \cite{BT06,K09-NLS}&$s\ge -\frac{1}{4}$ $^*$ \cite{KT10}&$s>-\frac{1}{2}$ \cite{KT10}\\
\hline
$\Bo{T}$&$s>-\frac{1}{2}$ \cite{KPV96-NLS}&$s\ge 0$ $^*$ \cite{B93-1}&$s>-\frac{3}{8}$ \cite{OS12}\\
\hline 
\end{tabular}
% \makebox(1,20){}
\end{center}
\caption{The best known results on the local well-posedness of \eqref{gB} and \eqref{qNLS}.
$^*$ indicates the optimality of these results in the sense that the data-to-solution map fails to be continuous below these regularity thresholds.}
\end{table}

It is worth noting the difference between \eqref{gB} and \eqref{qNLS}: Concerning the bilinear estimate, the required regularity for $\Bo{T}$ is $\frac{1}{4}$ worse than that for $\R$ in the results on qNLS, while there is no difference in the results on GB.
To explain this, we should note that the worst nonlinear interaction which breaks the bilinear estimate in $X^{s,b}$ is of \emph{high $\times$ high $\to$ low} type, \mbox{i.e.}, the interaction of two components in high frequency $\shugo{|\xi |\gg 1}$ brings component in low frequency $\shugo{|\xi |<1}$.
The contribution from low frequency is severer on torus than on $\R$, which explains the difference of required regularities between $\R$ and $\Bo{T}$ in the case of qNLS.
However, the additional operator $\omega ^2$ in GB acts as $\p _x^2$ in low frequency and reduces significantly (completely, in the torus case) the low frequency component.
That is why the difference becomes less clear in GB. 

Lack of the bilinear estimate in $X^{s,b}$, however, does not necessarily imply ill-posedness of the problem.
For instance, there may be a chance that one can recover the bilinear estimate by changing function spaces.
In fact, Bejenaru and Tao~\cite{BT06} introduced a suitably modified $X^{s,b}$ space for the problem \eqref{qNLS} on $\R$ with nonlinearity $F_1$, which captures the worst nonlinear interaction in this case and restores the bilinear estimate, extending the previous result in \cite{KPV96-NLS} from $s>-\frac{3}{4}$ to $s\ge -1$.
They also provided a general machinery to show ill-posedness, and actually obtained the ill-posedness of this problem for $s<-1$.
Their ideas were refined further by the author \cite{K09-NLS} to give the same conclusion for the case of another nonlinearity $F_3$, and also appeared in the work of Tsugawa and the author \cite{KT10} treating \eqref{qNLS} with nonlinearity $F_2$.
The idea of modifying $X^{s,b}$ is also effective for GB, and the previous LWP results for \eqref{gB} on $\R$ was improved in \cite{KT10} to $s>-\frac{1}{2}$.
On the other hand, a different approach was recently taken by Oh and Stefanov~\cite{OS12} to push down the regularity threshold for GB on $\T$ to $s>-\frac{3}{8}$; they applied the method of normal forms to show that the Duhamel part of the nonlinear solution is much smoother than the free solution.

In this article, following \cite{KT10}, we shall perform more refined modification of the $X^{s,b}$ norms and establish the sharp LWP results for GB on both $\R$ and $\Bo{T}$.
The main result is as follows.
\begin{thm}\label{thm_main}
Let $Z$ be $\R$ or $\Bo{T}$.
Then the initial value problem \eqref{gB'} in $H^s(Z)$ is locally well-posed for $s\ge -\frac{1}{2}$ and ill-posed for $s<-\frac{1}{2}$.
More precisely, we have the following.

(I) Let $-\frac{1}{4}\ge s\ge -\frac{1}{2}$.
Then, for any $r>0$ and any $u_0\in H^s$ with $\tnorm{u_0}{H^s}\le r$, there exists a solution $u\in C([-T,T];H^s)$ to the integral equation associated to \eqref{gB'} with the existence time $T=T(r)>0$.
Moreover, the solution is uniquely obtained in some Banach space $W^s_T$ embedded continuously into $C([-T,T];H^s)$, and the data-to-solution map from $\Shugo{u_0\in H^s}{\tnorm{u_0}{H^s}\le r}$ to $W^s_T$ is Lipschitz.

(II) Let $s<-\frac{1}{2}$.
Then, there exists $T_0>0$ such that for any $0<t_0\le T_0$ the flow map of \eqref{gB'}, $u_0\in H^{-1/2}\mapsto u(t_0)\in H^{-1/2}$ (defined in (I) for sufficiently small data), is not continuous at the origin as a map on $H^s$.
% there exists $T_0>0$ such that for any $t_0\in (0,T_0]$, there exists $t\in (0,t_0)$ and the flow map $u_0\in H^{-1/2} and $s<-\frac{1}{2}$, there exists a sequence $\shugo{u_{0n}}$ in $H^{-1/2}$ satisfying $\tnorm{u_{0n}}{H^{-1/2}}\ll 1$ and $\lim\limits _{n\to \I}\tnorm{u_{0n}}{H^s}=0$ such that the solutions $u_n$ to \eqref{gB'} with initial data $u_{0n}$, which are by (I) uniquely defined in $W^{-1/2}_{T_0}$ with existence time $T_0$ independent of $n$ and the choice of $\{ u_{0n}\}$, satisfy $\inf\limits _n\tnorm{u_n(t_0)}{H^s}>0$.
\end{thm}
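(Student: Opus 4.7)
The plan is to handle the two halves by different techniques: a contraction-mapping argument in a carefully modified $X^{s,b}$ scale for (I), and a second-Picard-iterate obstruction for (II). For (I), I reduce \eqref{gB'} to its Duhamel formulation and seek a fixed point in a Banach space $W^s_T$ that embeds continuously into $C([-T,T];H^s)$. The plain $X^{s,1/2,1}$-type norm suffices to control the linear and time-cutoff estimates but is too weak to close the bilinear estimates below $s=-\frac{1}{4}$. Following the modification scheme of \cite{BT06,KT10}, I would augment $X^{s,1/2,1}$ with two auxiliary pieces: a low-modulation ``$Y^s$''-type part that absorbs the \emph{high}$\,\times\,$\emph{high}$\,\to\,$\emph{low} output (where the dispersion weight $\LR{\tau-\xi^2}$ cannot be used) and an $\ell^1$-Besov-in-modulation piece that handles the remaining resonant regime.

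The contraction then reduces to the bilinear estimates
\eqq{\tnorm{\omega ^2 F_j(u,v)}{Z^s_T}\lec \tnorm{u}{W^s_T}\tnorm{v}{W^s_T}\qquad (j=1,2,3),}
with $Z^s_T$ the natural norm for the Duhamel integrand. These I would prove by dyadic decomposition in both spatial frequency and modulation, using the resonance identity $(\tau_1-\xi_1^2)+(\tau_2-\xi_2^2)-(\tau-\xi^2)=-2\xi_1\xi_2$ for $F_1$ and its analogues for $F_2,F_3$. Three regimes appear: in \emph{low}$\,\times\,$\emph{high}$\,\to\,$\emph{high} a standard bilinear Strichartz-type estimate closes things; in \emph{high}$\,\times\,$\emph{high}$\,\to\,$\emph{low} the multiplier $\omega ^2$ gains $\xi^2$ at the output, precisely compensating the two-derivative loss that otherwise prevents $s=-\frac{1}{2}$ (this is also what equalizes the $\R$ and $\T$ endpoints for GB); and in \emph{high}$\,\times\,$\emph{high}$\,\to\,$\emph{high} the resonance identity forces one of the three modulation weights to be large, so that an $\ell^1$-summation over modulation bands closes the estimate. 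On $\T$, the periodic bilinear $L^4$ refinement promised in the abstract replaces the continuous bilinear Strichartz estimate and is the decisive new analytic input.

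For (II), I would implement a standard second-iterate obstruction. Writing
\eqq{u^{(2)}(t)=-i\int_0^t e^{i(t-t')\p _x^2}\,\omega ^2 F_j\bigl(e^{it'\p _x^2}u_0,\,e^{it'\p _x^2}u_0\bigr)\,dt',}
I would construct a sequence $u_0^{(N)}\in H^{-1/2}$ with $\tnorm{u_0^{(N)}}{H^s}\to 0$ yet $\tnorm{u^{(2)}(t_0)}{H^s}\gec 1$ for every fixed $t_0\in(0,T_0]$. A natural choice is to place Fourier mass on two narrow bumps at frequencies $N$ and $N+M$ with $1\ll M\ll N$, tailored so that the $F_2$ component of the nonlinearity produces output at frequency $\sim M$ with a nonvanishing time integral (via a near-resonance of the modified dispersion), while $\omega ^2$ at that output contributes only a factor $\sim M^2$. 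A careful bookkeeping in $N$ and $M$ then shows the $H^s$-norm of $u^{(2)}(t_0)$ stays bounded below as $N\to \I$ whenever $s<-\frac{1}{2}$. Since the free-solution contribution tends to $0$ in $H^s$ and the higher-order iterates can be controlled via (I), this yields the asserted discontinuity of the data-to-solution map at the origin in the $H^s$-topology; an analogous construction using frequency-localized bumps on intervals of unit length (instead of Dirac masses) works on $\R$.

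The main obstacle will be the bilinear estimates at the endpoint $s=-\frac{1}{2}$: resonant contributions there are logarithmically borderline, and the modified norm $W^s_T$ must be finely tuned to absorb them while remaining compatible with the free-solution and time-cutoff estimates. Making this juggle work simultaneously for all three nonlinearities $F_1,F_2,F_3$, and in particular on $\T$ where the continuous bilinear Strichartz inequality is unavailable, is the principal technical burden of the proof and the place where the paper's periodic bilinear $L^4$ refinement becomes essential.
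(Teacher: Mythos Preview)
Your plan for (I) is the paper's: contraction in a modified $X^{s,b}$ space $W^s$, with the bilinear estimate proved by dyadic decomposition and the resonance identities \eqref{RE1}--\eqref{RE3}, and with the periodic bilinear $L^4$ refinements (Lemmas~\ref{lem_RB1}--\ref{lem_DRB2}) supplying the missing half-derivative gain at the endpoint $s=-\tfrac12$. Two technical points you pass over but which the paper handles explicitly: the existence time $T(r)$ is obtained by a \emph{scaling} argument rather than by a small-$T$ factor in the bilinear estimate, and uniqueness in $W^{-1/2}_T$ is established via the Muramatu--Taoka device (Proposition~\ref{prop_uniqueness}), precisely because stability of $W^{-1/2}$ under time cutoffs $\psi(t)$ is unclear.

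For (II) the overall shape is right but the explicit construction contains an error that breaks the bookkeeping. You assert that at output frequency $M$ the multiplier $\omega^2$ contributes a factor $\sim M^2$; this holds only for $|M|\ll 1$, whereas for $M\gg 1$ one has $\omega^2(M)=\tfrac{M^2}{1+M^2}\sim 1$. With the correct multiplier, the $F_2$ second iterate at frequency $M$ has Fourier coefficient $\sim \delta^2 N\cdot \tfrac{1}{NM}=\delta^2/M$ (the $1/(NM)$ coming from the oscillatory time integral with phase $\sim NM$), so $\|u^{(2)}(t_0)\|_{H^s}\sim \delta^2 M^{s-1}$, which tends to $0$ if $M\to\infty$. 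Thus the regime $1\ll M\ll N$ with $M$ growing does \emph{not} give a uniform lower bound; the output must sit at a \emph{fixed} low frequency. The paper takes the minimal nonzero frequency (bumps at $N$ and $N+\la^{-1}$ on $\Tl$ after scaling, i.e.\ adjacent integers on the original $\T$; a fixed interval of width $\sim\la^{-1}$ on $\R$). The paper also uses the scaling to large $\la$ in (II) for a second reason you do not mention: the $u^2$ and $\bar u^2$ quadratic pieces, which are not subtracted in $u_{2n}$, land in the remainder $w_n$ at order $\delta^2\la^{-1}$, and only after taking $\la$ large do they become strictly smaller than $\|u_{2n}(t_0)\|_{H^s}\sim\delta^2\la^{-1/2}$.
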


Concerning (I), the definition of the function space $W^s_T$ (modification of $X^{s,b}$) for the periodic case with $s>-\frac{1}{2}$ will be essentially the same as that for $\R$ given in \cite{KT10}.
Proof of the key bilinear estimate, which will be given in Section~\ref{sec_be+}, is also simple, based on some well-known estimates such as Bourgain's $L^4$ Strichartz estimate~\cite{B93-1} and the Sobolev embeddings.
The limiting case $s=-\frac{1}{2}$ is much more difficult to deal with, and we will have to refine further the definition of the function space and exploit some estimates including gain of derivatives.

For (II), we follow the argument in \cite{KT10} which showed the ill-posedness for qNLS of $F_2$ type below $H^{-1/4}$.
This kind of argument was previously established by Bejenaru and Tao \cite{BT06} in more abstract settings, as mentioned above.
They showed that one can upgrade discontinuity of one of the Picard iterates to discontinuity of the whole nonlinear solution map, \emph{in some special situations}.
It should be emphasized that, in such situations, the LWP estimates for the limiting regularity ($s=-\frac{1}{2}$ in our case) should be required for ill-posedness below that regularity.
In fact, we will show the discontinuity (unboundedness) of the second iterate for the problem and apply the argument mentioned above, but it is not possible without LWP for $s=-\frac{1}{2}$.

As a corollary of Theorem~\ref{thm_main}, we establish the sharp LWP and ill-posedness of \eqref{gB}.
\begin{cor}
Let $Z$ be $\R$ or $\Bo{T}$.
Then the initial value problem \eqref{gB} in $H^s(Z)\times H^{s-2}(Z)$ (real- or complex-valued) is locally well-posed for $s\ge -\frac{1}{2}$ and ill-posed for $s<-\frac{1}{2}$.
\end{cor}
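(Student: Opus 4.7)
The plan is simply to transport Theorem~\ref{thm_main} from \eqref{gB'} back to \eqref{gB} through the reduction $u=v+i(1-\p_x^2)^{-1}\p_t v$ already exhibited in the introduction.

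For real-valued $v$, the correspondences $(v_0,v_1)\in H^s\times H^{s-2}\leftrightarrow u_0=v_0+i(1-\p_x^2)^{-1}v_1\in H^s$, and $v\leftrightarrow u$ on the solution spaces, are explicit bi-Lipschitz isomorphisms, as already recorded in the introduction. Hence part~(I) of Theorem~\ref{thm_main} immediately yields LWP of \eqref{gB} in $H^s\times H^{s-2}$ for $s\ge -\frac{1}{2}$, together with Lipschitz continuity of the data-to-solution map. Likewise, part~(II) transfers to ill-posedness of \eqref{gB} for $s<-\frac{1}{2}$: any sequence $u_0^{(n)}\in H^s$ witnessing discontinuity of the flow map for \eqref{gB'} at the origin is realized by the real-valued sequence $(v_0^{(n)},v_1^{(n)})=(\Re u_0^{(n)},(1-\p_x^2)\Im u_0^{(n)})$, and the corresponding $(v^{(n)}(t_0),\p_t v^{(n)}(t_0))$ inherits the discontinuity through the inverse of the bi-Lipschitz correspondence.

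The complex-valued case demands slightly more bookkeeping, since $v$ is no longer the real part of $u$. Following the recipe of \cite{KT10}, one introduces an auxiliary complex unknown $\tilde u$ associated to $\bar v$ alongside $u$, and works with the coupled first-order system satisfied by $(u,\tilde u)$. The nonlinearities of that system are still quadratic combinations drawn from the three types $F_1,F_2,F_3$ treated in the proof of Theorem~\ref{thm_main}, so the bilinear estimates established for the scalar problem apply componentwise and give the same sharp LWP; similarly, the second-iterate obstruction below $s=-\frac{1}{2}$ carries over without essential change.

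The principal, though mild, obstacle is to verify that the coupled-system formulation both preserves the Sobolev norm structure (which follows from the linearity of the reduction together with the unitary action of $(1-\p_x^2)^{\pm 1}$ between the Sobolev scales) and faithfully transmits the ill-posedness mechanism of part~(II) of Theorem~\ref{thm_main}. This is entirely parallel to the scalar case and to the analogous complex-valued reduction already carried out in \cite{KT10}, so no new analytic ingredient is needed beyond Theorem~\ref{thm_main} itself.
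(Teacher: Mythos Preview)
Your proposal is correct and follows exactly the route the paper intends: the corollary is stated without proof, relying on the bi-Lipschitz equivalence between \eqref{gB} and \eqref{gB'} recorded in the introduction (real-valued case) together with the reference to \cite{KT10} for the complex-valued coupled-system reduction. Your write-up simply makes explicit what the paper leaves implicit, including the transfer of both the LWP and the ill-posedness statements through this correspondence.
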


The paper is organized as follows.
In the next section we will define the function spaces and show the required estimates except for the bilinear estimate.
Proof of Theorem~\ref{thm_main}~(I) will be also given.
In Section~\ref{sec_L4}, we will prepare some modified version of the $L^4$ Strichartz estimate for periodic functions.
We will give a proof of the crucial bilinear estimate, for the case $s>-\frac{1}{2}$ in Section~\ref{sec_be+} and for the limiting case $s=-\frac{1}{2}$ in Section~\ref{sec_be0}.
Finally, a proof of Theorem~\ref{thm_main}~(II) will be given in Section~\ref{sec_illp}.
The Appendix section will be devoted to the proof of some estimate which we will use to derive the uniqueness of solutions.

%%%%%%%%%%%%%%%%%%%%%%%%%%%%%%%%%%%%%%%%%
%%%%%%%%%%%%%%%%%%%%%%%%%%%%%%%%%%%%%%%%%
%%%%%%%%%%%%%%%%%%%%%%%%%%%%%%%%%%%%%%%%%

\bigskip
\section{Preliminaries}\label{sec_def}

We begin with the scaling argument.
When $u(t,x)$ solves \eqref{gB'}, 
\eqq{u^\la (t,x):=\la ^{-2}u(\la ^{-2}t,\la ^{-1}x),\qquad \la >0}
solves the following rescaled initial value problem:
\begin{equation}\label{gB''}
\left\{
\begin{array}{@{\,}r@{\;}l}
&i\p _tu^\la +\p _x^2u^\la =\frac{1}{2}\la ^{-2}(u^\la -\bbar{u^\la })-\frac{1}{4}\omega _\la ^2(u^\la +\bbar{u^\la })^2,\quad (t,x) \in [-\la ^2T,\la ^2T] \times Z_\la,\\
&u^\la (0,x)=u_0^\la (x)\in H^s(\Bo{T}_\la)
\end{array}
\right.
\end{equation}
with $u_0^\la (x):=\la ^{-2}u_0(\la ^{-1}x)$, where $Z_\la =\R$ if $Z=\R$ and $Z_\la =\Bo{T}_\la :=\R /(2\pi \la \Bo{Z})$ if $Z=\Bo{T}$.
We have also used the notation
\eqs{\omega _\la ^2=\F _\xi ^{-1}\ttfrac{\la ^2\xi ^2}{1+\la ^2\xi ^2}\F _x.}
For the torus case, we define the Fourier coefficient of a $2\pi \la$ periodic function $\phi$ in the usual fashion as
\eqs{\F_x\phi (k):=\ttfrac{1}{\sqrt{2\pi}}\int _0^{2\pi \la}e^{-ikx}\phi (x)\,dx,\quad k\in \Zl :=\Bo{Z}/\la .}
$\Zl$ is equipped with the normalized counting measure and for $f:\Bo{Z}_\la \to \Bo{C}$,
\eqs{\norm{f}{\ell ^p(\Zl )}:=\big( \ttfrac{1}{\la}\textstyle\sum\limits _{k\in \Zl}|f(k)|^p\big) ^{1/p},\\
\norm{\phi}{H^s(\Tl )}:=\norm{\LR{k}^s\F _x\phi (k)}{\ell ^2_k(\Zl )},\qquad \LR{\,\cdot\,}:=(1+|\,\cdot\,|^2)^{1/2}.}

% \norm{u}{X^{s,b}(\R \times \Bo{T}_\la )}:=\norm{\LR{k}^s\LR{\tau +k^2}^b\ti{u}}{\ell ^2_k(\Zl ;L^2_{\tau}(\R ))},\quad \ti{u}=\F _{t,x}u.}

A simple calculation shows that if $s<0$, we have
\eqq{\norm{u_0^\la}{H^s(Z_\la )}\le \la ^{-s-3/2}\norm{u_0}{H^s(Z_\la )}}
for $\la \ge 1$.
In the following, we treat $0>s>-\frac{3}{2}$ and construct solutions to the rescaled problem \eqref{gB''} with $\la \ge 1$ on the time interval $[-1,1]$ for initial data sufficiently small in $H^s(Z_\la )$.

The $X^{s,b}$ spaces for spacetime functions $u(t,x)$ on $\R \times Z_\la$ is defined via the following $X^{s,b}$ norm
\eq{def_Xsb}{\norm{u}{X^{s,b}(\R \times Z_\la )}:=\norm{\LR{\xi}^s\LR{\tau +\xi ^2}^b\ti{u}}{L^2_\xi (Z_\la^* ;L^2_{\tau}(\R ))},}
where $\ti{u}=\F _{t,x}u$ denotes the spacetime Fourier transform of $u$ and $Z^*_\la=\R$ or $\Bo{Z}_\la$.
Also define the $Y^s$ spaces by
\eqq{\norm{u}{Y^s}:=\norm{\LR{\xi}^s\ti{u}}{L^2_\xi L^1_\tau}.}

Now, we define the space $W^s$, which is modification of $X^{s,b}$, by the following norm
\eqq{\norm{u}{W^s}&:=\norm{P_{\shugo{\LR{\tau +\xi ^2}\lec \LR{\xi}}}u}{X^{s,1}}\\
&\hx +\norm{P_{\shugo{\LR{\tau +\xi ^2}\gec \LR{\xi}}}u}{X^{s+1,0}}+\norm{P_{\shugo{\LR{\tau +\xi ^2}\gg \LR{\xi}^2}}u}{Y^s}\qquad \text{for}~-\ttfrac{1}{4}\ge s>-\ttfrac{1}{2},\\
\norm{u}{W^{-1/2}}&:=\norm{P_{\shugo{\LR{\tau +\xi ^2}\lec \LR{\xi}}}u}{X^{-1/2,1}}\\
&\hx +\norm{P_{\shugo{\LR{\xi}\lec \LR{\tau +\xi ^2}\lec \LR{\xi}^2}}u}{X^{1/2,0}}\\
&\hx +\ttsum _{M\ge 1\;;\;\text{dyadic}}\norm{P_{\shugo{\LR{\tau +\xi ^2}\sim M}\cap \shugo{\LR{\tau +\xi ^2}\gec \LR{\xi}^2}}u}{X^{1/2,0}}+\norm{P_{\shugo{\LR{\tau +\xi ^2}\gg \LR{\xi }^2}}u}{Y^{-1/2}},}
where we denote by $P_\Omega$ the spacetime Fourier projection onto a set $\Omega \subset \R \times Z^*_\la$.

\begin{rem}
In \cite{KT10} we have used similar spaces defined by
\eqq{\norm{u}{Z^s}&:=\norm{P_{\shugo{\LR{\tau +\xi ^2}\lec \LR{\xi}}}u}{X^{s,1}}\\
&\hx +\norm{P_{\shugo{\LR{\tau +\xi ^2}\gec \LR{\xi}}}u}{X^{1/2,s+1/2}}+\norm{P_{\shugo{\LR{\tau +\xi ^2}\gg \LR{\xi}^2}}u}{Y^s},\qquad -\ttfrac{1}{4}>s>-\ttfrac{1}{2}}
for the nonperiodic case.
\end{rem}

For $T>0$, define the restricted space $W^s_T$ by the restrictions of distributions in $W^s$ to $(-T,T)\times Z_\la$, with the norm
\eqq{\norm{u}{W^s_T}:=\inf \Shugo{\norm{U}{W^s}}{U\in W^s~\text{is an extension of $u$ to $\R \times Z_\la$}}.}
This notation will be used for various function spaces of spacetime functions.

These spaces obey the following embeddings.
\begin{lem}\label{lem_embedding}
For $-\frac{1}{4}\ge s\ge -\frac{1}{2}$ and $0< \th \le 1$ with $(s,\th )\neq (-\frac{1}{2},1)$, we have
\eqq{X^{s,1},\,X^{s+\th ,1-\th}\cap Y^s\hookrightarrow W^s\hookrightarrow X^{s,0}\cap Y^s.}
\end{lem}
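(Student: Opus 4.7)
The plan is to reduce each of the two embeddings to pointwise comparisons of the defining Fourier multipliers on each of the (essentially disjoint) modulation regions appearing in the definition of $\|u\|_{W^s}$, together with a dyadic sum argument for the limiting case $s=-\frac{1}{2}$. Since all underlying norms are $L^2$- or $L^2_\xi L^1_\tau$-based, every such pointwise comparison transfers directly to a norm estimate.

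For $W^s\hookrightarrow X^{s,0}\cap Y^s$, the $X^{s,0}$-part is immediate on each region via $\LR{\tau+\xi^2}^0\le\LR{\tau+\xi^2}$ on $A=\{\LR{\tau+\xi^2}\lec\LR{\xi}\}$ and $\LR{\xi}^s\le\LR{\xi}^{s+1}$ on the high-modulation pieces (for $s=-\frac{1}{2}$ the $\ell^2$-sum of the dyadic pieces is trivially dominated by the corresponding $\ell^1$-sum in $\|u\|_{W^{-1/2}}$). The $Y^s$-part follows from Cauchy--Schwarz in $\tau$ applied to the Lebesgue measure of the $\tau$-section of each region: $\int_{A_\xi}\LR{\tau+\xi^2}^{-2}\,d\tau\lec 1$ on $A$, and $|\{\tau:\LR{\xi}\lec\LR{\tau+\xi^2}\lec\LR{\xi}^2\}|\lec\LR{\xi}^2$ on the intermediate region; the high-modulation $Y^s$-piece is literally part of $\|u\|_{W^s}$.

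For $X^{s,1}\hookrightarrow W^s$, the $X^{s,1}$-piece on $A$ is a bare projection, the $X^{s+1,0}$ (or $X^{1/2,0}$) piece on the intermediate region uses $\LR{\xi}^{s+1}\lec\LR{\xi}^s\LR{\tau+\xi^2}$, and the $Y^s$-piece on $C=\{\LR{\tau+\xi^2}\gg\LR{\xi}^2\}$ follows from Cauchy--Schwarz using $\int_{C_\xi}\LR{\tau+\xi^2}^{-2}\,d\tau\lec\LR{\xi}^{-2}$, a two-derivative margin. For $X^{s+\th,1-\th}\cap Y^s\hookrightarrow W^s$, the analogous pointwise comparisons control the $X^{s,1}$-piece on $A$ via $\LR{\tau+\xi^2}^\th\lec\LR{\xi}^\th$ and the $X^{s+1,0}$ (or $X^{1/2,0}$) piece via $\LR{\xi}^{1-\th}\lec\LR{\tau+\xi^2}^{1-\th}$; the $Y^s$-piece is built into the hypothesis.

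The single genuinely delicate step is the dyadic sum $\sum_{M}\|P_{E_M}u\|_{X^{1/2,0}}$ in $W^{-1/2}$, with $E_M=\{\LR{\tau+\xi^2}\sim M\}\cap\{\LR{\tau+\xi^2}\gec\LR{\xi}^2\}$. Since $\LR{\xi}^2\lec M\sim\LR{\tau+\xi^2}$ on $E_M$, the pointwise bound
\[\LR{\xi}\lec M^{\th-1}\LR{\xi}^{-1+2\th}\LR{\tau+\xi^2}^{2-2\th}\quad\text{on}\quad E_M\]
holds for every $\th\in[0,1]$, yielding $\|P_{E_M}u\|_{X^{1/2,0}}\lec M^{(\th-1)/2}\|P_{E_M}u\|_{X^{-1/2+\th,1-\th}}$ (the case $\th=0$ recovers the $X^{-1/2,1}$ bound with a $M^{-1/2}$ gain). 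Summing over dyadic $M\ge 1$ and applying Cauchy--Schwarz leaves the factor $\bigl(\sum_{M\ge 1\,\text{dyadic}}M^{\th-1}\bigr)^{1/2}$, which converges precisely when $\th<1$---exactly the hypothesis $(s,\th)\ne(-\frac{1}{2},1)$; the remaining square sum is $\lec\|u\|_{X^{-1/2+\th,1-\th}}^2$ (or $\lec\|u\|_{X^{-1/2,1}}^2$ in the $\th=0$ case) by essential disjointness of the $E_M$'s. Controlling this logarithmic failure at the endpoint $\th=1$ is the only real obstacle in the proof.
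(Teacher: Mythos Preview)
Your proposal is correct and follows essentially the same approach as the paper: pointwise multiplier comparisons on each modulation region, Cauchy--Schwarz in $\tau$ for the $Y^s$ pieces, and for the $\ell^1$-Besov sum at $s=-\tfrac12$ the same $M^{(\th-1)/2}$ gain (from $\LR{\xi}^2\lec M$ on $E_M$) followed by Cauchy--Schwarz in the dyadic index, converging precisely when $\th<1$. The only organizational difference is that the paper deduces $X^{s,1}\hookrightarrow W^s$ by first noting the $\th=0$ case of the $X^{s+\th,1-\th}\cap Y^s$ argument and then invoking $X^{s,1}\hookrightarrow Y^s$, whereas you bound the $Y^s$ piece on the high-modulation region directly; these are equivalent.
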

\begin{proof}
$W^s\hookrightarrow X^{s,0}$ is trivial by the definition.
For $W^s\hookrightarrow Y^s$, it suffices to show
\eqq{\norm{P_{\shugo{\LR{\tau +\xi ^2}\lec N^2}}u_N}{Y^s}\lec \norm{P_{\shugo{N\lec \LR{\tau +\xi ^2}\lec N^2}}u_N}{X^{s+1,0}}+\norm{P_{\shugo{\LR{\tau +\xi ^2}\lec N}}u_N}{X^{s,1}}}
for each dyadic $N\ge 1$, where $u_N:=P_{\shugo{\LR{\xi}\sim N}}u$.
This follows from the Cauchy-Schwarz inequality as follows:
\eqq{\norm{P_{\shugo{N\lec \LR{\tau +\xi ^2}\lec N^2}}u_N}{Y^s}&\lec \big( N^2\big) ^{1/2}\norm{P_{\shugo{N\lec \LR{\tau +\xi ^2}N^2}}u_N}{X^{s,0}}\\
&\sim \norm{P_{\shugo{N\lec \LR{\tau +\xi ^2}\lec N^2}}u_N}{X^{s+1,0}},}
\eqq{&\ttsum _{1\le M\lec N}\norm{P_{\shugo{\LR{\tau +\xi ^2}\sim M}}u_N}{Y^s}\lec \ttsum _{1\le M\lec N}M^{1/2}\norm{P_{\shugo{\LR{\tau +\xi ^2}\sim M}}u_N}{X^{s,0}}\\
&\hx \lec \ttsum _{1\le M\lec N}M^{-1/2}\norm{P_{\shugo{\LR{\tau +\xi ^2}\sim M}}u_N}{X^{s,1}}\lec \big( \ttsum _{M}M^{-1}\big) ^{1/2}\norm{P_{\shugo{\LR{\tau +\xi ^2}\lec N}}u_N}{X^{s,1}}\\
&\hx \lec \norm{P_{\shugo{\LR{\tau +\xi ^2}\lec N}}u_N}{X^{s,1}}.}

We next consider $X^{s+\th ,1-\th}\cap Y^s\hookrightarrow W^s$.
This immediately follows from the definition if $s>-\frac{1}{2}$.
For $s=-\frac{1}{2}$, it suffices to observe that the Cauchy-Schwarz inequality implies
\eqq{&\ttsum _{M\ge 1}\norm{P_{\shugo{\LR{\tau +\xi ^2}\sim M}\cap \shugo{\LR{\tau +\xi ^2}\gec \LR{\xi}^2}}u}{X^{1/2,0}}\\
&\hx \lec \ttsum _{M\ge 1}\norm{P_{\shugo{\LR{\tau +\xi ^2}\sim M}\cap \shugo{\LR{\tau +\xi ^2}\gec \LR{\xi}^2}}u}{X^{-1/2+\th ,(1-\th )/2}}\\
&\hx \lec \ttsum _{M\ge 1}M^{-(1-\th )/2}\norm{P_{\shugo{\LR{\tau +\xi ^2}\sim M}\cap \shugo{\LR{\tau +\xi ^2}\gec \LR{\xi}^2}}u}{X^{-1/2+\th ,1-\th}}\\
&\hx \lec \norm{P_{\shugo{\LR{\tau +\xi ^2}\gec \LR{\xi}^2}}u}{X^{-1/2+\th ,1-\th}}.}

The above proof also works in the case of $\th =0$.
Then, $X^{s,1}\hookrightarrow W^s$ follows from $X^{s,1}\hookrightarrow Y^s$, which is easily verified by the Cauchy-Schwarz inequality in $\tau$.
\end{proof}

The integral equation associated with the initial value problem \eqref{gB''} is
\eqq{u^\la (t)=e^{it\p _x^2}u_0^\la -i\int _0^te^{i(t-t')\p _x^2}F^\la (t')\,dt',}
where
\eqq{F^\la :=\frac{1}{2}\la ^{-2}(u^\la -\bbar{u^\la})-\frac{1}{4}\omega _\la ^2(u^\la +\bbar{u^\la})^2.}
To solve this on the interval $[-1,1]$, we take the same approach as \cite{K09-KdV} and consider the following equation:
\eqq{u^\la (t)=\psi (t)e^{it\p _x^2}u_0^\la +\Sc{I}F^\la (t),}
where $\psi :\R \to [0,1]$ is a smooth bump function satisfying $\chf{[-1,1]}\le \psi \le \chf{[-2,2]}$ and $\chf{\Omega}$ denotes the characteristic function of $\Omega$, 
\eqq{\Sc{I}F^\la &:=\frac{-i}{\sqrt{2\pi}}\psi (t)e^{it\p _x^2}\F _\xi ^{-1}\int _{\R}\ti{F}^\la (\tau ,\xi )\psi (\tau +\xi ^2)\sum _{n=1}^\I \frac{t^n}{n!}\big[ i(\tau +\xi ^2)\big] ^{n-1}\,d\tau \\
&\hx +\frac{i}{\sqrt{2\pi}}\psi (t)e^{it\p _x^2}\F _\xi ^{-1}\int _{\R}\ti{F}^\la (\tau ,\xi )\frac{1-\psi (\tau +\xi ^2)}{i(\tau +\xi ^2)}\,d\tau \\
&\hx -i\F ^{-1}_{\tau ,\xi }\Big[ \ti{F}^\la (\tau ,\xi )\frac{1-\psi (\tau +\xi ^2)}{i(\tau +\xi ^2)}\Big] .}
We observe that $\Sc{I}F^\la$ is actually an extension to $t\in \R$ of the inhomogeneous part in the integral equation on $t\in [-1,1]$.
Note that we do not put $\psi (t)$ on the third term.
This is due to the fact that it seems difficult to show the stability of our space $W^{-1/2}$ with respect to time localization, namely, $\tnorm{\psi (t)u}{W^{-1/2}}\lec \tnorm{u}{W^{-1/2}}$.
For the case $s>-\frac{1}{2}$, it turns out that our space has this property, and we may consider the usual equation
\eqq{u^\la (t)=\psi (t)e^{it\p _x^2}u_0^\la -i\psi (t)\int _0^te^{i(t-t')\p _x^2}F^\la (t')\,dt',}
similarly to the nonperiodic case \cite{KT10}.

For convenience, we define the following spacetime Fourier multipliers
\eqs{J^\sgm :=\F _\xi ^{-1}\LR{\xi}^\sgm \F _x ,\qquad \La ^\sgm :=\F _{\tau ,\xi }^{-1}\LR{\tau +\xi ^2}^\sgm \F _{t,x}}
for $\sgm \in \R$.
Homogeneous and inhomogeneous linear estimates are stated as follows.
\begin{lem}\label{lem_linear}
Let $\la \ge 1$ and $-\frac{1}{4}\ge s\ge -\frac{1}{2}$.
Then the following estimates hold with constants independent of $\la$.
\begin{enumerate}
\item $\norm{\eta (t)e^{it\p _x^2}u_0^\la}{W^s}\lec \norm{\eta}{H^1(\R )}\norm{u_0^\la}{H^s(Z_\la )}$ for any $\eta \in \Sc{S}(\R)$.
\item $\norm{\Sc{I}F^\la}{W^s}\lec \norm{\Lambda ^{-1}F^\la}{W^s}$.
\end{enumerate}
\end{lem}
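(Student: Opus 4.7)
The plan is to verify both parts by direct spacetime-Fourier computation against each piece of the norm defining $W^s$.  For (i), observe that the Fourier transform of $\eta (t)e^{it\p _x^2}u_0^\la$ factorizes as $\hat \eta (\tau +\xi ^2)\widehat{u_0^\la}(\xi )$, so after the change of variable $\tau '=\tau +\xi ^2$ and Fubini, every projected norm $\| P_{\{\LR{\tau +\xi ^2}\in I\}}(\cdot )\| _{X^{a,b}}$ (or $\| \cdot \| _{Y^s}$) splits as a product of one factor depending only on $\widehat{u_0^\la}$ (integrated in $\xi$) and one factor depending only on $\hat \eta$ (integrated in $\tau '$).  For $s>-\tfrac12$ each such bound is immediate from $\eta \in H^1$.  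The delicate piece is the dyadic sum $\sum _{M\ge 1}\| P_{\{\LR{\tau +\xi ^2}\sim M,\,\gec \LR{\xi }^2\}}(\cdot )\| _{X^{1/2,0}}$ in the definition of $W^{-1/2}$, whose $M$-th summand factors as $(A_MB_M)^{1/2}$ with
\eqq{A_M:=\int _{\LR{\xi}\lec M^{1/2}}\LR{\xi}\,|\widehat{u_0^\la}(\xi )|^2\,d\xi ,\qquad B_M:=\int _{\LR{\tau '}\sim M}|\hat \eta (\tau ')|^2\,d\tau '.}
Cauchy-Schwarz in $M$ with weights $M^{-1/2}$ and $M^{+1/2}$ yields $\sum _M(A_MB_M)^{1/2}\le \bigl( \sum _M M^{-1}A_M\bigr) ^{1/2}\bigl( \sum _M M\,B_M\bigr) ^{1/2}$; exchanging summation and integration collapses the first factor to $\| u_0^\la \| _{H^{-1/2}}$ via the dyadic identity $\sum _{M\gec \LR{\xi }^2}M^{-1}\sim \LR{\xi }^{-2}$, while the second collapses to $\| \eta \| _{H^1}$.

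For (ii), decompose $\Sc{I}F^\la =I_1+I_2+I_3$ as in the definition.  The third term is easiest: $\ti I_3(\tau ,\xi )=\ti F^\la (\tau ,\xi )(1-\psi (\tau +\xi ^2))/(i(\tau +\xi ^2))$, and since $(1-\psi (z))/z$ is pointwise $\lec \LR{z}^{-1}$ we have $|\ti I_3|\lec |\ti{\La ^{-1}F^\la}|$ everywhere.  Because each constituent of $\| \cdot \| _{W^s}$ is monotone in $|\ti u|$, this gives $\| I_3\| _{W^s}\lec \| \La ^{-1}F^\la \| _{W^s}$ immediately.  The second term has the form $I_2=c\,\psi (t)e^{it\p _x^2}g$ with $\hat g(\xi )=\int \ti F^\la (\tau ,\xi )(1-\psi (\tau +\xi ^2))/(i(\tau +\xi ^2))\,d\tau $, so (i) reduces the task to $\| g\| _{H^s}\lec \| \La ^{-1}F^\la \| _{W^s}$.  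For this one bounds $|\hat g(\xi )|\lec \int _{\LR{\tau +\xi ^2}\gec 1}|\ti{\La ^{-1}F^\la}(\tau ,\xi )|\,d\tau $, splits the $\tau$-integral into the modulation regions defining $W^s$, and applies Cauchy-Schwarz in $\tau$ with the weight corresponding to each region; the region $\LR{\tau +\xi ^2}\gg \LR{\xi }^2$ is matched directly (without Cauchy-Schwarz) by the $Y^s$ piece, and in the limiting case $s=-\tfrac12$ the remaining annulus $\LR{\tau +\xi ^2}\sim \LR{\xi }^2$ is absorbed by the single summand $M\sim \LR{\xi }^2$ of the dyadic sum.

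The first term is the Taylor-series part: writing
\eqq{I_1=\frac{-i}{\sqrt{2\pi}}\sum _{n\ge 1}\frac{t^n\psi (t)}{n!}\,e^{it\p _x^2}w_n,\qquad \hat w_n(\xi ):=i^{n-1}\int \ti F^\la (\tau ,\xi )\psi (\tau +\xi ^2)(\tau +\xi ^2)^{n-1}\,d\tau ,}
the cutoff $\psi (\tau +\xi ^2)$ forces $|\tau +\xi ^2|\le 2$, whence $|\hat w_n(\xi )|\lec 2^{n-1}\int \psi (\tau +\xi ^2)|\ti{\La ^{-1}F^\la}(\tau ,\xi )|\,d\tau $; Cauchy-Schwarz in $\tau$ on the $O(1)$-measure support then gives $\| w_n\| _{H^s}\lec 2^{n-1}\| \La ^{-1}F^\la \| _{X^{s,0}}\lec 2^{n-1}\| \La ^{-1}F^\la \| _{W^s}$ by Lemma~\ref{lem_embedding}.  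Using $\| t^n\psi \| _{H^1(\R )}\lec C^n(n+1)$ and applying (i) termwise, $\| I_1\| _{W^s}\lec \bigl( \sum _n (2C)^n(n+1)/n!\bigr) \| \La ^{-1}F^\la \| _{W^s}$, and the series converges.

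The main obstacle is the consistent handling of the dyadic-sum piece of $W^{-1/2}$ in both parts: the $(M^{-1/2},M^{+1/2})$ weighting in Cauchy-Schwarz is dictated precisely by $\sum _{M\gec \LR{\xi }^2}M^{-1}\sim \LR{\xi }^{-2}$, and it is this identity that places $s=-\tfrac12$ at the boundary of the linear theory.
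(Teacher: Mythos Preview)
Your argument is correct, but it does considerably more work than necessary.  The paper's proof of (i) is a single line: by Lemma~\ref{lem_embedding} one has $X^{s,1}\hookrightarrow W^s$, so
\eqq{\norm{\eta (t)e^{it\p _x^2}u_0^\la}{W^s}\lec \norm{\eta (t)e^{it\p _x^2}u_0^\la}{X^{s,1}}=\norm{\LR{\tau '}\hat \eta (\tau ')}{L^2_{\tau '}}\norm{\LR{\xi}^s\widehat{u_0^\la}}{L^2_\xi}=\norm{\eta}{H^1}\norm{u_0^\la}{H^s},}
and there is no need to treat the dyadic-sum piece of $W^{-1/2}$ separately.  For (ii) the paper is equally brief: for $I_1$ and $I_2$ one applies (i) to obtain the bound $\norm{g}{H^s}$ (respectively $\sum _n\norm{w_n}{H^s}/n!$), and then simply observes $|\hat g(\xi )|\lec \int |\ti{\La ^{-1}F^\la}(\tau ,\xi )|\,d\tau$, i.e.\ $\norm{g}{H^s}\lec \norm{\La ^{-1}F^\la}{Y^s}\lec \norm{\La ^{-1}F^\la}{W^s}$, the last step again by Lemma~\ref{lem_embedding}.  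Your region-by-region splitting of the $\tau$-integral, and your weighted Cauchy--Schwarz over $M$, are in effect reproving the relevant half of the embedding lemma inside the linear estimate.  What your approach does buy is a transparent explanation of why $s=-\tfrac12$ is the boundary (the identity $\sum _{M\gec \LR{\xi}^2}M^{-1}\sim \LR{\xi}^{-2}$), whereas the paper hides this inside Lemma~\ref{lem_embedding}.
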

\begin{proof}
(i) From Lemma~\ref{lem_embedding}, we see that
\eqq{\norm{\eta (t)e^{it\p _x^2}u_0^\la}{W^s}&\lec \norm{\eta (t)e^{it\p _x^2}u_0^\la}{X^{s,1}}=\norm{\LR{\xi}^s\LR{\tau +\xi ^2}\F \eta (\tau +\xi ^2)\F u_0^\la (\xi )}{L^2_{\tau ,\xi}}\\
&=\norm{\psi}{H^1}\norm{u_0^\la}{H^s}.}

(ii) For the first and the second terms in $\Sc{I}F^\la$ we note the support property of $\psi$ and apply the same argument for (i), obtaining the bound $\norm{\La^{-1}F^\la}{Y^s}$.
This is sufficient for the claim since $W^s\hookrightarrow Y^s$ from Lemma~\ref{lem_embedding}.
The estimate for the third term follows directly from the fact
\eqq{\Big| \ti{F}^\la (\tau ,\xi )\frac{1-\psi (\tau +\xi ^2)}{i(\tau +\xi ^2)}\Big| \lec \LR{\tau +\xi ^2}^{-1}|\ti{F}^\la (\tau ,\xi )|.\qedhere}
\end{proof}

As discussed in \cite{KT10}, linear terms in the right hand side of \eqref{gB''} is negligible when $\la$ is sufficiently large.
This can be seen from the following lemma.
\begin{lem}\label{lem_nonlinear1}
For $\la \ge 1$ and $-\frac{1}{4}\ge s\ge -\frac{1}{2}$, we have
\eqq{\norm{\La ^{-1}u^\la}{W^s}+\norm{\La ^{-1}\bbar{u^\la}}{W^s}\lec \norm{u^\la}{W^s}.}
\end{lem}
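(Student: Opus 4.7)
The plan is to reduce both estimates to a short chain of inequalities that only uses the embeddings of Lemma~\ref{lem_embedding}. The key observation is the Fourier-level identity
\eqq{\norm{\La^{-1}w}{X^{s,1}}=\norm{w}{X^{s,0}},}
valid for any tempered distribution $w$ on $\R\times Z_\la$, which holds because $\La^{-1}$ acts as multiplication by $\LR{\tau+\xi^2}^{-1}$ in Fourier and thus exactly cancels one unit of the modulation weight appearing in the $X^{s,1}$ norm.

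First I would handle $\norm{\La^{-1}u^\la}{W^s}\lec\norm{u^\la}{W^s}$ by chaining $X^{s,1}\hookrightarrow W^s\hookrightarrow X^{s,0}$ from Lemma~\ref{lem_embedding} with the above identity:
\eqq{\norm{\La^{-1}u^\la}{W^s}\lec\norm{\La^{-1}u^\la}{X^{s,1}}=\norm{u^\la}{X^{s,0}}\lec\norm{u^\la}{W^s}.}

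For $\norm{\La^{-1}\bbar{u^\la}}{W^s}\lec\norm{u^\la}{W^s}$ I would apply the same chain with $\bbar{u^\la}$ in place of $u^\la$, after observing that the $X^{s,0}$ norm is invariant under complex conjugation. Indeed, $|\widetilde{\bbar{u^\la}}(\tau,\xi)|=|\widetilde{u^\la}(-\tau,-\xi)|$, and the change of variables $(\tau,\xi)\mapsto(-\tau,-\xi)$ preserves both the integration measure and the weight $\LR{\xi}^{2s}$, yielding $\norm{\bbar{u^\la}}{X^{s,0}}=\norm{u^\la}{X^{s,0}}$. Thus
\eqq{\norm{\La^{-1}\bbar{u^\la}}{W^s}\lec\norm{\La^{-1}\bbar{u^\la}}{X^{s,1}}=\norm{\bbar{u^\la}}{X^{s,0}}=\norm{u^\la}{X^{s,0}}\lec\norm{u^\la}{W^s}.}

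Uniformity of the implicit constants in $\la\ge 1$ comes for free since the constants in Lemma~\ref{lem_embedding} are independent of $\la$. There is essentially no genuine obstacle here: the delicate structure of $W^s$, including the extra dyadic sum piece at the limiting regularity $s=-\frac12$, has been fully absorbed into the abstract embedding result, and the role of $\La^{-1}$ is merely to convert one unit of modulation weight into a free gain on the $X^{s,1}$ side.
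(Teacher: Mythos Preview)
Your proof is correct and follows essentially the same route as the paper: both use the chain $X^{s,1}\hookrightarrow W^s\hookrightarrow X^{s,0}$ from Lemma~\ref{lem_embedding} together with the identity $\norm{\La^{-1}w}{X^{s,1}}=\norm{w}{X^{s,0}}$ and the conjugation invariance $\norm{\bbar{u^\la}}{X^{s,0}}=\norm{u^\la}{X^{s,0}}$. The paper simply states this more tersely as $W^s\hookrightarrow X^{s,0}=\La X^{s,1}\hookrightarrow \La W^s$.
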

\begin{proof}
It follows from the embedding $W^s\hookrightarrow X^{s,0}~(=\La X^{s,1})~\hookrightarrow \La W^s$ given in Lemma~\ref{lem_embedding} and $\norm{\bbar{u^\la}}{X^{s,0}}=\norm{u^\la}{X^{s,0}}$.
\end{proof}

Now, we state the key bilinear estimate, which will be proved in Sections~\ref{sec_be+}--\ref{sec_be0}.
\begin{prop}\label{prop_be}
Let $\la \ge 1$ and $-\frac{1}{4}\ge s\ge -\frac{1}{2}$.
Then, we have
\eqs{\norm{\La^{-1}\omega _\la ^2(u^\la \bbar{v^\la})}{W^s}\lec C_s(\la )\norm{u^\la}{W^s}\norm{v^\la}{W^s},\\
C_s(\la )=\begin{cases}
\la ^{-2s-1/2} &\text{for}\quad -\frac{1}{4}>s\ge -\frac{1}{2},\\
\big( \log (1+\la )\big) ^{1/2} &\text{for}\quad s=-\frac{1}{4}.
\end{cases}}
If $u^\la \bbar{v^\la}$ is replaced by $u^\la v^\la$ or $\bbar{u^\la}\bbar{v^\la}$, then the above estimate holds without $C_s(\la )$.
\end{prop}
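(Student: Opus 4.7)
The plan is a standard dyadic decomposition in frequency and modulation, reducing the estimate to an $L^2$-bilinear inequality on each dyadic piece that is closed using the $L^4$ Strichartz bounds and their refinements on $\Tl$ from Section~\ref{sec_L4}. Concretely, I would decompose $u^\la$ and $v^\la$ into pieces of spatial frequency $\sim N_1, N_2$ and modulation $\LR{\tau+\xi^2}\sim L_1, L_2$, and project the output onto frequency $\sim N_3$ and modulation $\sim L_3$. The crucial geometric input is the resonance identity: for the $u^\la\bbar{v^\la}$ interaction one has $\Phi = 2\xi_2\xi_3$, giving
\eqq{\max(L_1,L_2,L_3) \gec N_2 N_3,}
which is weakest precisely when the output frequency $N_3$ is small. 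For the $u^\la v^\la$ and $\bbar{u^\la}\bbar{v^\la}$ interactions one instead has $\Phi = -2\xi_1\xi_2$, of size $\sim N_1 N_2$ in the dangerous high-high-to-low regime; this alone explains why no $C_s(\la)$ is needed there and reduces those cases to a simpler variant of what follows.

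After applying the Cauchy-Schwarz inequality on the resonant $\delta$-set and reducing to $L^4$-Strichartz-type bounds, the proof splits into three regimes according to the relative sizes of $N_1, N_2, N_3$. In the high-high-to-high and high-low-to-high regimes $\omega_\la^2 \sim 1$, the resonance constraint $L_{\max}\gec N^2$ is strong, and the $X^{s+\th,1-\th}$ component of $W^s$ supplies the $\eps$ of derivative room needed to sum over dyadic modulations, essentially as in \cite{KT10, FS10}. The critical regime is high-high-to-low, $N_1\sim N_2 = N \gg N_3$: here $\omega_\la^2$ contributes only $\min(1,(\la N_3)^2)$, the resonance weakens to $L_{\max}\gec N N_3$, and the output weight is $N_3^s$ with $s<0$. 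Inserting the dyadic $L^4$ bound and summing in $N_3$ down to the smallest available frequency $1/\la$ produces the stated $C_s(\la) = \la^{-2s-1/2}$, with the logarithmic endpoint at $s=-\tfrac14$ arising from a borderline $N_3$-sum.

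The main obstacle is the endpoint $s=-\tfrac12$, where no $\eps$ of room remains and the $X^{s+\th,1-\th}$ ingredient above is no longer admissible. Here the more elaborate $W^{-1/2}$ norm is indispensable: the Besov-type modulation summation
\eqq{\ttsum_{M\ge 1}\norm{P_{\shugo{\LR{\tau+\xi^2}\sim M}\cap\shugo{\LR{\tau+\xi^2}\gec \LR{\xi}^2}}u}{X^{1/2,0}}}
plays the role of the missing $\eps$-summation at intermediate modulations, while the $Y^{-1/2}$ piece absorbs the very large modulations and the $X^{-1/2,1}$ piece carries the small modulations. Combined with the bilinear refinement of Bourgain's $L^4$ Strichartz inequality on $\Tl$ developed in Section~\ref{sec_L4}, which supplies a small extra smoothing when the two interacting frequencies are well-separated, this barely closes the $N_3$-sum at the critical rate $\la^{1/2}$ predicted by scaling; the remaining dyadic sums in $L_1, L_2, L_3$ and in the high input frequency $N$ are then routine.
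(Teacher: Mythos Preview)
Your proposal is correct and follows essentially the same approach as the paper: the decomposition into high-low-to-high, high-high-to-high, and high-high-to-low regimes corresponds to the paper's regions $\Omega_1\cup\Omega_2$, and $\Omega_3\cup\Omega_4$, the resonance identities you quote are exactly \eqref{RE1}--\eqref{RE3}, and the paper obtains $C_s(\la)$ precisely by proving the dyadic bound $N_3^{2s+1/2}$ on $\Omega_4$ and summing over $N_3\in[\la^{-1},1]$. At the endpoint $s=-\tfrac12$ the paper likewise relies on the refined bilinear $L^4$ estimates of Section~\ref{sec_L4} (Lemmas~\ref{lem_RB1}--\ref{lem_DRB2}) together with the $\ell^1$-Besov modulation structure of $W^{-1/2}$, with the latter being genuinely needed only in the sub-region $\Omega_{22b}$; your outline captures all of this, though the actual execution requires a finer sub-decomposition (the paper's $\Omega_{jk}$ and $\Omega_{jka}$ regions) than your sketch indicates.
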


Finally, we employ the argument of Muramatu and Taoka~\cite{MT04} to prove the uniqueness of solutions in $W^s_T$.
This approach, also previously taken in \cite{K09-NLS,K09-KdV}, is effective especially if the resolution space is not a simple $X^{s,b}$ space but modified in a complicated way.
Note that the simple scaling argument used in \cite{KT10} to establish the uniqueness for the nonperiodic case with $s>-\frac{1}{2}$ cannot be applied the limiting regularity $s=-\frac{1}{2}$.

The following proposition is the key for our argument.
It was essentially proved in \cite{K09-KdV}, Lemma~4.2, employing the result of \cite{MT04}, Theorem~2.5.
However, we will give a complete proof in Appendix to keep the article self-contained.
\begin{prop}\label{prop_uniqueness}
Let $Z$ be either $\R ^d$ or $\Bo{T}^d_\la:=\R ^d/(2\pi\la \Bo{Z})^d$ for $d\ge 1$, $\la >0$.
Let $s\in \R$ and $\Sc{X}^s$ be a Banach space of functions on $\R _t\times Z_x$ with the following properties:
\begin{enumerate}
\item $\Sc{S}(\R \times Z)$ is dense in $\Sc{X}^s$,
\item $X^{s,b}(\R \times Z)\hookrightarrow \Sc{X}^s\hookrightarrow C_t(\R;H^s(Z))$ for some $b>\frac{1}{2}$,
\item $X^{s',b'}(\R \times Z)\hookrightarrow \Sc{X}^s$ for some $s'\in \R$ and $\frac{1}{2}\le b'<1$.
\end{enumerate}
Suppose that a function $u\in \Sc{X}^s$ satisfies $u(0,\cdot )=0$ in $H^s(Z)$.
Then, we have
\eq{limit_T}{\lim _{T\to +0}\norm{u}{\Sc{X}^s_T}=0.}
\end{prop}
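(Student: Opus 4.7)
The plan is a three-part approximation argument that separates the initial-value contribution of $u$ from its vanishing remainder, using all three structural properties. Fix $\varepsilon >0$. By the density property~(i), choose $\phi \in \Sc{S}(\R \times Z)$ with $\norm{u-\phi}{\Sc{X}^s} <\varepsilon$. The embedding $\Sc{X}^s \hookrightarrow C_t(\R;H^s(Z))$ from~(ii), combined with $u(0,\cdot)=0$, yields $\norm{\phi(0,\cdot)}{H^s(Z)} \lec \varepsilon$. I then decompose $\phi = \phi_L + \phi_R$ where
\eqq{\phi_L(t,x) := \chi(t)\,\bigl(e^{it\p_x^2}\phi(0,\cdot)\bigr)(x), \qquad \phi_R := \phi -\phi_L,}
for some $\chi \in C_c^\I (\R)$ with $\chi(0)=1$, so that $\phi_R(0,\cdot) = 0$ and both pieces are smooth in $(t,x)$.

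For the linear piece, a direct Fourier computation gives $\ti\phi_L(\tau,\xi)=\hat\chi(\tau+\xi^2)\F_x\phi(0,\cdot)(\xi)$, hence $\norm{\phi_L}{X^{s,b}}=\norm{\chi}{H^b_t}\norm{\phi(0,\cdot)}{H^s}$. The embedding $X^{s,b}\hookrightarrow \Sc{X}^s$ in~(ii) then gives $\norm{\phi_L}{\Sc{X}^s_T} \le \norm{\phi_L}{\Sc{X}^s} \lec \varepsilon$.

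For the vanishing remainder, write $\phi_R(t,x) = t\,g(t,x)$ with $g(t,x) := \int_0^1 (\p_t\phi_R)(rt,x)\,dr$ smooth in $(t,x)$. For $T\in (0,1]$, the function
\eqq{V_T(t,x) := \chi(t/T)\phi_R(t,x) = T\,\ti\chi(t/T)\,g(t,x), \qquad \ti\chi(s):=s\chi(s),}
agrees with $\phi_R$ on $(-T,T)\times Z$ and thus is an admissible extension. By property~(iii),
\eqq{\norm{\phi_R}{\Sc{X}^s_T} \le \norm{V_T}{\Sc{X}^s} \lec \norm{V_T}{X^{s',b'}} = T\,\norm{\ti\chi(t/T)\,g}{X^{s',b'}}.}
Passing via the isometry $\norm{h}{X^{s',b'}} \sim \norm{e^{-it\p_x^2}h}{H^{b'}_t H^{s'}_x}$ to the Schr\"odinger picture, I interpolate between the endpoint estimates $\norm{\ti\chi(t/T)g}{X^{s',0}}\lec T^{1/2}$ (using the physical-time support $|t|\le 2T$) and $\norm{\ti\chi(t/T)g}{X^{s',1}}\lec T^{-1/2}$ (derivative falling on $\ti\chi(t/T)$ costs $T^{-1}$) to obtain $\norm{V_T}{X^{s',b'}}\lec T^{3/2-b'}$, which tends to $0$ as $T\to +0$ because $b' < 1$. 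Combining gives
\eqq{\norm{u}{\Sc{X}^s_T} \le \norm{u-\phi}{\Sc{X}^s} + \norm{\phi_L}{\Sc{X}^s} + \norm{\phi_R}{\Sc{X}^s_T} \lec \varepsilon + o_T(1),}
so letting first $T\to +0$ and then $\varepsilon\to 0$ yields \eqref{limit_T}.

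The main technical obstacle is making the time-localization interpolation in the last step rigorous, uniformly in $T$ and, when $Z = \Bo{T}^d_\la$, in the torus parameter $\la$. The subtlety is that the auxiliary function $g$ is smooth but not compactly supported in $x$ (it inherits the dispersive tail of $e^{it\p_x^2}\phi(0,\cdot)$), so one has to verify that its $X^{s',b'}$ norm on a bounded time slab is finite --- this ultimately reduces to the fact that $e^{it\p_x^2}$ is an $H^{s'}$-isometry for each $t$, together with smoothness of $\phi(0,\cdot)$. The hypothesis $b' < 1$ is essential: it is precisely the condition that makes $3/2 - b' > 0$, thereby converting the extension estimate into genuine smallness as $T\to +0$, and it is exactly why the argument succeeds even at the limiting regularity $s=-\frac{1}{2}$ where a simple scaling approach of the kind used in~\cite{KT10} is no longer available.
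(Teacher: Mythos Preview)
Your argument is correct and takes a genuinely different, considerably more elementary route than the paper's. Both proofs share the same opening moves: approximate $u$ by a Schwartz function, peel off the free evolution $\chi(t)e^{it\p_x^2}\phi(0,\cdot)$ (which is small because $\phi(0,\cdot)$ is close to $u(0,\cdot)=0$ via the embedding $\Sc{X}^s\hookrightarrow C_tH^s$), and then handle the remainder $\phi_R$ vanishing at $t=0$. The divergence is in this last step. The paper invokes the Muramatu--Taoka theory: it passes to the mixed $\ell^1$-Besov scale $B^{s,b}_{2,1}$, establishes a difference-norm characterization (Lemma~\ref{lem_equivnorm}), proves an equivalent description of the \emph{restricted} norm via an integral $\int_{-2T}^{2T}|\rho|^{-b}G_T(f;\rho)\,d\rho/|\rho|$ (Lemma~\ref{lem_final}), and controls the latter using Muramatu's representation formula (Lemma~\ref{lem_rep}) and several kernel estimates. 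You instead write $\phi_R=t\,g$ with $g$ smooth in $t$ with values in every $H^m$, take the extension $V_T=\chi(t/T)\phi_R=T\tilde\chi(t/T)g$, pass to the Schr\"odinger picture $\tnorm{h}{X^{s',b'}}=\tnorm{e^{-it\p_x^2}h}{H^{b'}_tH^{s'}_x}$, and interpolate the elementary endpoint bounds $\tnorm{\tilde\chi(\cdot/T)G}{L^2_tH^{s'}_x}\lec T^{1/2}$ and $\tnorm{\tilde\chi(\cdot/T)G}{H^1_tH^{s'}_x}\lec T^{-1/2}$ to get $T^{3/2-b'}$ decay.

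What each approach buys: yours is short and transparent, and makes it completely clear why the hypothesis $b'<1$ in~(iii) is exactly what is needed (it gives the exponent $3/2-b'>1/2>0$). The paper's route is heavier but yields the stand-alone Besov result Theorem~\ref{thm_MT}, which is of independent interest and applicable whenever one works directly in $B^{s,b}_{2,1}$-type spaces; it also avoids touching the Schr\"odinger group altogether in the remainder estimate. One small point worth making explicit in your write-up: the cutoff $\chi$ should be taken with $\chi\equiv 1$ on $[-1,1]$ (not merely $\chi(0)=1$) so that $V_T$ genuinely extends $\phi_R|_{(-T,T)}$; this is harmless but should be stated.
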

From Lemma~\ref{lem_embedding}, we have $X^{s,1}\hookrightarrow W^s\hookrightarrow Y^s$ and $X^{s+\frac{1}{4},\frac{3}{4}}\cap Y^s\hookrightarrow W^s$.
Then, since $Y^s\hookrightarrow C_t(H^s)$ and $X^{s+\frac{1}{4},\frac{3}{4}}\hookrightarrow Y^s$, we see that our space $W^s$ ($-\frac{1}{4}\ge s\ge -\frac{1}{2}$) satisfies the above properties (i)--(iii).

We are in a position to prove the local well-posedness for \eqref{gB'}.
\begin{proof}[Proof of Theorem~\ref{thm_main} (I)]
We only consider the case $s=-\frac{1}{2}$ for simplicity.
We first show that the map
\eqq{\Phi _{\la ,u_0^{\la}}: u^\la \mapsto e^{it\p _x^2}u_0^\la -i\int _0^te^{i(t-t')\p _x^2}F^\la (t')\,dt'}
associated with the rescaled problem \eqref{gB''} is a contraction on a ball in $W^{-1/2}_1$ if $\la$ is sufficiently large and $\tnorm{u_0^\la }{H^{-1/2}}$ is sufficiently small.
By Lemmas~\ref{lem_linear}, \ref{lem_nonlinear1}, and Proposition~\ref{prop_be}, there exists $C_0>1$ independent of $\la$ such that
\eqq{\norm{\Phi _{\la ,u_0^{\la}}(u^\la )}{W^{-1/2}_1}\le C_0\Big( \norm{u_0^\la}{H^{-1/2}}+\la ^{-2}\norm{u^\la}{W_1^{-1/2}}+\la ^{1/2}\norm{u^\la}{W^{-1/2}_1}^2\Big)}
for $u^\la \in W^{-1/2}_1$ and
\eqq{&\norm{\Phi _{\la ,u_0^{\la}}(u^\la )-\Phi _{\la ,u_0^{\la}}(v^\la )}{W^{-1/2}_1}\\
&\le C_0\Big( \la ^{-2}\norm{u^\la -v^\la}{W_1^{-1/2}}+\la ^{1/2}\big( \norm{u^\la}{W^{-1/2}_1}+\norm{v^\la}{W^{-1/2}_1}\big) \norm{u^\la -v^\la}{W^{-1/2}_1}\Big)}
for $u^\la ,v^\la \in W^{-1/2}_1$.
Therefore, if $\la ^2\ge \la _0^2:=4C_0$ and $\tnorm{u_0^\la}{H^{-1/2}}\le (4C_0)^{-2}\la ^{-1/2}$, the map $\Phi _{\la ,u_0^{\la}}$ will be a contraction on $\Shugo{u^\la \in W^{-1/2}_1}{\tnorm{u^\la}{W^{-1/2}_1}\le (4C_0)^{-1}\la ^{-1/2}}$, giving a solution $u^\la$ to \eqref{gB''} for this initial datum $u_0^\la$ on the time interval $[-1,1]$.
Lipschitz continuity of the map $u_0^\la\mapsto u^\la$ is easily verified in a similar manner.

Next, consider the original problem \eqref{gB'} with initial data $u_0$ satisfying $\tnorm{u_0}{H^{-1/2}}\le r$.
If $r\le (4C_0)^{-2}\la _0^{1/2}$, then we have $\tnorm{u_0^{\la _0}}{H^{-1/2}}\le \la _0^{-1}\tnorm{u_0}{H^{-1/2}}\le (4C_0)^{-2}\la _0^{-1/2}$ and obtain a solution $u^{\la _0}$ to the $\la _0$-rescaled problem on $[-1,1]$, thus obtain a solution $u$ to \eqref{gB'} with existence time $T=\la _0^{-2}$.
If $(4C_0)^{-2}\la _0^{1/2}<r=:(4C_0)^{-2}\la (r)^{1/2}$, we solve the $\la (r)$-rescaled problem on $[-1,1]$ in the same way to obtain a solution of \eqref{gB'} with $T=\la (r)^{-2}$.

Finally, we show the uniqueness of solution.
Assume that $u$ and $v$ are solutions to \eqref{gB'} with the common data $u_0$ and the common existence time $T_0$, and that both of them belong to $W^{-1/2}_{T_0}$.
Then, $u^\la (t,x):=\la ^{-2}u(\la ^{-2}t,\la ^{-1}x)$ and $v^\la$ are solutions to the $\la $-rescaled problem \eqref{gB''} with initial data $u_0^\la (x):=\la ^{-2}u_0(\la ^{-1}x)$.
Applying Lemma~\ref{lem_nonlinear1} and Proposition~\ref{prop_be} to the integral equation, it follows for $0<T\le \min \shugo{1,\,\la ^2T_0}$ that
\eq{est_uniqueness}{\norm{u^{\la}-v^\la}{W^{-1/2}_T}\le C_0\Big( \la ^{-2}+\la ^{1/2}\norm{u^\la}{W^{-1/2}_T}+\la ^{1/2}\norm{v^\la}{W^{-1/2}_T}\Big) \norm{u^\la -v^\la}{W^{-1/2}_T}.}
From Lemma~\ref{lem_linear}, we see that
\eqq{\la ^{1/2}\norm{u^\la}{W^{-1/2}_T}&\le \la ^{1/2}\norm{u^\la -e^{it\p _x^2}u_0^\la}{W^{-1/2}_T}+\la ^{1/2}\norm{e^{it\p _x^2}u_0^\la}{W^{-1/2}_T}\\
&\le \la ^{1/2}\norm{u^\la -e^{it\p _x^2}u_0^\la}{W^{-1/2}_T}+C\la ^{-1/2}\norm{u_0}{H^{-1/2}}.}
Since $(u^\la -e^{it\p _x^2}u_0^\la )\big| _{t=0}=0$, Proposition~\ref{prop_uniqueness} implies that $\tnorm{u^\la -e^{it\p _x^2}u_0^\la}{W^{-1/2}_T}\to 0$ as $T\to 0$.
Now, for given $u_0$ we choose $\la =\la (\tnorm{u_0}{H^{-1/2}})\ge 1$ sufficiently large so that
\eqq{C_0\la ^{-2}+2C_0C\la ^{-1/2}\norm{u_0}{H^{-1/2}}\le \frac{1}{4},}
and choose $T=T(\la ,u,v)>0$ so small that
\eqq{C_0\la ^{1/2}\big( \norm{u^\la -e^{it\p _x^2}u_0^\la}{W^{-1/2}_T}+\norm{v^\la -e^{it\p _x^2}u_0^\la}{W^{-1/2}_T}\big) \le \frac{1}{4}.}
Then, \eqref{est_uniqueness} yields $\tnorm{u^{\la}-v^\la}{W^{-1/2}_T}=0$, so we conclude that $u(t)=v(t)$ for $-\la ^{-2}T\le t\le \la ^{-2}T$.
If $\la ^{-2}T=T_0$, then the claim follows.
If not, the coincidence on the whole interval $[-T_0,T_0]$ is obtained by a continuity argument.
\end{proof}

%%%%%%%%%%%%%%%%%%%%%%%%%%%%%%%%%%
%%%%%%%%%%%%%%%%%%%%%%%%%%%%%%%%%%
%%%%%%%%%%%%%%%%%%%%%%%%%%%%%%%%%%

\section{Refined bilinear $L^4$ estimates for periodic functions}\label{sec_L4}

In this section, we prepare some bilinear refinement of the $L^4$ Strichartz estimate.
Let us begin with the following.
\begin{lem}[Bilinear $L^4$ estimate]\label{lem_L4}
Let $b,b'\in \R$ be such that
\eqq{b>\tfrac{1}{4},\quad b'>\tfrac{1}{4},\quad b+b'\ge \tfrac{3}{4}.}
Then, we have
\eqq{\norm{uv}{L^2_{t,x}(\R \times Z_\la )}\lec \norm{u}{X^{0,b}}\norm{v}{X^{0,b'}}.}
In LHS, $uv$ can be replaced by $\bar{u}\bar{v}$ or $u\bar{v}$.
\end{lem}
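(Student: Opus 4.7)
The plan is to use Plancherel and the Cauchy--Schwarz inequality on the Fourier side to reduce the bilinear estimate to a uniform pointwise bound on a weighted convolution kernel, and then to evaluate the kernel by integrating first in the modulation variable $\tau_1$ and then in the frequency $\xi_1$. The statements with $u\bar v$ or $\bar u\bar v$ in place of $uv$ come for free, since $|u\bar v|=|\bar u\bar v|=|uv|$ pointwise and so all three $L^2$ norms agree; only the case $uv$ is nontrivial.

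I first set $f(\tau,\xi):=\LR{\tau+\xi^2}^b\ti{u}(\tau,\xi)$ and $g(\tau,\xi):=\LR{\tau+\xi^2}^{b'}\ti{v}(\tau,\xi)$, so that $\tnorm{f}{L^2}=\tnorm{u}{X^{0,b}}$ and analogously for $g$. By Plancherel on $\R\times Z_\la$, $\tnorm{uv}{L^2}^2=\tnorm{\ti{uv}}{L^2_\tau \ell^2_\xi}^2$, and $\ti{uv}$ is (up to normalization) the spacetime convolution of $\ti{u}$ and $\ti{v}$. Applying Cauchy--Schwarz in the convolution variable $(\tau_1,\xi_1)$ then yields pointwise
\eqq{|\ti{uv}(\tau,\xi)|^2\lec W(\tau,\xi)\,\bigl(|f|^2\ast|g|^2\bigr)(\tau,\xi),}
where
\eqq{W(\tau,\xi):=\ttfrac{1}{\la}\ttsum_{\xi_1\in Z_\la^*}\int_\R \frac{d\tau_1}{\LR{\tau_1+\xi_1^2}^{2b}\LR{(\tau-\tau_1)+(\xi-\xi_1)^2}^{2b'}}}
(with the normalized sum replaced by an integral when $Z_\la^*=\R$). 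Integrating over $(\tau,\xi)$ and using Fubini reduces the lemma to proving $\sup_{\tau,\xi}W(\tau,\xi)\lec 1$ uniformly in $\la\ge 1$.

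To bound $W$, I carry out the $\tau_1$-integration first. The substitution $\sigma_1=\tau_1+\xi_1^2$ turns the inner integral into $\int_\R \frac{d\sigma_1}{\LR{\sigma_1}^{2b}\LR{K-\sigma_1}^{2b'}}$, with
\eqq{K=\tau+\xi_1^2+(\xi-\xi_1)^2=2(\xi_1-\xi/2)^2+(\tau+\xi^2/2)}
quadratic in $\xi_1$. The classical one-dimensional calculus inequality bounds this by $\LR{K}^{1-2b-2b'}$ when $0<2b,2b'<1$ and $2b+2b'>1$, with a strictly sharper bound when one of $2b,2b'$ is $\ge 1$; the hypotheses $b,b'>\frac{1}{4}$ and $b+b'\ge\frac{3}{4}$ ensure the relevant conditions. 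Writing $\eta=\xi_1-\xi/2$ and $M=\tau+\xi^2/2$, the remaining $\xi_1$-sum is uniformly comparable (in $\la\ge 1$) to the integral $\int_\R\LR{2\eta^2+M}^{-\alpha}d\eta$ with $\alpha=2b+2b'-1$, since the $\la^{-1}$ normalization turns the lattice sum into a Riemann sum for this integral. The integral is bounded uniformly in $M$ as soon as $\alpha>\frac{1}{2}$, i.e.\ $b+b'>\frac{3}{4}$, which settles the strict-interior range.

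The hard part will be the endpoint $b+b'=\frac{3}{4}$, where the $\eta$-integral above diverges logarithmically and the Cauchy--Schwarz reduction is lossy by exactly a log factor. The plan to recover this case is to invoke Bourgain's $L^4$ Strichartz estimate $\tnorm{u}{L^4_{t,x}}\lec\tnorm{u}{X^{0,3/8}}$, valid uniformly on $\R$ and on $\Tl$ for $\la\ge 1$, which together with H\"older gives the symmetric endpoint $b=b'=\frac{3}{8}$ directly. For the asymmetric endpoints $b,b'\in(\frac{1}{4},\frac{1}{2})$ with $b\ne b'$ and $b+b'=\frac{3}{4}$ a more delicate argument is required---either a refined dyadic decomposition of the weighted kernel that exploits the algebraic structure of the resonance curve $\shugo{2\eta^2+M=0}$ (recovering the missing half-power through dispersive concentration), or a bilinear interpolation combined with additional nonsymmetric strict-interior estimates. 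The main obstacle in the whole proof is exactly this endpoint.
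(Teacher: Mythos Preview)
Your direct Cauchy--Schwarz reduction to a uniform bound on $W(\tau,\xi)$ is clean and correct in the open range $b+b'>\tfrac34$, and your remark that the $u\bar v$ and $\bar u\bar v$ cases follow for free from $|uv|=|u\bar v|=|\bar u\bar v|$ is right. The gap is exactly where you say it is: on the endpoint line $b+b'=\tfrac34$ you handle only the symmetric point $b=b'=\tfrac38$ (by quoting Bourgain), and you leave the asymmetric points $b\neq b'$ unproved. Neither of your suggested remedies closes this. Bilinear interpolation between $(\tfrac38,\tfrac38)$ and any strictly interior point $(b_0,b_0')$ with $b_0+b_0'>\tfrac34$ only yields points on the connecting segment, and apart from $(\tfrac38,\tfrac38)$ that segment lies strictly above the critical line; no new endpoint is reached. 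And ``a refined dyadic decomposition'' is a program, not a proof.

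The paper's argument bypasses the difficulty by decomposing $u$ and $v$ dyadically in modulation \emph{before} Cauchy--Schwarz rather than after. On each block $\LR{\tau_1+\xi_1^2}\sim M_1$, $\LR{(\tau-\tau_1)+(\xi-\xi_1)^2}\sim M_2$, the same resonance identity you use shows that for fixed $(\tau,\xi)$ the variable $\xi_1$ is confined to a set of measure $O(\max\{M_1,M_2\}^{1/2})$ and $\tau_1$ to one of measure $O(\min\{M_1,M_2\})$; Cauchy--Schwarz on each block then gives the kernel factor $\max\{M_1,M_2\}^{1/4}\min\{M_1,M_2\}^{1/2}$. The gain over your global Cauchy--Schwarz is genuine off-diagonal decay in $M_1/M_2$: writing $M_1=MM'$, $M_2=M$ (in the region $M_1\ge M_2$), the double sum becomes
\[
\sum_{M'\ge 1} M'^{\,1/4-b}\sum_{M\ge 1} M^{\,3/4-b-b'}\big((MM')^b\tnorm{\ti u_{MM'}}{L^2}\big)\big(M^{b'}\tnorm{\ti v_M}{L^2}\big),
\]
and since $M^{3/4-b-b'}\le 1$ on the endpoint line, a single Cauchy--Schwarz in $M$ followed by the geometric sum in $M'$ (using only $b>\tfrac14$) closes the estimate; the region $M_2\ge M_1$ is symmetric and uses $b'>\tfrac14$. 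This dyadic-then-Schur step is precisely the ``refined'' argument you allude to, and carrying it out is where the full endpoint line is won.
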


If we take $b=b'=\frac{3}{8}$, then Lemma~\ref{lem_L4} becomes equivalent to the well-known $L^4$ estimate of Bourgain~\cite{B93-1} stated as $\tnorm{u}{L^4_{t,x}}\lec \tnorm{u}{X^{0,3/8}}$.
In fact, we will always use it with $b=b'=\frac{3}{8}$ in this article.
We give a proof in the following, but a similar proof can be found in \cite{Taobook}, Proposition~2.13.
\begin{proof}
For a dyadic number $M\ge 1$, we write $u_M$ to denote the restriction of $u$ to the frequency dyadic region $\Shugo{(\tau ,\xi )\in \R \times Z^*_\la}{\LR{\tau +\xi ^2}\sim M}$.
Then, the Plancherel theorem and the triangle inequality imply
\eqq{&\norm{uv}{L^2_{t,x}}\le \ttsum _{M_1,M_2\ge 1} \norm{u_{M_1}v_{M_2}}{L^2_{t,x}}\\
&\sim \ttsum _{M_1,M_2\ge 1} \bigg( \displaystyle{\int _{\R \times Z^*_\la}}~\bigg| \displaystyle{\int _{\R \times Z^*_\la}}\ti{u}_{M_1}(\tau _1,\xi _1)\ti{v}_{M_2}(\tau -\tau _1,\xi -\xi _1)\,d\tau _1d\xi _1\bigg| ^2\,d\tau d\xi \bigg) ^{1/2}.}
(If $Z^*_\la =\Zl$, $\int _{\R \times Z^*_\la}f(\tau ,\xi)d\tau d\xi$ means $\ttsumla _{k\in \Zl}\int _\R f(\tau ,k)d\tau$.)
By Cauchy-Schwarz inequality in $(\tau _1,\xi _1)$, this is bounded by
\eqq{\ttsum _{M_1,M_2\ge 1} \bigg( \sup\limits _{(\tau ,\xi )\in \R \times Z^*_\la}\displaystyle{\int _{\big\{ \mat{\LR{\tau _1+\xi _1^2}\sim M_1\\[-2pt] \LR{\tau -\tau _1+(\xi -\xi _1)^2}\sim M_2}\big\}}}\,d\tau _1d\xi _1\bigg) ^{1/2}\norm{\ti{u}_{M_1}}{L^2_{\tau ,\xi}}\norm{\ti{v}_{M_2}}{L^2_{\tau ,\xi}}.}

Let us estimate the integral.
The quantity
\eq{id_L4_1}{(\tau _1+\xi _1^2)+(\tau -\tau _1+(\xi -\xi _1)^2)=\tau +\tfrac{\xi ^2}{2}+\tfrac{1}{2}(\xi -2\xi _1)^2}
is bounded by $\max \shugo{M_1,\,M_2}$ whenever $(\tau _1,\xi _1)$ is in the integral domain.
This implies that, for fixed $(\tau ,\xi )$, $\xi _1$ is restricted to at most two intervals of measure $O(\max \shugo{M_1,\,M_2}^{1/2})$.
% Therefore, the number of $k_1$'s that contribute is at most $O(\la\max \shugo{M_1,\,M_2}^{1/2})$.
On the other hand, if we also fix $\xi _1$, then $\tau _1$ is restricted to a set with its measure $O(\min \shugo{M_1,M_2})$, so we obtain
\eqq{\ttsum _{M_1,M_2\ge 1} \max \shugo{M_1,M_2}^{1/4}\min \shugo{M_1,M_2}^{1/2}\norm{\ti{u}_{M_1}}{L^2_{\tau ,\xi}}\norm{\ti{v}_{M_2}}{L^2_{\tau ,\xi}}}
as a bound of $\tnorm{uv}{L^2_{x,t}}$.

We may restrict our attention to the case $M_1\ge M_2$ by symmetry and estimate
\eqq{&\ttsum _{M,M'\ge 1} (MM')^{1/4}\cdot M^{1/2}\norm{\ti{u}_{MM'}}{L^2_{\tau ,\xi}}\norm{\ti{v}_{M}}{L^2_{\tau ,\xi}}\\
&\hx =\ttsum _{M'\ge 1} M'^{(1/4\,-b)}\sum _{M\ge 1} M^{3/4\,-b-b'}\big( (MM')^b\norm{\ti{u}_{MM'}}{L^2_{\tau ,\xi}}\big) \big( M^{b'}\norm{\ti{v}_{M}}{L^2_{\tau ,\xi}}\big) .}
Applying Cauchy-Schwarz inequality in $M$ and then summing over $M'$, we finish the proof.
\end{proof}

The following Lemmas~\ref{lem_RB1}--\ref{lem_DRB2} are modified bilinear $L^4$ estimates for periodic case which provide $\frac{1}{2}$ gain of regularity.
These estimates should be of independent interest; compare them to Lemma~\ref{lem_L4}, which has no regularity gain.

This type of smoothing effect is well-known in the nonperiodic case.
For instance, we can show that
\eqq{\norm{\iint _{\R ^2}\LR{\xi _1-(\xi -\xi _1)}^{1/2}\ti{u}(\tau _1,\xi _1)\ti{v}(\tau -\tau _1,\xi -\xi _1)\,d\tau _1d\xi _1}{L^2_{\tau ,\xi}(\R ^2)}\lec \norm{u}{X^{0,b}}\norm{v}{X^{0,b}}}
for $b>\frac{1}{2}$ (see \mbox{e.g.} Corollary~2.3 in \cite{G00p}).
In the periodic setting, such a `dispersive smoothing effect' is not available in general.
However, we can still capture the same type of smoothing effect if functions are restricted out of an `exceptional' frequency region.
On the other hand, there seems to be no way to gain regularity with respect to $x$ in this exceptional region, but such region is sufficiently small so that we can gain enough regularity with respect to $t$.
Even in the periodic case, these refined estimates enable us to make arguments close to those for the nonperiodic problem.

Estimates of similar spirit are found in the paper by Molinet (\cite{M11p}, Lemma~3.4), who treated the KdV and the modified KdV equations.
See also a result of the author (\cite{K11}, Lemma~2.5) for higher dimensional cases.
The feature of our estimates is that we specify `exceptional' frequency set where the dispersive smoothing vanishes, and separate it from unexceptional region in the estimates.

\begin{lem}\label{lem_RB1}
Let $\la \ge 1$ and
\eqq{\Ga _1:=\Shugo{(\tau ,k,\tau _1,k_1)\in (\R \times \Zl )^2}{&\Big| k_1-(k-k_1)+\sqrt{2(-\tau -\tfrac{1}{2}k^2)}\Big| \le \la ^{-1}\\
&\text{or}~\Big| k_1-(k-k_1)-\sqrt{2(-\tau -\tfrac{1}{2}k^2)}\Big| \le \la ^{-1}}.}
Then, we have
\begin{align}
&\norm{\ttfrac{1}{\la}\ttsum _{k_1\in \Zl}\displaystyle{\int _\R}\LR{k_1-(k-k_1)}^{1/2}\chf{A_1\cap \Ga _1^c}(\tau ,k,\tau _1,k_1)\ti{u}(\tau _1,k_1)\ti{v}(\tau -\tau _1,k-k_1)\,d\tau _1}{\ell ^2_kL^2_\tau}\notag \\
&\hx \hx \lec M_1^{1/2}M_2^{1/2}\norm{u}{L^2_{t,x}}\norm{v}{L^2_{t,x}},\label{est_RB1} \\
&\norm{\ttfrac{1}{\la}\ttsum _{k_1\in \Zl}\displaystyle{\int _\R}\chf{A_1\cap \Ga _1}(\tau ,k,\tau _1,k_1)\ti{u}(\tau _1,k_1)\ti{v}(\tau -\tau _1,k-k_1)\,d\tau _1}{\ell ^2_kL^2_\tau}\notag \\
&\hx \hx \lec \la ^{-1/2}\min \shugo{M_1^{1/2},\,M_2^{1/2}}\norm{u}{L^2_{t,x}}\norm{v}{L^2_{t,x}},\label{est_RB1'}
\end{align}
where $M_1,M_2\ge 1$ are dyadic numbers and
\eqq{A_1:=\Shugo{(\tau ,k,\tau _1,k_1)}{\LR{\tau _1+k_1^2}\lec M_1~\text{and}~\LR{(\tau -\tau _1)+(k-k_1)^2}\lec M_2}.}
\end{lem}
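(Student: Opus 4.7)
The plan is to reduce both estimates to a pointwise-in-$(\tau,k)$ bound on the ``support measure'' of the constraint via Cauchy--Schwarz, and then to carry out a lattice-point counting argument exploiting the resonance relation. Specifically, Cauchy--Schwarz in $(\tau_1,k_1)$ with respect to the measure $\ttfrac{1}{\la}dk_1 d\tau_1$, followed by Plancherel in $(\tau,k)$, reduces \eqref{est_RB1} and \eqref{est_RB1'} respectively to
\begin{gather*}
\sup_{(\tau,k)}\ttfrac{1}{\la}\ttsum_{k_1\in\Zl}\int_\R \LR{2k_1-k}\chf{A_1\cap\Ga_1^c}\,d\tau_1\lec M_1 M_2,\\
\sup_{(\tau,k)}\ttfrac{1}{\la}\ttsum_{k_1\in\Zl}\int_\R \chf{A_1\cap\Ga_1}\,d\tau_1\lec \la^{-1}\min\shugo{M_1,M_2}.
\end{gather*}

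The key ingredient is the resonance identity
\[
(\tau_1+k_1^2)+\bigl((\tau-\tau_1)+(k-k_1)^2\bigr)=\tau+\tfrac{1}{2}k^2+\tfrac{1}{2}(2k_1-k)^2,
\]
so that, setting $\rho:=2k_1-k$, $A:=-2\tau-k^2$, and $M:=M_1+M_2$, the quantity $\rho^2$ is confined on $A_1$ to an interval $[A-CM,A+CM]$, while for each fixed $k_1$ the $\tau_1$-slice of $A_1$ has measure $\lec\min\shugo{M_1,M_2}$. The $\Ga_1$ estimate is immediate: $\Ga_1$ forces $|\rho\mp\sqrt{A}|\le\la^{-1}$ (in particular $A\ge 0$), and since $\shugo{2k_1-k:k_1\in\Zl}$ is a $2/\la$-spaced arithmetic progression, only $O(1)$ values of $k_1$ qualify, which together with the $\tau_1$-slice bound gives $\la^{-1}\min\shugo{M_1,M_2}$. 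On $\Ga_1^c$ I split by the size of $A$ relative to $M$: if $A\lec M$, then $|\rho|\lec M^{1/2}$, the lattice count is $\lec\la M^{1/2}$, and the weight $\LR{\rho}\lec M^{1/2}$, yielding $M\cdot\min\shugo{M_1,M_2}\lec M_1 M_2$; if $A\gg M$, the admissible $\rho$ lies in two intervals $\pm[\sqrt{A-CM},\sqrt{A+CM}]$ of length $\sim M/\sqrt{A}$, and the $\la^{-1}$-exclusion around $\pm\sqrt{A}$ built into $\Ga_1^c$ forces the lattice count to be $\lec\la\cdot M/\sqrt{A}$, so the total is $\ttfrac{1}{\la}\cdot\la\cdot M/\sqrt{A}\cdot\sqrt{A}\cdot\min\shugo{M_1,M_2}\lec M_1 M_2$.

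The main obstacle is exactly this lattice-point counting in the regime $A\gg M$ on $\Ga_1^c$: the naive count of $2/\la$-spaced lattice points in an interval of length $W$ is $\lec\la W+1$, and the additive ``$+1$'' would be fatal, for it is multiplied by the weight $\sqrt{A}$ and would destroy any dispersive gain when $W\ll\la^{-1}$. Excising the $\la^{-1}$-neighborhood of $\pm\sqrt{A}$ removes this single problematic lattice point in the relevant regime, at the price of treating $\Ga_1$ separately with only an $L^2$-gain of $\la^{-1/2}$ and no regularity smoothing---which is precisely the reason the lemma splits into the two complementary assertions with distinct right-hand sides.
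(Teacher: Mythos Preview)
Your proof is correct and follows essentially the same route as the paper's: the Cauchy--Schwarz reduction to a pointwise measure bound, the resonance identity, the dichotomy $|\tau+\tfrac12 k^2|\lec M$ versus $|\tau+\tfrac12 k^2|\gg M$, and the observation that on $\Ga_1^c$ the $\la^{-1}$-exclusion eliminates the ``$+1$'' in the lattice count (the paper phrases this last point equivalently as deriving $\max\{M_1,M_2\}\gec \la^{-1}|\tau+\tfrac12 k^2|^{1/2}$ directly from $\Ga_1^c$). Your closing paragraph correctly identifies the conceptual reason for the $\Ga_1$/$\Ga_1^c$ split.
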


\begin{proof}
Similarly to the proof of Lemma~\ref{lem_L4}, \eqref{est_RB1} and \eqref{est_RB1'} are reduced to the estimates
\eq{est_RB1-}{\sup\limits _{(\tau ,k)\in \R \times \Zl}\displaystyle{\ttsumla _{k _1\in \Zl}}\displaystyle{\int _\R}\LR{k_1-(k-k_1)}\chf{A_1\cap \Ga _1^c}(\tau ,k,\tau _1,k_1)\,d\tau _1\lec M_1M_2}
and
\eq{est_RB1'-}{\sup\limits _{(\tau ,k)\in \R \times \Zl}\displaystyle{\ttsumla _{k _1\in \Zl}}\displaystyle{\int _\R}\chf{A_1\cap \Ga _1}(\tau ,k,\tau _1,k_1)\,d\tau _1\lec \la ^{-1}\min \shugo{M_1,\,M_2},}
respectively.

We fix $(\tau ,k)$ and exploit the identity \eqref{id_L4_1} again.
In $A_1\cap \Ga _1^c$, it follows that
\eqq{&O(\max \shugo{M_1,\,M_2})=\big| (k_1-(k-k_1))^2+2(\tau +\tfrac{1}{2}k^2)\big| \\
&\hx \ge \la ^{-1}\max \shugo{\Big| k_1-(k-k_1)+\sqrt{2(-\tau -\tfrac{1}{2}k^2)}\Big| ,\,\Big| k_1-(k-k_1)-\sqrt{2(-\tau -\tfrac{1}{2}k^2)}\Big|}\\
&\hx \ge \la ^{-1}\big| \tau +\tfrac{1}{2}k^2\big| ^{1/2}.}
Consider the following two cases.

(i) $\max \shugo{M_1,\,M_2}\gec |\tau +\tfrac{1}{2}k^2|$.
In this case, $(k_1-(k-k_1))^2$ is bounded by $O(\max \shugo{M_1,\,M_2})$, so we have
\eqq{&\ttsumla _{k _1\in \Zl}\displaystyle{\int _\R}\LR{k_1-(k-k_1)}\chf{A_1\cap \Ga _1^c}(\tau ,k,\tau _1,k_1)\,d\tau _1\\
&\hx \lec \min \shugo{M_1,\,M_2}\ttsumla _{\mat{k_1\in \Zl ;~(k_1-(k-k_1))^2\\ =O(\max \shugo{M_1,\,M_2})}}\LR{k_1-(k-k_1)}\\
&\hx \lec \min \shugo{M_1,\,M_2}\max \shugo{M_1,\,M_2}=M_1M_2.}
This proves \eqref{est_RB1-}.

(ii) $\la ^{-1}|\tau +\tfrac{1}{2}k^2|^{1/2}\lec \max \shugo{M_1,\,M_2}\ll |\tau +\tfrac{1}{2}k^2|$.
We may assume $\tau +\frac{1}{2}k^2<0$.
It follows that
\eqq{|k_1-(k-k_1)|=|2(\tau +\tfrac{1}{2}k^2)|^{1/2}+O(|\tau +\tfrac{1}{2}k^2|^{-1/2}\max \shugo{M_1,\,M_2}).}
Since $|\tau +\frac{1}{2}k^2|^{-1/2}\max \shugo{M_1,\,M_2}\gec \la ^{-1}$, the number of $k_1\in \Zl$ satisfying the above condition is comparable to $\la |\tau +\tfrac{1}{2}k^2|^{-1/2}\max \shugo{M_1,\,M_2}$, and such a $k_1$ satisfies $|k_1-(k-k_1)|\sim |\tau +\tfrac{1}{2}k^2|^{1/2}$.
Hence, we obtain the same bound, and then \eqref{est_RB1-}.

For \eqref{est_RB1'-}, it is sufficient to observe that $\Ga _1$ contains only an $O(1)$-number of $k_1$'s for each $(\tau ,k)$.
\end{proof}

\begin{lem}\label{lem_RB2}
Let $\la \ge 1$ and
\eqq{\Ga _2:=\Shugo{(\tau ,k,\tau _1,k_1)\in (\R \times \Zl )^2}{\big| \tau -k^2+2kk_1\big| \le \la ^{-1}|k|}.}
Then, we have
\begin{align}
&\norm{|k|^{1/2}\ttfrac{1}{\la}\ttsum _{k_1\in \Zl}\displaystyle{\int _\R}\chf{A_2\cap \Ga _2^c}(\tau ,k,\tau _1,k_1)\ti{u}(\tau _1,k_1)\bbar{\ti{v}(\tau _1-\tau ,k_1-k)}\,d\tau _1}{\ell ^2_kL^2_\tau}\notag \\
&\hx \hx \lec M_1^{1/2}M_2^{1/2}\norm{u}{L^2_{t,x}}\norm{v}{L^2_{t,x}},\label{est_RB2} \\
&\norm{\ttfrac{1}{\la}\ttsum _{k_1\in \Zl}\displaystyle{\int _\R}\chf{A_2\cap \Ga _2}(\tau ,k,\tau _1,k_1)\ti{u}(\tau _1,k_1)\bbar{\ti{v}(\tau _1-\tau ,k_1-k)}\,d\tau _1}{\ell ^2_kL^2_\tau}\notag \\
&\hx \hx \lec \la ^{-1/2}\min \shugo{M_1^{1/2},\,M_2^{1/2}}\norm{u}{L^2_{t,x}}\norm{v}{L^2_{t,x}},\label{est_RB2'}
\end{align}
where $M_1,M_2\ge 1$ are dyadic numbers and
\eqq{A_2:=\Shugo{(\tau ,k,\tau _1,k_1)}{\LR{\tau _1+k_1^2}\lec M_1~\text{and}~\LR{(\tau _1-\tau )+(k_1-k)^2}\lec M_2}.}
\end{lem}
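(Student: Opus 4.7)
The strategy parallels that of Lemma~\ref{lem_RB1}. After applying \CSineq in $(\tau_1, k_1)$ exactly as in the proof of Lemma~\ref{lem_L4}, the two estimates \eqref{est_RB2} and \eqref{est_RB2'} reduce, respectively, to the counting bounds $\sup_{(\tau,k)\in \R\times \Zl}|k|\cdot \frac{1}{\la}\ttsum_{k_1\in \Zl}\int_\R \chf{A_2\cap \Ga_2^c}\,d\tau_1 \lec M_1M_2$ and $\sup_{(\tau,k)\in \R\times \Zl}\frac{1}{\la}\ttsum_{k_1\in \Zl}\int_\R \chf{A_2\cap \Ga_2}\,d\tau_1 \lec \la^{-1}\min\shugo{M_1,\,M_2}$.

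The main tool is the conjugate analogue of \eqref{id_L4_1}, namely $(\tau_1+k_1^2)-((\tau_1-\tau)+(k_1-k)^2)=\tau-k^2+2kk_1$, which is now \emph{linear} rather than quadratic in $k_1$. In $A_2$ this quantity is $O(\max\shugo{M_1,\,M_2})$, so for fixed $(\tau,k)$ (with $k\ne 0$) the variable $k_1$ is confined to an interval in $\R$ of length $O(\max\shugo{M_1,\,M_2}/|k|)$; the number of points of $\Zl$ inside such an interval is $O(1+\la\max\shugo{M_1,\,M_2}/|k|)$. Fixing additionally $k_1$, the set of admissible $\tau_1$ has measure $O(\min\shugo{M_1,\,M_2})$.

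For \eqref{est_RB2'} the defining condition of $\Ga_2$ further sharpens the interval for $k_1$ to length $O(\la^{-1})$, hence to at most $O(1)$ points of $\Zl$; multiplying by $\frac{1}{\la}$ and by the measure in $\tau_1$ gives the required bound $\la^{-1}\min\shugo{M_1,\,M_2}$. For \eqref{est_RB2} the combined count, weighted by $\frac{1}{\la}$ and by $|k|\cdot \min\shugo{M_1,\,M_2}$, equals $|k|\cdot \la^{-1}\cdot \min\shugo{M_1,\,M_2}\cdot (1+\la\max\shugo{M_1,\,M_2}/|k|)=M_1M_2+\min\shugo{M_1,\,M_2}\cdot |k|/\la$. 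The second summand is where $\Ga_2^c$ must be exploited: on $\Ga_2^c\cap A_2$ we have $\la^{-1}|k|<|\tau-k^2+2kk_1|\lec \max\shugo{M_1,\,M_2}$, hence $|k|/\la \lec \max\shugo{M_1,\,M_2}$, so both contributions are $\lec M_1M_2$. The case $k=0$ is trivial since $|k|^{1/2}=0$ kills the left-hand side of \eqref{est_RB2}, and $\Ga_2\cap A_2$ shrinks to a set of measure zero in $\tau$.

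The key structural difference from Lemma~\ref{lem_RB1} is that the resonance function $\tau-k^2+2kk_1$ is linear in $k_1$, so the exceptional set $\Ga_2$ is a single narrow slab (rather than two slabs corresponding to the $\pm$ square roots appearing in $\Ga_1$); this simplification is precisely what produces the sharp $|k|^{1/2}$-gain in the conjugate bilinear interaction. The main technical point, as pinpointed above, is that the $+1$ slack in the count of lattice points in $\Zl$ would spoil \eqref{est_RB2} in the regime of large $|k|$, and this is exactly the role of the $\Ga_2^c$ restriction: it delivers the \emph{a priori} upper bound $|k|/\la \lec \max\shugo{M_1,\,M_2}$ needed to close the estimate.
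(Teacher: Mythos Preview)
Your argument is correct and follows essentially the same route as the paper: reduce via Cauchy--Schwarz to the two counting bounds, use the linear identity $(\tau_1+k_1^2)-((\tau_1-\tau)+(k_1-k)^2)=\tau-k^2+2kk_1$ to confine $k_1$ to an interval of length $O(\max\{M_1,M_2\}/|k|)$, and note that on $\Ga_2$ this shrinks to an $O(\la^{-1})$ interval containing $O(1)$ lattice points. Your explicit handling of the ``$+1$'' in the lattice-point count via the bound $|k|/\la\lec \max\{M_1,M_2\}$ forced by $\Ga_2^c$ is exactly the content the paper leaves implicit when it writes that the interval restriction ``yields \eqref{est_RB2-}'' after referring back to the proof of Lemma~\ref{lem_RB1}.
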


\begin{proof}
We may consider only the case of $k\neq 0$ in the left-hand side of \eqref{est_RB2} and \eqref{est_RB2'}; otherwise, they are trivial.
As before, it suffices to show
\eq{est_RB2-}{\sup\limits _{(\tau ,k)\in \R \times \Zl}|k|\displaystyle{\ttsumla _{k _1\in \Zl}}\displaystyle{\int _\R}\chf{A_2\cap \Ga _2^c}(\tau ,k,\tau _1,k_1)\,d\tau _1\lec M_1M_2}
and
\eq{est_RB2'-}{\sup\limits _{(\tau ,k)\in \R \times \Zl}\displaystyle{\ttsumla _{k _1\in \Zl}}\displaystyle{\int _\R}\chf{A_2\cap \Ga _2}(\tau ,k,\tau _1,k_1)\,d\tau _1\lec \la ^{-1}\min \shugo{M_1,\,M_2}.}

Following the proof of Lemma~\ref{lem_RB1} and using the identity
\eqq{(\tau _1+k_1^2)-(\tau _1-\tau +(k_1-k)^2)=\tau -k^2+2kk_1}
instead of \eqref{id_L4_1}, we see that
\eqq{k_1=\ttfrac{\tau -k^2}{-2k}+O(\,\ttfrac{\max \shugo{M_1,M_2}}{|k|}\,),}
which yields \eqref{est_RB2-}.
\eqref{est_RB2'-} also follows similarly to \eqref{est_RB1'-} in the proof of Lemma~\ref{lem_RB1}.
\end{proof}

% \begin{lem}\label{lem_RB3}
% Let $\la \ge 1$ and
% \eqq{\Ga _3:=\Shugo{(\tau ,k,\tau _1,k_1)\in (\R \times \Zl )^2}{&\Big| k_1-(k-k_1)+\sqrt{2(\tau -\tfrac{1}{2}k^2)}\Big| \le \la ^{-1}\\
% &\text{or}~\Big| k_1-(k-k_1)-\sqrt{2(\tau -\tfrac{1}{2}k^2)}\Big| \le \la ^{-1}}.}
% Then, we have
% \eqq{
% &\norm{\ttfrac{1}{\la}\ttsum _{k_1\in \Zl}\displaystyle{\int _\R}\LR{k_1-(k-k_1)}^{1/2}\chf{A_3\cap \Ga _3^c}(\tau ,k,\tau _1,k_1)\bbar{\ti{u}(-\tau _1,-k_1)}\bbar{\ti{v}(\tau _1-\tau ,k_1-k)}\,d\tau _1}{\ell ^2_kL^2_\tau}\\
% &\hx \hx \lec M_1^{1/2}M_2^{1/2}\norm{u}{L^2_{t,x}}\norm{v}{L^2_{t,x}},\\
% &\norm{\ttfrac{1}{\la}\ttsum _{k_1\in \Zl}\displaystyle{\int _\R}\chf{A_3\cap \Ga _3}(\tau ,k,\tau _1,k_1)\bbar{\ti{u}(-\tau _1,-k_1)}\bbar{\ti{v}(\tau _1-\tau ,k_1-k)}\,d\tau _1}{\ell ^2_kL^2_\tau}\\
% &\hx \hx \lec \la ^{-1/2}\min \shugo{M_1^{1/2},\,M_2^{1/2}}\norm{u}{L^2_{t,x}}\norm{v}{L^2_{t,x}},
% }
% where $M_1,M_2\ge 1$ are dyadic numbers and
% \eqq{A_3:=\Shugo{(\tau ,k,\tau _1,k_1)}{\LR{-\tau _1+(-k_1)^2}\lec M_1~\text{and}~\LR{(\tau _1-\tau )+(k_1-k)^2}\lec M_2}.}
% \end{lem}

% \begin{proof}
% Imitate the proof of Lemma~\ref{lem_RB1} using the identity
% \eqq{(-\tau _1+(-k_1)^2)+(\tau _1-\tau +(k_1-k)^2)=-\tau +\tfrac{1}{2}k^2+\tfrac{1}{2}(k_1-(k-k_1))^2}
% instead of \eqref{id_L4_1}.
% \end{proof}

\begin{lem}\label{lem_DRB1}
Let $\la \ge 1$ and
\eqq{\De _1:=\Shugo{(\tau ,k,\tau _1,k_1)\in (\R \times \Zl )^2}{\big| \tau _1-k_1^2+2k_1k\big| \le \la ^{-1}|k_1|}.}
Then, we have
\begin{align}
&\norm{\ttfrac{1}{\la}\ttsum _{k_1\in \Zl}\displaystyle{\int _\R}|k_1|^{1/2}\chf{B_1\cap \De _1^c}(\tau ,k,\tau _1,k_1)\ti{u}(\tau _1,k_1)\ti{v}(\tau -\tau _1,k-k_1)\,d\tau _1}{\ell ^2_kL^2_\tau}\notag \\
&\hx \hx \lec M^{1/2}M_2^{1/2}\norm{u}{L^2_{t,x}}\norm{v}{L^2_{t,x}},\notag \\
&\norm{\ttfrac{1}{\la}\ttsum _{k_1\in \Zl}\displaystyle{\int _\R}\chf{B_1\cap \De _1}(\tau ,k,\tau _1,k_1)\ti{u}(\tau _1,k_1)\ti{v}(\tau -\tau _1,k-k_1)\,d\tau _1}{\ell ^2_kL^2_\tau}\notag \\
&\hx \hx \lec \la ^{-1/2}\min \shugo{M^{1/2},\,M_2^{1/2}}\norm{u}{L^2_{t,x}}\norm{v}{L^2_{t,x}},\notag
\end{align}
where $M,M_2\ge 1$ are dyadic numbers and
\eqq{B_1:=\Shugo{(\tau ,k,\tau _1,k_1)}{\LR{\tau +k^2}\lec M~\text{and}~\LR{(\tau -\tau _1)+(k-k_1)^2}\lec M_2}.}
\end{lem}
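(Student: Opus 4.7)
The approach mirrors the Cauchy--Schwarz reductions of Lemmas~\ref{lem_RB1} and~\ref{lem_RB2}, but with an important modification: instead of applying Cauchy--Schwarz in the integration variable $(\tau_1, k_1)$ and taking the supremum over the output frequency $(\tau, k)$, we first change variables to $(\tau_2, k_2):= (\tau - \tau_1, k - k_1)$, apply Cauchy--Schwarz in these swapped variables, and then take the supremum over the input frequency $(\tau_1, k_1)$. This swap is forced by the structure of the weight $|k_1|^{1/2}$: it is attached to the $u$-frequency, and on $B_1 \cap \De_1^c$ one only has the crude bound $|k_1| \lec \la(M + M_2)$. A standard output-side reduction summing $|k_1|$ over all such $k_1 \in \Zl$ produces a bound swollen by factors of $\la^2$ and is therefore useless, whereas in the input-side reduction the $B_1$-geometry will restrict $k_2$ to a short interval that cancels the $|k_1|$-weight.

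Concretely, rewriting the bilinear form as $\ttsumla_{k_2}\int f\,\ti{u}(\tau - \tau_2, k - k_2)\,\ti{v}(\tau_2, k_2)\,d\tau_2$ and applying Cauchy--Schwarz in $(\tau_2, k_2)$ to pair $\ti{v}$ with $f\,\ti{u}$, Fubini reduces both estimates to
\eqq{\sup_{(\tau_1, k_1) \in \R \times \Zl}\,\ttsumla_{k_2 \in \Zl}\int_\R |f|^2\,d\tau_2 \,\lec\, C^2,}
where $(\tau, k) = (\tau_1 + \tau_2, k_1 + k_2)$ is implicit in the support of $f$, and $C^2 = MM_2$ or $\la^{-1}\min\{M, M_2\}$ in the first or second estimate respectively.

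The key algebraic identity
\eqq{\big((\tau_1 + \tau_2) + (k_1 + k_2)^2\big) - (\tau_2 + k_2^2) = \tau_1 + k_1^2 + 2k_1 k_2,}
combined with the two modulation conditions in $B_1$, yields $|\tau_1 + k_1^2 + 2k_1 k_2| \lec M + M_2$, which for $k_1 \neq 0$ confines $k_2$ to an interval of length $(M + M_2)/|k_1|$ so that $\ttsumla_{k_2}1 \lec (M + M_2)/|k_1|$. For each admissible $k_2$, the $\tau_2$-integral over $B_1$ is $\lec \min\{M, M_2\}$. For the first estimate, the $|k_1|$-weight in $|f|^2 = |k_1|\chf{B_1 \cap \De_1^c}$ cancels the factor $1/|k_1|$, yielding the desired bound $MM_2$. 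For the second estimate, the $\De_1$ condition tightens the constraint to $|\tau_1 + k_1^2 + 2k_1 k_2| \le \la^{-1}|k_1|$, restricting $k_2$ to an interval of length $\sim 1/\la$ and hence $\ttsumla_{k_2}1 \lec 1/\la$; combined with the $\tau_2$-integral this gives $\lec \min\{M, M_2\}/\la$. The case $k_1 = 0$ contributes trivially in both estimates (the weight vanishes in the first; the $\tau_1$-slice of $\De_1$ is a null set in the second).

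The main obstacle is identifying this input-side reduction: the direct output-side approach used for Lemmas~\ref{lem_RB1} and~\ref{lem_RB2} produces a useless $\la$-dependent bound, and recognizing that the reparametrization---swapping the roles of $u$ and $v$ in the change of variables---restores the resonant cancellation between the $|k_1|$-weight and the $B_1$-geometry is the crucial insight. Once this reduction is in place, the remaining computation is a routine reparametrization of the arguments used for the earlier refined $L^4$ estimates.
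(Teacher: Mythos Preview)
Your argument is correct. The ``input-side'' Cauchy--Schwarz reduction you describe---pairing $\ti v$ with $f\ti u$, changing variables, and taking the supremum in $(\tau_1,k_1)$---is precisely the duality argument the paper invokes. In the paper the proof is the single sentence ``The claim comes down to Lemma~\ref{lem_RB2} through a duality argument'': testing the bilinear form of Lemma~\ref{lem_DRB1} against $w$ and pairing instead with $u$ produces exactly the bilinear form of Lemma~\ref{lem_RB2} (with the roles of the output variable $(\tau,k)$ and the first input $(\tau_1,k_1)$ exchanged, so that $A_2$ becomes $B_1$ and $\Ga_2$ becomes $\De_1$). Your measure computation $\sup_{(\tau_1,k_1)}\ttsumla_{k_2}\int |f|^2\,d\tau_2$ is then line-for-line the estimate \eqref{est_RB2-}--\eqref{est_RB2'-} in the proof of Lemma~\ref{lem_RB2}, after relabeling. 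In particular, the observation you single out as the ``crucial insight''---that one must swap to an input-side reduction to make the $|k_1|$-weight cancel against the $B_1$-geometry---is exactly what duality accomplishes abstractly. The paper's packaging is shorter (it reuses the already-proved Lemma~\ref{lem_RB2}); yours is more self-contained and makes the mechanism of the cancellation explicit. Both are the same proof.
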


\begin{proof}
The claim comes down to Lemma~\ref{lem_RB2} through a duality argument.
\end{proof}

\begin{lem}\label{lem_DRB2}
Let $\la \ge 1$ and
\eqq{\De _2:=\Shugo{(\tau ,k,\tau _1,k_1)\in (\R \times \Zl )^2}{&\Big| k-(k_1-k)+\sqrt{2(-\tau _1-\tfrac{1}{2}k_1^2)}\Big| \le \la ^{-1}\\
&\text{or}~\Big| k-(k_1-k)-\sqrt{2(-\tau _1-\tfrac{1}{2}k_1^2)}\Big| \le \la ^{-1}}.}
Then, we have
\begin{align}
&\norm{\ttfrac{1}{\la}\ttsum _{k_1\in \Zl}\displaystyle{\int _\R}\LR{k-(k_1-k)}^{1/2}\chf{B_2\cap \De _2^c}(\tau ,k,\tau _1,k_1)\ti{u}(\tau _1,k_1)\bbar{\ti{v}(\tau _1-\tau ,k_1-k)}\,d\tau _1}{\ell ^2_kL^2_\tau}\notag \\
&\hx \hx \lec M^{1/2}M_2^{1/2}\norm{u}{L^2_{t,x}}\norm{v}{L^2_{t,x}},\notag \\
&\norm{\ttfrac{1}{\la}\ttsum _{k_1\in \Zl}\displaystyle{\int _\R}\chf{B_2\cap \De _2}(\tau ,k,\tau _1,k_1)\ti{u}(\tau _1,k_1)\bbar{\ti{v}(\tau _1-\tau ,k_1-k)}\,d\tau _1}{\ell ^2_kL^2_\tau}\notag \\
&\hx \hx \lec \la ^{-1/2}\min \shugo{M^{1/2},\,M_2^{1/2}}\norm{u}{L^2_{t,x}}\norm{v}{L^2_{t,x}},\notag
\end{align}
where $M,M_2\ge 1$ are dyadic numbers and
\eqq{B_2:=\Shugo{(\tau ,k,\tau _1,k_1)}{\LR{\tau +k^2}\lec M~\text{and}~\LR{(\tau _1-\tau )+(k_1-k)^2}\lec M_2}.}
\end{lem}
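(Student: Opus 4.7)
The plan is to deduce Lemma~\ref{lem_DRB2} from Lemma~\ref{lem_RB1} via a duality argument, in direct analogy with the derivation of Lemma~\ref{lem_DRB1} from Lemma~\ref{lem_RB2}. Fix $v\in L^2_{t,x}$; for each of the two claimed estimates, the left-hand side defines a linear operator $S_v:L^2_{t,x}\to \ell^2_kL^2_\tau$ (namely $S_v u$ is the expression inside the norm). The plan is to compute the Hilbert-space adjoint $S_v^\dagger$, recognize $\widetilde{S_v^\dagger w}$ as precisely the expression appearing on the left-hand side of Lemma~\ref{lem_RB1} evaluated at $(W,v)$ with $\ti W=w$, and then invoke Lemma~\ref{lem_RB1} together with Plancherel.

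Pairing $S_v u$ with $w\in\ell^2_kL^2_\tau$ and using the Parseval identity gives
\eqq{
\langle S_vu,w\rangle =\ttfrac{1}{\la^2}\ttsum_{k,k_1}\iint \LR{2k-k_1}^{1/2}\chf{B_2\cap\De_2^c}\ti u(\tau_1,k_1)\overline{\ti v(\tau_1-\tau,k_1-k)}\,\overline{w(\tau,k)}\,d\tau\,d\tau_1.
}
Matching this with $\langle u,S_v^\dagger w\rangle_{L^2_{t,x}}$, then relabeling the dummy variables $(\tau,k)\leftrightarrow (\tau_1,k_1)$ in the resulting formula, yields
\eqq{
\widetilde{S_v^\dagger w}(\tau,k)=\ttfrac{1}{\la}\ttsum_{k_1}\int \LR{2k_1-k}^{1/2}\chi'(\tau,k,\tau_1,k_1)\,\ti W(\tau_1,k_1)\,\ti v(\tau-\tau_1,k-k_1)\,d\tau_1,
}
where $W\in L^2_{t,x}$ is defined by $\ti W=w$, so that $\norm{W}{L^2_{t,x}}=\norm{w}{\ell^2_kL^2_\tau}$ by Plancherel.

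The key verification is that, after the relabeling, the constraint set $\chi'$ coincides exactly with $\chf{A_1\cap\Ga_1^c}$, with $M_1$ playing the role of $M$. Indeed, the first condition $\LR{\tau+k^2}\lec M$ in $B_2$ (originally on the output variable) becomes $\LR{\tau_1+k_1^2}\lec M$ after the swap, matching the first constraint of $A_1$; the second condition $\LR{(\tau_1-\tau)+(k_1-k)^2}\lec M_2$ becomes $\LR{(\tau-\tau_1)+(k-k_1)^2}\lec M_2$, matching the second constraint of $A_1$; and the exceptional set $\De_2$, given by $|2k-k_1\pm\sqrt{2(-\tau_1-k_1^2/2)}|\le \la^{-1}$, transforms into $|2k_1-k\pm\sqrt{2(-\tau-k^2/2)}|\le \la^{-1}$, which is precisely $\Ga_1$. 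Consequently the right-hand side of the adjoint formula is exactly the bilinear expression in Lemma~\ref{lem_RB1} evaluated at $(W,v)$; its first estimate combined with Plancherel gives $\norm{S_v^\dagger w}{L^2_{t,x}}\lec M^{1/2}M_2^{1/2}\norm{v}{L^2_{t,x}}\norm{w}{\ell^2_kL^2_\tau}$, and taking adjoints back produces the first estimate of Lemma~\ref{lem_DRB2}. The second estimate follows identically upon replacing $\De_2^c,\Ga_1^c$ by $\De_2,\Ga_1$ and invoking the second estimate of Lemma~\ref{lem_RB1}. The only real labor is the bookkeeping for the variable substitution to certify the correspondence $\chi'\leftrightarrow \chf{A_1\cap\Ga_1^c}$; this is purely mechanical and constitutes the main (minor) obstacle.
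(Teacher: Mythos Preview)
Your proposal is correct and follows exactly the same route as the paper, which simply states that the claim is reduced to Lemma~\ref{lem_RB1} by duality. Your explicit bookkeeping---swapping $(\tau,k)\leftrightarrow(\tau_1,k_1)$ to identify $B_2$ with $A_1$ (taking $M_1=M$), $\De_2$ with $\Ga_1$, and the weight $\LR{2k-k_1}^{1/2}$ with $\LR{k_1-(k-k_1)}^{1/2}$---is precisely the verification the paper leaves implicit.
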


\begin{proof}
Again by duality, the claim is reduced to Lemma~\ref{lem_RB1}.
\end{proof}

Lemmas~\ref{lem_RB1}--\ref{lem_DRB2} can be regarded as the extension of the following nonperiodic modified bilinear $L^4$ estimates.
We note that the above estimates for $2\pi\la$ periodic functions contain $\la$ as a parameter and formally converge to the corresponding estimates stated below as $\la \to \I$.
The argument in the proof of Lemmas~\ref{lem_RB1}--\ref{lem_DRB2} can be naturally adjusted to the nonperiodic case, so we will omit the proof.
\begin{lem}\label{lem_RB_R}
The sets $A_1$, $A_2$, $B_1$, and $B_2$ are the same as in Lemmas~\ref{lem_RB1}--\ref{lem_DRB2}.
Then, we have the following estimates for spacetime functions $u$, $v$ on $\R \times \R$.
\eqq{&\norm{\int _{\R ^2}\LR{\xi _1-(\xi -\xi _1)}^{1/2}\chf{A_1}(\tau ,\xi ,\tau _1,\xi _1)\ti{u}(\tau _1,\xi _1)\ti{v}(\tau -\tau _1,\xi -\xi _1)\,d\tau _1d\xi _1}{L^2_{\tau ,\xi}}\\
+~&\norm{|\xi |^{1/2}\int _{\R ^2}\chf{A_2}(\tau ,\xi ,\tau _1,\xi _1)\ti{u}(\tau _1,\xi _1)\bbar{\ti{v}(\tau _1-\tau ,\xi _1-\xi )}\,d\tau _1d\xi _1}{L^2_{\tau ,\xi}}\\
&\hx \hx \lec M_1^{1/2}M_2^{1/2}\norm{u}{L^2_{t,x}}\norm{v}{L^2_{t,x}},}
\eqq{&\norm{\int _{\R ^2}|\xi _1|^{1/2}\chf{B_1}(\tau ,\xi ,\tau _1,\xi _1)\ti{u}(\tau _1,\xi _1)\ti{v}(\tau -\tau _1,\xi -\xi _1)\,d\tau _1d\xi _1}{L^2_{\tau ,\xi}}\\
+~&\norm{\int _{\R ^2}\LR{\xi -(\xi _1-\xi )}^{1/2}\chf{B_2}(\tau ,\xi ,\tau _1,\xi _1)\ti{u}(\tau _1,\xi _1)\bbar{\ti{v}(\tau _1-\tau ,\xi _1-\xi )}\,d\tau _1d\xi _1}{L^2_{\tau ,\xi}}\\
&\hx \hx \lec M^{1/2}M_2^{1/2}\norm{u}{L^2_{t,x}}\norm{v}{L^2_{t,x}}.}
\end{lem}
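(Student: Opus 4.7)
The plan is to mirror the arguments of the periodic cases (Lemmas~\ref{lem_RB1}--\ref{lem_DRB2}) in the continuous setting, exploiting the fact that the ``exceptional'' sets $\Ga_i,\De_i$, which in the periodic proofs are defined by an $O(\la^{-1})$-thickening of the resonance locus so as to catch lattice points lying near resonance, degenerate to a Lebesgue-null set as $\la\to\I$. Consequently the two-part periodic estimates collapse into a single bound on $\R\times\R$, with no need for a separate treatment of a small exceptional region. This is why the nonperiodic statement contains only one estimate per case rather than two.

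Concretely, by Plancherel's identity together with Cauchy--Schwarz in the convolution variable, each of the four claims reduces to a pointwise $L^\I_{(\tau,\xi)}$ bound on the $L^1_{(\tau_1,\xi_1)}$-integral of the weighted characteristic function of $A_i$ or $B_i$, exactly as at the start of the proof of Lemma~\ref{lem_L4}. For the $uv$ estimate on $A_1$ I would use the identity
\[
 (\tau_1+\xi_1^2)+\bigl(\tau-\tau_1+(\xi-\xi_1)^2\bigr)=\tau+\tfrac{\xi^2}{2}+\tfrac{1}{2}(\xi-2\xi_1)^2,
\]
which is of size $O(\max\{M_1,M_2\})$ on $A_1$; for the $u\bar v$ estimate on $A_2$ the analogous identity
\[
 (\tau_1+\xi_1^2)-\bigl(\tau_1-\tau+(\xi_1-\xi)^2\bigr)=\tau-\xi^2+2\xi\xi_1,
\]
of the same size on $A_2$.

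From here the case analysis follows the periodic proofs line by line. In the ``dominant-modulation'' regime where $\max\{M_1,M_2\}\gec|\tau+\xi^2/2|$ (respectively $\gec|\tau-\xi^2|/|\xi|$), the weight $|\xi_1-(\xi-\xi_1)|$ or $|\xi|^{1/2}$ is directly controlled by $\max\{M_1,M_2\}^{1/2}$. In the opposite regime the constraint forces $|\xi-2\xi_1|\sim|\tau+\xi^2/2|^{1/2}$ (for the $A_1$ case) or pins $\xi_1$ near a single value (for the $A_2$ case); the Lebesgue measure of the admissible set in $\xi_1$ is then $O(\max\{M_1,M_2\}/|\tau+\xi^2/2|^{1/2})$ or $O(\max\{M_1,M_2\}/|\xi|)$, respectively. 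After integrating $\tau_1$ over an interval of length $O(\min\{M_1,M_2\})$ and multiplying by the squared weight, the pointwise bound is $O(M_1M_2)$ in every case; taking square roots yields the claimed factor $M_1^{1/2}M_2^{1/2}$.

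The remaining two estimates, involving $B_1$ and $B_2$, will follow by duality from the $A_2$ and $A_1$ estimates respectively, in exactly the manner recorded in the proofs of Lemmas~\ref{lem_DRB1} and \ref{lem_DRB2}. I do not anticipate any real obstacle: the only novelty relative to the periodic arguments is the replacement of a lattice-counting step by a one-dimensional Lebesgue measure computation, and the disappearance of the $\la^{-1}$-thickened exceptional set on $\R$ removes the need to separate ``smooth'' and ``exceptional'' contributions. The main item requiring care is simply matching the correct pointwise measure bound to the correct weight in each of the four cases.
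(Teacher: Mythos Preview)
Your proposal is correct and matches the paper's own treatment: the paper omits the proof, stating only that the periodic arguments of Lemmas~\ref{lem_RB1}--\ref{lem_DRB2} ``can be naturally adjusted to the nonperiodic case,'' and notes that the periodic estimates formally converge to these as $\la\to\I$. Your outline---reducing by Cauchy--Schwarz to the same pointwise measure bounds, replacing lattice counting by Lebesgue measure, and observing that the $\la^{-1}$-thickened exceptional sets $\Ga_i,\De_i$ become null so no separate treatment is needed---is exactly this adjustment, with the $B_1,B_2$ cases handled by duality as in Lemmas~\ref{lem_DRB1}--\ref{lem_DRB2}.
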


%%%%%%%%%%%%%%%%%%
%%%%%%%%%%%%%%%%%%
%%%%%%%%%%%%%%%%%%

\bigskip
\section{Bilinear estimate for $s>-\frac{1}{2}$}\label{sec_be+}

In the case of $s>-\frac{1}{2}$, Proposition~\ref{prop_be} can be established by the H\"older, the Young inequalities and Bourgain's $L^4$ estimate (Lemma~\ref{lem_L4}).
Hence, in the most part of the proof there is no difference between the periodic and the nonperiodic cases.
We will concentrate on the case of $\Bo{T}$, and the same argument gives another proof of LWP on $\R$ obtained in \cite{KT10} (see also Remark~\ref{rem_diff} below).

\begin{proof}[Proof of Proposition~\ref{prop_be} for $s>-\frac{1}{2}$]
We show only the estimate for $u\bar{v}$; the other cases are treated similarly, as to be mentioned at the last part of the proof.

For a set $\Omega \subset (\R \times \Zl )^2$, define a bilinear operator $(u,v)\mapsto B_\Omega (u,v)$ by
\eqq{\ti{B_\Omega (u,v)}(\tau ,k)&:=\ttfrac{1}{2\pi}\ttfrac{1}{\la}\ttsum _{k_1\in \Zl}\displaystyle{\int _{\R}}\chf{\Omega}(\tau ,k,\tau _1,k_1)\ti{u}(\tau _1,k_1)\ti{\bar{v}}(\tau -\tau _1,k-k_1)\,d\tau _1\\
&~=\ttfrac{1}{2\pi}\ttfrac{1}{\la}\ttsum _{k_1\in \Zl}\displaystyle{\int _{\R}}\chf{\Omega}(\tau ,k,\tau _1,k_1)\ti{u}(\tau _1,k_1)\bbar{\ti{v}(\tau _1-\tau ,k_1-k)}\,d\tau _1.}
Note that $B_{(\R \times \Zl )^2}(u,v)=u\bar{v}$.

First of all, we assume $\ti{u},\ti{v}\ge 0$ without loss of generality.
Since $\omega _\la ^2$ acts as $P_{\shugo{k\neq 0}}$, we can decompose the domain of integral as $\ti{\omega _\la ^2(u\bar{v})}\le \sum\limits _{j=0}^4\ti{B_{\Omega _j}(u,v)}$,
\eqq{\Omega _0&:=\Shugo{(\tau ,k,\tau _1,k_1)\in (\R \times \Zl )^2}{|k_1|\lec 1~or~|k_1-k|\lec 1},\\
\Omega _1&:=\Shugo{(\tau ,k,\tau _1,k_1)\in (\R \times \Zl )^2}{1\lec |k_1|\lec |k_1-k|\sim |k|},\\
\Omega _2&:=\Shugo{(\tau ,k,\tau _1,k_1)\in (\R \times \Zl )^2}{1\lec |k_1-k|\lec |k_1|\sim |k|},\\
\Omega _3&:=\Shugo{(\tau ,k,\tau _1,k_1)\in (\R \times \Zl )^2}{1\lec |k|\ll |k_1|\sim |k_1-k|},\\
\Omega _4&:=\Shugo{(\tau ,k,\tau _1,k_1)\in (\R \times \Zl )^2}{\la ^{-1}\le |k|\le 1\ll |k_1|\sim |k_1-k|}.
}
In the following, we will show that
\eqq{\ttsum _{j=0}^3\norm{\La ^{-1}B_{\Omega _j}(u,v)}{W^s}&\lec \norm{u}{W^s}\norm{v}{W^s},\\
\norm{\La ^{-1}B_{\Omega _4}(u,v)}{W^s}&\lec C_s(\la )\norm{u}{W^s}\norm{v}{W^s}.}

\smallskip
\midashi{Estimate in $\Omega _0$}

In the region $|k_1|\lec 1$, for example, we note that $\LR{k}\sim \LR{k_1-k}$ and apply the Young inequality.
Also note that we may estimate the $X^{s,0}$ norm of $B_{\Omega _0}(u,v)$ with the aid of Lemma~\ref{lem_embedding}.
We have
\eqq{&\norm{B_{\Omega _0}(u,v)}{X^{s,0}}\sim \norm{\F _{t,x}B_{\Omega _0}(u,J^sv)}{\ell ^2L^2}\\
&\hx \lec \norm{\F _{t,x}P_{\shugo{|k|\lec 1}}u}{\ell ^1L^2}\norm{\ti{J^sv}}{\ell ^2L^1}\lec \norm{u}{X^{s,0}}\norm{v}{Y^s}.}
The case of $|k_1-k|\lec 1$ is treated in the same manner.

\smallskip
For the remaining cases, the algebraic relation
\eq{RE1}{L_1&:=\max \shugo{|\tau +k^2|,\,|\tau _1+k_1^2|,\,|(\tau _1-\tau )+(k_1 -k)^2|}\\
&~\ge \big| (\tau +k^2)-(\tau _1+k_1^2)+((\tau _1-\tau )+(k_1 -k)^2)\big| /3=2|k||k_1-k|/3}
will play an essential role.

\smallskip
\midashi{Estimate in $\Omega _2$}

Recall that $\LR{k}\sim \LR{k_1}$ in this region.
Consider three subregions
\eqq{\Omega _{21}&:=\Shugo{(\tau ,k,\tau _1,k_1)\in \Omega _2}{|\tau _1+k_1^2|\gec |k_1|},\\
\Omega _{22}&:=\Shugo{(\tau ,k,\tau _1,k_1)\in \Omega _2}{|(\tau _1-\tau )+(k_1 -k)^2|\gec |k_1-k|},\\
\Omega _{23}&:=\Omega _2\setminus (\Omega _{21}\cup \Omega_{22})}
separately.
In $\Omega _{21}$, we may measure $u$ in $X^{s+1,0}$.
Following the argument for $\Omega _0$ we obtain the upper bound
\eqq{\norm{B_{\Omega_{21}}(u,v)}{X^{s,0}}\lec \norm{\ti{u}}{\ell ^1L^2}\norm{v}{Y^s}.}
Since $s+1>\frac{1}{2}$, we have $\tnorm{\ti{u}}{\ell^1L^2}\lec \tnorm{u}{X^{s+1,0}}$ by the Cauchy-Schwarz inequality in $k$.
The estimate in $\Omega _{22}$ is the same.

In $\Omega _{23}$, the relation \eqref{RE1} implies that $\LR{\tau +k^2}\sim L_1\sim \LR{k}\LR{k_1-k}~(\lec \LR{k}^2)$, so we have
\eqq{\F \La ^{-1}B_{\Omega _{23}}(u,v)\lec \F B_{\Omega _{23}}(J^{-1}u,J^{-1}v).}
Also, we may estimate the $X^{s+1,0}$ norm of $\La ^{-1}B_{\Omega _{23}}(u,v)$ by the $X^{s,1}$ norm of $u$ and $v$.
We use Lemma~\ref{lem_L4} to obtain
\eqq{\norm{J^{-1}u\bbar{J^{-1}v}}{X^{s+1,0}}\sim \norm{J^su\bbar{J^{-1}v}}{L^2_{t,x}}\lec \norm{u}{X^{s,3/8}}\norm{v}{X^{-1,3/8}},}
which is an appropriate bound.

% Also, we may estimate the $X^{s+1,0}\cap Y^s$ norm of $\La ^{-1}B_{\Omega _{23}}(u,v)$ by the $X^{s,1}$ or the $Y^s$ norm of $u$ and $v$.
% (
% We first use Lemma~\ref{lem_L4} to obtain
% \eqq{\norm{J^{-1}u\bbar{J^{-1}v}}{X^{s+1,0}}\sim \norm{J^su\bbar{J^{-1}v}}{L^2_{t,x}}\lec \norm{u}{X^{s,3/8}}\norm{v}{X^{-1,3/8}},}
% which is an appropriate bound.
% Next, we apply the Young inequality and the H\"older inequality to estimate the $Y^s$ norm as
% \eqq{\norm{J^{-1}u\bbar{J^{-1}v}}{Y^s}\lec \norm{\ti{J^{-1+s}u}}{\ell ^1L^1}\norm{\ti{J^{-1}v}}{\ell ^2L^1}\lec \norm{u}{Y^s}\norm{v}{Y^{-1}}.}

\smallskip
\midashi{Estimate in $\Omega _1$}

The argument for this case is parallel to that for $\Omega _2$.

\smallskip
\midashi{Estimate in $\Omega _3$}

Recall that $\LR{k_1}\sim \LR{k_1-k}$.
Consider three subregions
\eqq{\Omega _{31}&:=\Shugo{(\tau ,k,\tau _1,k_1)\in \Omega _3}{|\tau _1+k_1^2|\gec |k_1|},\\
\Omega _{32}&:=\Shugo{(\tau ,k,\tau _1,k_1)\in \Omega _3}{|(\tau _1-\tau )+(k_1 -k)^2|\gec |k_1-k|},\\
\Omega _{33}&:=\Omega _3\setminus (\Omega _{31}\cup \Omega_{32})}
separately.

See the estimate for $\Omega _{31}$ first.
We may measure $u$ in $X^{s+1,0}$.
Since $0>s>-\frac{1}{2}$, we can choose $1<q<2<p<\I$ such that
\eqq{-s>\ttfrac{1}{2}-\ttfrac{1}{p},\qquad 1+\ttfrac{1}{p}=\ttfrac{1}{q}+\ttfrac{1}{2},\qquad 2s+1>\ttfrac{1}{q}-\ttfrac{1}{2}.}
For such $(p,q)$, the H\"older inequality, followed by the Young and again the H\"older, implies that
\eqq{&\norm{B_{\Omega_{31}}(u,v)}{X^{s,0}}\lec \norm{\F B_{\Omega_{31}}(J^{-s}u,J^sv)}{\ell ^pL^2}\lec \norm{\F J^{-2s-1}J^{s+1}u}{\ell ^qL^2}\norm{\ti{J^sv}}{\ell ^2L^1}\\
&\hx \lec \norm{u}{X^{s+1,0}}\norm{v}{Y^s}.}
The case of $\Omega _{32}$ is almost identical.

Now, consider $\Omega _{33}$, where we again have $\LR{\tau +k^2}\sim \LR{k}\LR{k_1-k}$ from \eqref{RE1}.
We may measure $u$ and $v$ in $X^{s,1}$ or $Y^s$.
Using Lemma~\ref{lem_L4} we obtain
\eqq{\norm{B_{\Omega_{33}}(u,v)}{X^{s+1,-1}}\lec \norm{J^{-1/2}u\bbar{J^{-1/2}v}}{X^{s,0}}\lec \norm{u}{X^{-1/2,3/8}}\norm{v}{X^{-1/2,3/8}},}
while an application of the H\"older inequality implies
\eqq{\norm{\La ^{-1}B_{\Omega _{33}}(u,v)}{Y^s}\sim \norm{B_{\Omega _{33}}(J^{-1/2}u,J^{-1/2}v)}{Y^{s-1}}\lec \norm{\F (J^{-1/2}u\bbar{J^{-1/2}v})}{\ell ^\I L^1},}
which is evaluated by $\tnorm{u}{Y^{-1/2}}\tnorm{v}{Y^{-1/2}}$ from the Young inequality.

\smallskip
\midashi{Estimate in $\Omega _4$}

This is the worst case where the loss of $C_s(\la )$ occurs.
We claim that
\eq{claim_Omega4}{\norm{\La ^{-1}\proj{|k|\sim N}B_{\Omega _4}(u,v)}{W^s}\lec N^{2s+1/2}\norm{u}{W^s}\norm{v}{W^s}}
for dyadic $N\in [\la ^{-1},1]$.
The desired estimate will follow by squaring \eqref{claim_Omega4} and summing over $N$.%, because we have assumed $s<-1/4$ and $\sum |a|^{4s}$ is bounded uniformly in $\la$.

Fix $N$.
We imitate the argument for $\Omega _3$ and divide $\Omega _4$ into five subregions
\eqq{\Omega _{41}&:=\Shugo{(\tau ,k,\tau _1,k_1)\in \Omega _4}{|\tau _1+k_1^2|\gec |k_1|},\\
\Omega _{41}'&:=\Shugo{(\tau ,k,\tau _1,k_1)\in \Omega _4}{|k_1|\gec |\tau _1+k_1^2|\gec N|k_1|},\\
\Omega _{42}&:=\Shugo{(\tau ,k,\tau _1,k_1)\in \Omega _4}{|(\tau _1-\tau )+(k_1 -k)^2|\gec |k_1-k|},\\
\Omega _{42}'&:=\Shugo{(\tau ,k,\tau _1,k_1)\in \Omega _4}{|k_1-k|\gec |(\tau _1-\tau )+(k_1 -k)^2|\gec N|k_1-k|},\\
\Omega _{43}&:=\Omega _4\setminus (\Omega _{41}\cup \Omega_{41}'\cup \Omega _{42}\cup \Omega_{42}').}

In the region $\Omega _{41}$ or $\Omega _{41}'$, we first use the H\"older inequality in $k$ to have
\eqq{&\norm{\proj{|k|\sim N}(u\bar{v})}{X^{s,0}}\sim \norm{\proj{|k|\sim N}(J^{-s}u\bbar{J^sv})}{X^{0,0}}\lec N^{1/2}\norm{\F \proj{|k|\sim N}(J^{-s}u\bbar{J^sv})}{\ell ^\I L^2},}
and then apply the Young to obtain the bound $N^{1/2}\tnorm{u}{X^{-s,0}}\tnorm{v}{Y^s}$.
Since $N^{2s}\ge 1$ and $s+1>-s$, this is sufficient for the estimate in $\Omega _{41}$ where we may measure $u$ in $X^{s+1,0}$.
In $\Omega _{41}'$, where $u$ should be evaluated in $X^{s,1}$, we have
\eqq{\norm{P_{\shugo{\LR{\tau +k^2}\gec N\LR{k}}}u}{X^{-s,0}}\lec N^{2s}\norm{u}{X^{s,-2s}},}
and conclude \eqref{claim_Omega4}.
We employ the same argument for $\Omega _{42}$ and $\Omega_{42}'$.

In $\Omega _{43}$ we take a similar way, but now the relation \eqref{RE1} implies that $\LR{\tau +k^2}\ge |\tau +k^2|\sim N|k_1|\sim N\LR{k_1}$.
The estimate of the $X^{s+1,0}$ norm is
\eqq{&\norm{\proj{|k|\sim N}B_{\Omega _{43}}(u,v)}{X^{s+1,-1}}\lec N^{1/2}\norm{\F \La ^{-1}\proj{|k|\sim N}B_{\Omega _{43}}(u,v)}{\ell ^\I L^2}\\
&\hx \lec N^{1/2}N^{2s}\norm{\F \La ^{-1-2s}(J^su\bbar{J^sv})}{\ell ^\I L^2}\lec N^{1/2}N^{2s}\norm{u}{X^{s,0}}\norm{v}{Y^s}.}
The $Y^s$ norm can be treated similarly and estimated by $N^{1/2}N^{2s}\tnorm{u}{Y^s}\tnorm{v}{Y^s}$, therefore \eqref{claim_Omega4} also follows in this case.

\smallskip
All the above argument works in the case of $uv$ and $\bar{u}\bar{v}$ with some trivial modification.
We use the algebraic relation
\eq{RE2}{L_2&:=\max \shugo{|\tau +k^2|,\,|\tau _1+k_1^2|,\,|(\tau -\tau _1)+(k-k_1)^2|}\\
&~\ge \big| (\tau +k^2)-(\tau _1+k_1^2)-((\tau -\tau _1)+(k-k_1)^2)\big| /3=2|k_1||k-k_1|/3}
for the $uv$ case and
\eq{RE3}{L_3&:=\max \shugo{|\tau +k^2|,\,|-\tau _1+k_1^2|,\,|(\tau _1-\tau )+(k_1 -k)^2|}\\
&~\ge \big| (\tau +k^2)+(-\tau _1+k_1^2)+((\tau _1-\tau )+(k_1 -k)^2)\big| /3=(k^2+k_1^2+(k_1-k)^2)/3}
for the $\bar{u}\bar{v}$ case instead of \eqref{RE1}.
The bilinear operator $B_\Omega (u,v)$ is also replaced by
\eqq{\ti{B'_\Omega (u,v)}(\tau ,k)&:=\ttfrac{1}{2\pi}\ttfrac{1}{\la}\ttsum _{k_1\in \Zl}\displaystyle{\int _{\R}}\chf{\Omega}(\tau ,k,\tau _1,k_1)\ti{u}(\tau _1,k_1)\ti{v}(\tau -\tau _1,k-k_1)\,d\tau _1}
for $uv$ and
\eqq{\ti{B''_\Omega (u,v)}(\tau ,k)&:=\ttfrac{1}{2\pi}\ttfrac{1}{\la}\ttsum _{k_1\in \Zl}\displaystyle{\int _{\R}}\chf{\Omega}(\tau ,k,\tau _1,k_1)\ti{\bar{u}}(\tau _1,k_1)\ti{\bar{v}}(\tau -\tau _1,k-k_1)\,d\tau _1\\
&~=\ttfrac{1}{2\pi}\ttfrac{1}{\la}\ttsum _{k_1\in \Zl}\displaystyle{\int _{\R}}\chf{\Omega}(\tau ,k,\tau _1,k_1)\bbar{\ti{u}(-\tau _1,-k_1)}\bbar{\ti{v}(\tau _1-\tau ,k_1-k)}\,d\tau _1}
for $\bar{u}\bar{v}$.
In fact, situation is much better than the case of $u\bar{v}$ and there is no loss in $\la$ from the region $|k|\le 1$ (there is no need for separating $\Omega _3$ and $\Omega _4$).
\end{proof}

\begin{rem}\label{rem_diff}
Concerning the bilinear estimate for the nonperiodic case, the only difference from the above proof appears in the estimate inside $\Omega _4$; we also have to consider the case $|\xi |< \la ^{-1}$.
We still have \eqref{claim_Omega4} for dyadic numbers $N<\la ^{-1}$.
Then, noting that $\omega _\la ^2\sim \la ^2N^2$ for frequencies $|\xi |\sim N<\la ^{-1}$, we have
\eqq{\norm{\La ^{-1}\proj{|k|\sim N}\omega _\la ^2B_{\Omega _4}(u,v)}{W^s}\lec \la ^2N^{2s+5/2}\norm{u}{W^s}\norm{v}{W^s}.}
Since $2s+\frac{5}{2}>0$, we can sum up over $0<N<\la ^{-1}$ and reach the conclusion.
\end{rem}

%%%%%%%%%%%%%%%%%%%%%%%%%%%%%%
%%%%%%%%%%%%%%%%%%%%%%%%%%%%%%
%%%%%%%%%%%%%%%%%%%%%%%%%%%%%%

\bigskip
\section{Bilinear estimate for $s=-\frac{1}{2}$}\label{sec_be0}

If we try to apply the above proof of Proposition~\ref{prop_be} to the case of $s=-\frac{1}{2}$, the logarithmic divergences will occur in several parts of the proof.
To overcome these divergences, we shall exploit modified bilinear $L^4$ estimates (Lemmas~\ref{lem_RB1}--\ref{lem_DRB2}) which provide $\frac{1}{2}$ gain of regularity.

Again, we will focus on the case of $\Bo{T}_\la$; the nonperiodic case is easier to treat, and it suffices to use Lemma~\ref{lem_RB_R} instead of Lemmas~\ref{lem_RB1}--\ref{lem_DRB2} and then modify the estimate in low frequency (see Remark~\ref{rem_diff}).

\begin{proof}[Proof of Proposition~\ref{prop_be} for $s=-\frac{1}{2}$]
We first establish the bilinear estimate for $u\bar{v}$ by modifying the proof for the case of $s>-\frac{1}{2}$.
Notations are the same as before.

\smallskip
\midashi{Estimate in $\Omega _0$}

The previous proof works also in the present case.

\smallskip
\midashi{Estimate in $\Omega _2$}

Recall the previous division
\eqq{\Omega _{21}&:=\Shugo{(\tau ,k,\tau _1,k_1)\in \Omega _2}{|\tau _1+k_1^2|\gec |k_1|},\\
\Omega _{22}&:=\Shugo{(\tau ,k,\tau _1,k_1)\in \Omega _2}{|(\tau _1-\tau )+(k_1 -k)^2|\gec |k_1-k|},\\
\Omega _{23}&:=\Omega _2\setminus (\Omega _{21}\cup \Omega_{22}).}

We first see that the estimate in $\Omega _{23}$ is completely the same as before.
Observe that $\LR{k}\lec \LR{\tau +k^2}\sim \LR{k}\LR{k_1-k}\lec \LR{k}^2$ in this region and $u\bar{v}$ is estimated in $X^{1/2,-1}$.

In $\Omega _{21}$, the same proof is not applicable because of the criticality.
However, since $\LR{k}\sim \LR{k_1}$ in this case, it suffices to show
\eqq{\norm{B_{\Omega _{21}}(u_N,v)}{X^{-1/2,0}}\lec \norm{u_N}{X^{1/2,0}}\norm{v}{Y^{-1/2}}}
for each dyadic $N\ge 1$, where $u_N:=P_{\shugo{\LR{k}\sim N}}u$.
In fact, if we show this, then it follows that
\eqq{&\norm{B_{\Omega _{21}}(u,v)}{X^{-1/2,0}}^2\sim \ttsum _{N\ge 1}\norm{P_{\shugo{\LR{k}\sim N}}B_{\Omega _{21}}(u,v)}{X^{-1/2,0}}^2\\
&\hx \lec \ttsum _{N\ge 1}\norm{B_{\Omega _{21}}(u_N,v)}{X^{-1/2,0}}^2\lec \ttsum _{N\ge 1}\norm{u_N}{X^{1/2,0}}^2\cdot \norm{v}{Y^{-1/2}}^2\sim \norm{u}{X^{1/2,0}}^2\norm{v}{Y^{-1/2}}^2,}
which is the desired estimate.
Since the Cauchy-Schwarz inequality implies
\eqq{\tnorm{\ti{u_N}}{\ell ^1L^2}\lec N^{1/2}\tnorm{\ti{u_N}}{\ell ^2L^2}\sim \tnorm{u_N}{X^{1/2,0}},}
the argument for $s>-\frac{1}{2}$ is now applicable.

The case of $\Omega _{22}$ needs a little more attention, since we cannot decompose $v$ into dyadic pieces as above.
We now assume by the previous case that $\LR{\tau _1+k_1^2}\ll \LR{k_1}$, thus $u$ should be measured in $X^{-1/2,1}$.
Consider the following three subsets of $\Omega _{22}$:
\eqq{\Omega _{22a}&:=\Omega _{22}\cap \shugo{\LR{\tau _1+k_1^2}\ll \LR{k_1},\, \LR{k_1-k}\lec \LR{\tau _1-\tau +(k_1-k)^2}\ll \LR{k}\LR{k_1-k}},\\
\Omega _{22b}&:=\Omega _{22}\cap \shugo{\LR{\tau _1+k_1^2}\ll \LR{k_1},\, \LR{\tau _1-\tau +(k_1-k)^2}\sim \LR{k}\LR{k_1-k}},\\
\Omega _{22c}&:=\Omega _{22}\cap \shugo{\LR{\tau _1+k_1^2}\ll \LR{k_1},\, \LR{\tau _1-\tau +(k_1-k)^2}\gg \LR{k}\LR{k_1-k}}.
}
Then, the $W^{-1/2}$ norm of $v$ is bounded from below by $\tnorm{v}{X^{1/2,0}}$ in $\Omega _{22a}$ and comparable to
\eqq{\ttsum _{M_2\ge 1}\norm{v_{M_2}}{X^{1/2,0}}+\norm{v}{Y^{-1/2}}}
in $\Omega _{22b}\cup \Omega _{22c}$, where $M_2$ is dyadic and $v_{M_2}:=P_{\shugo{\LR{\tau +k^2}\sim M_2}}v$.

The estimate in $\Omega _{22a}$ is similar to that for $\Omega _{23}$.
In fact, it also holds that $\LR{\tau +k^2}\sim \LR{k}\LR{k_1-k}$.
Then, the $X^{1/2,-1}$ norm of $B_{\Omega _{22a}}(u,v)$ is bounded by $\tnorm{J^{-1/2}u\bbar{J^{-1}v}}{L^2_{t,x}}$ similarly, which is in term estimated with the Young inequality as
\eqq{\norm{\ti{J^{-1/2}u}}{\ell ^2L^1}\norm{\ti{J^{-1}v}}{\ell ^1L^2}\lec \norm{u}{Y^{-1/2}}\norm{v}{X^{-1/2+\e ,0}}}
for any $\e >0$.
% The $Y^{-1/2}$ norm is treated in the same way as before.
% Moreover, the same argument is applicable to the estimate of the $Y^{-1/2}$ norm in $\Omega _{22b}\cup \Omega _{22c}$, because we may restrict $\ti{u\bar{v}}$ onto the region $\LR{\tau +k^2}\gec \LR{k}^2$.

For $\Omega _{22b}$, we will use Lemma~\ref{lem_RB2}, one of modified versions of Lemma~\ref{lem_L4} stated in the beginning of this section.
%Note that $\LR{\tau _1-\tau +(k_1-k)^2}\sim \LR{k}\LR{k_1-k}$ holds in this region.
It suffices to evaluate the $X^{-1/2,0}$ norm of $B_{\Omega _{22b}}(u,v)$ in the following way:
\eqq{\norm{B_{\Omega _{22b}}(u_N,v)}{X^{-1/2,0}}\lec \norm{u_N}{X^{-1/2,1}}\ttsum _{M_2\ge 1}\norm{v_{M_2}}{X^{1/2,0}}}
for any dyadic $N\ge 1$.

In the region $\Omega _{22b}\cap \Ga _2^c$, we decompose $u$ and $v$ into dyadic frequency pieces in $\tau +k^2$ and apply Lemma~\ref{lem_RB2} to each one, obtaining
\eqq{&\norm{B_{\Omega _{22b}\cap \Ga _2^c}(u_N,v)}{X^{-1/2,0}}\lec N^{-1/2}\ttsum _{M_1\ge 1}\ttsum _{M_2\ge 1}\norm{B_{\Omega _{22b}\cap \Ga _2^c}(P_{\shugo{\LR{\tau +k^2}\sim M_1}}u_N,v_{M_2})}{L^2_{t,x}}\\
&\hx \lec N^{-1}\ttsum _{M_1\ge 1}\ttsum _{M_2\ge 1} M_1^{1/2}M_2^{1/2}\norm{P_{\shugo{\LR{\tau +k^2}\sim M_1}}u_N}{L^2_{t,x}}\norm{P_{\shugo{N\LR{k}\sim M_2}}v_{M_2}}{L^2_{t,x}}%\\
%&\hx \lec \ttsum _{M_1\ge 1}N^{-1/2}M_1^{1/2}\norm{P_{\LR{\tau +k^2}\sim M_1}u_N}{L^2_{t,x}}\norm{v}{L^2_{t,x}}
.}
From the Cauchy-Schwarz inequality, we see that
\eqq{&N^{-1/2}\ttsum _{M_1\ge 1}M_1^{1/2}\norm{P_{\shugo{\LR{\tau +k^2}\sim M_1}}u_N}{L^2_{t,x}}\\
&\hx \lec \Big( \ttsum _{M_1\ge 1}M_1^{-1}\Big) ^{1/2}\Big( \ttsum _{M_1\ge 1}N^{-1}M_1^2\norm{P_{\shugo{\LR{\tau +k^2}\sim M_1}}u_N}{L^2_{t,x}}^2\Big) ^{1/2}\sim \norm{u_N}{X^{-1/2,1}},}
while we have
\eqq{N^{-1/2}\ttsum _{M_2\ge 1}M_2^{1/2}\norm{P_{\shugo{N\LR{k}\sim M_2}}v_{M_2}}{L^2_{t,x}}\lec \ttsum _{M_2\ge 1}\norm{v_{M_2}}{X^{1/2,0}},}
which concludes the estimate.

In $\Omega _{22b}\cap \Ga _2$, we decompose only $u$ and apply Lemma~\ref{lem_RB2}, then
\eqq{&\norm{B_{\Omega _{22b}\cap \Ga _2}(u_N,v)}{X^{-1/2,0}}\lec N^{-1/2}\ttsum _{M_1\ge 1}\norm{B_{\Omega _{22b}\cap \Ga _2}(P_{\shugo{\LR{\tau +k^2}\sim M_1}}u_N,v)}{L^2_{t,x}}\\
&\hx \lec \la ^{-1/2}N^{-1/2}\ttsum _{M_1\ge 1}M_1^{1/2}\norm{P_{\shugo{\LR{\tau +k^2}\sim M_1}}u_N}{L^2_{t,x}}\cdot \norm{v}{L^2_{t,x}}\lec \norm{u_N}{X^{-1/2,1}}\norm{v}{X^{1/2,0}},}
as desired.
(Since $\ti{v}$ is restricted to the region $\LR{\tau +k^2}\sim N\LR{k}$, we can apply the second estimate of Lemma~\ref{lem_RB2} with $M_2=N^2$.
However, we see from the proof of Lemma~\ref{lem_RB2} that such restriction is actually not needed.)% and $v$ can be an arbitrary $L^2_{t,x}$ function.)

\begin{rem}
Since $\LR{\tau _1-\tau +(k_1-k)^2}\sim L_1$ is the biggest in $\Omega _{22b}$, it seems natural to apply Lemma~\ref{lem_DRB1} (with $u$ and $v$ replaced by $\bar{v}$ and $u$, respectively) rather than Lemma~\ref{lem_RB2}.
However, Lemma~\ref{lem_DRB1} will provide only $|k_1-k|^{1/2}$ gain of regularity, which is not enough in this case.
We have used Lemma~\ref{lem_RB2} to obtain $N^{1/2}$ gain of regularity, but then we need a little stronger structure than the simple $X^{1/2,0}$ in order to sum up the dyadic frequency pieces in the estimate without any loss of regularity.
It is only here that such `$\ell ^1$-Besov' structure in the $W^{-1/2}$ norm is essentially needed.
\end{rem}

Finally, we treat $\Omega _{22c}$.
It holds from \eqref{RE1} that $\LR{\tau _1-\tau +(k_1-k)^2}\sim \LR{\tau +k^2} \gg \LR{k}\LR{k_1-k}$, which implies
\eqq{\F \La ^{-1}B_{\Omega _{22c}}(u,v)\ll \F B_{\Omega _{22c}}(J^{-1}u,J^{-1}v),}
similarly to the case of $\Omega _{23}$.
It is then enough to evaluate
\eqq{&\ttsum _{M\ge 1}\norm{\proj{\LR{\tau +k^2}\sim M}B_{\Omega _{22c}}(u,v)}{X^{1/2,-1}}+\norm{\La ^{-1}B_{\Omega _{22c}}(u,v)}{Y^{-1/2}}\\
&\hx \lec \ttsum _{M\ge 1}\norm{B_{\Omega _{22c}}(J^{-1/2}u,J^{-1}\proj{\LR{\tau +k^2}\sim M}v)}{X^{0,0}}+\norm{\F B_{\Omega _{22c}}(J^{-3/2}u,J^{-1}v)}{\ell ^2L^1}.}
Applying the Young and the H\"older inequalities to each term, we have the bound
\eqq{&\norm{\ti{J^{-1/2}u}}{\ell ^2L^1}\ttsum _{M\ge 1}\norm{\F J^{-1}\proj{\LR{\tau +k^2}\sim M}v}{\ell ^1L^2}+\norm{\ti{J^{-3/2}u}}{\ell ^1L^1}\norm{\ti{J^{-1}v}}{\ell ^2L^1}\\
&\hx \lec \norm{u}{Y^{-1/2}}\ttsum _{M\ge 1}\norm{\proj{\LR{\tau +k^2}\sim M}v}{X^{-1/2+\e ,0}}+\norm{u}{Y^{-1+\e}}\norm{v}{Y^{-1}}}
for any $\e >0$, which easily implies the claim.

\smallskip
\midashi{Estimate in $\Omega _1$}

% Compared to $\Omega _2$, this case is treated similarly but more easily.
Now $|k_1-k|$ is comparable to $|k|$, and $|k_1|$ can be very small.
If we consider a similar decomposition
\eqq{\Omega _{11}&:=\Shugo{(\tau ,k,\tau _1,k_1)\in \Omega _1}{|\tau _1-\tau +(k_1 -k)^2|\gec |k_1-k|},\\
\Omega _{12}&:=\Shugo{(\tau ,k,\tau _1,k_1)\in \Omega _1}{|\tau _1+k_1^2|\gec |k_1|},\\
\Omega _{13}&:=\Omega _1\setminus (\Omega _{11}\cup \Omega_{12}),}
then $\Omega _{13}$ is treated in the same manner as $\Omega _{23}$.
In $\Omega _{11}$, we imitate the estimate for $\Omega _{21}$, in turn decomposing $v$ into dyadic pieces in $k$.

For $\Omega _{12}$, we again perform a similar decomposition
\eqq{\Omega _{12a}&:=\Omega _{12}\cap \shugo{\LR{\tau _1-\tau +(k_1-k)^2}\ll \LR{k_1-k},\, \LR{k_1}\lec \LR{\tau _1+k_1^2}\ll \LR{k}\LR{k_1-k}},\\
\Omega _{12b}&:=\Omega _{12}\cap \shugo{\LR{\tau _1-\tau +(k_1-k)^2}\ll \LR{k_1-k},\, \LR{\tau _1+k_1^2}\sim \LR{k}\LR{k_1-k}},\\
\Omega _{12c}&:=\Omega _{12}\cap \shugo{\LR{\tau _1-\tau +(k_1-k)^2}\ll \LR{k_1-k},\, \LR{\tau _1+k_1^2}\gg \LR{k}\LR{k_1-k}}.
}

The argument for $\Omega _{12a}$ or $\Omega _{12c}$ is the same, so we focus on the case of $\Omega _{12b}$.
If we localize $v$ (and thus $u\bar{v}$) to the frequency $\LR{k}\sim N$, we see that $u$ has very high modulation $\LR{\tau _1+k_1^2}\sim N^2$, and that it is risky to employ Lemma~\ref{lem_RB2} as for $\Omega _{22b}$.
Rather, it is natural here to use Lemma~\ref{lem_DRB2}, and fortunately it will provide enough gain of regularity.
It will turn out that the stronger (Besov) structure of $W^{-1/2}$ is not necessary here.

Lemma~\ref{lem_DRB2} yields $\LR{k-(k_1-k)}^{1/2}$ gain of regularity, so we further divide $\Omega _{12b}$ as
\eqq{\Omega _{12b-0}&:=\Shugo{(\tau ,k,\tau _1,k_1)\in \Omega _{12b}}{\LR{k-(k_1-k)}\ll \LR{k}},\\
\Omega _{12b-1}&:=\Shugo{(\tau ,k,\tau _1,k_1)\in \Omega _{12b}}{\LR{k-(k_1-k)}\sim \LR{k}}.}

In $\Omega _{12b-0}$, we can exploit the property $\LR{k}\sim \LR{k_1}\sim \LR{k_1-k}$.
Then, the estimate is much easier; for instance, the argument for $\Omega _{11}$ or $\Omega _{21}$ is also sufficient here.

We next see the estimate in $\Omega _{12b-1}\cap \De _2^c$, where $\De _2$ is as in Lemma~\ref{lem_DRB2}.
Since $\LR{\tau +k^2}\lec \LR{k}\LR{k_1-k}\sim \LR{k}^2$, we do not need to control the $Y^{-1/2}$ norm of $\La ^{-1}(u\bar{v})$.
By Lemma~\ref{lem_embedding} ($\th =\frac{3}{4}$), it suffices to estimate
\eqq{&\norm{\proj{\LR{\tau +k^2}\lec \LR{k}}B_{\Omega _{12b-1}\cap \De _2^c}(u,v)}{X^{-1/2,0}}\\
&+\norm{\proj{\LR{k}\lec \LR{\tau +k^2}\lec \LR{k}^2}B_{\Omega _{12b-1}\cap \De _2^c}(u,v)}{X^{1/4,-3/4}},}
thus we will show
\eqq{&N^{-1/2}\ttsum _{M\lec N}\norm{\proj{\LR{\tau +k^2}\sim M}B_{\Omega _{12b-1}\cap \De _2^c}(u,v_N)}{L^2_{t,x}}\\
&+N^{1/4}\ttsum _{N\lec M\lec N^2}M^{-3/4}\norm{\proj{\LR{\tau +k^2}\sim M}B_{\Omega _{12b-1}\cap \De _2^c}(u,v_N)}{L^2_{t,x}}\lec \norm{u}{X^{1/2,0}}\norm{v_N}{X^{-1/2,1}}}
for each dyadic $N\ge 1$, where $v_N:=\proj{\LR{k}\sim N}v$.
Note that Lemma~\ref{lem_DRB2} now produces the $N^{1/2}$ gain of regularity.
Decomposing $v$ with respect to $\tau +k^2$ and applying Lemma~\ref{lem_DRB2}, we bound the left-hand side by
\eqq{&\Big( N^{-1}\ttsum _{M\lec N}M^{1/2}+N^{-1/4}\ttsum _{N\lec M\lec N^2}M^{-1/4}\Big) \norm{u}{L^2_{t,x}}\ttsum _{M_2\ge 1}M_2^{1/2}\norm{\proj{\LR{\tau +k^2}\sim M_2}v_N}{L^2_{t,x}}\\
&\hx \lec \norm{u}{L^2_{t,x}}\cdot N^{-1/2}\ttsum _{M_2\ge 1}M_2^{1/2}\norm{\proj{\LR{\tau +k^2}\sim M_2}v_N}{L^2_{t,x}}\lec \norm{u}{X^{1/2,0}}\norm{v_N}{X^{-1/2,1}},}
as desired.

Finally, in $\Omega _{12b-1}\cap \De _2$ we decompose $v$ again and use Lemma~\ref{lem_DRB2} to obtain
\eqq{\norm{B_{\Omega _{12b-1}\cap \De _2}(u,v_N)}{X^{-1/2,0}}\lec \la ^{-1/2}\norm{u}{L^2_{t,x}}N^{-1/2}\ttsum _{M_2\ge 1}M_2^{1/2}\norm{\proj{\LR{\tau +k^2}\sim M_2}v_N}{L^2_{t,x}},}
which is sufficient.

% \eqq{%&\norm{B_{\Omega _{22}}(u_N,v)}{Z^{-1/2}}\\
% &\ttsum _{M_1\ge 1}\Big( \ttsum _{1\le M\lec N}\norm{P_{\LR{\tau +k^2}\sim M}B_{\Omega _{22}\cap \De _2^c}(P_{\LR{\tau +k^2}\sim M_1}u_N,v)}{X^{-1/2,0}}\\
% &\hxx \hx +\ttsum _{M\gec N}\norm{P_{\LR{\tau +k^2}\sim M}B_{\Omega _{22}\cap \De _2^c}(P_{\LR{\tau +k^2}\sim M_1}u_N,v)}{X^{1/2,-1}}\Big) \\
% &\hx \lec \ttsum _{M_1\ge 1}N^{-1/2}M_1^{1/2}\Big( \ttsum _{1\le M\lec N}N^{-1/2}M^{1/2}+\ttsum _{M\gec N}N^{1/2}M^{-1/2}\Big) \\
% &\hxx \hx \times \norm{P_{\LR{\tau +k^2}\sim M_1}u_N}{L^2_{t,x}}\norm{v}{L^2_{t,x}}\\
% &\hx \lec \ttsum _{M_1\ge 1}N^{-1/2}M_1^{1/2}\norm{P_{\LR{\tau +k^2}\sim M_1}u_N}{L^2_{t,x}}\norm{v}{L^2_{t,x}}
% }

\smallskip
\midashi{Estimate in $\Omega _3$}

We begin with the same division of domain as before:
\eqq{\Omega _{31}&:=\Shugo{(\tau ,k,\tau _1,k_1)\in \Omega _3}{|\tau _1+k_1^2|\gec |k_1|},\\
\Omega _{32}&:=\Shugo{(\tau ,k,\tau _1,k_1)\in \Omega _3}{|\tau _1-\tau +(k_1 -k)^2|\gec |k_1-k|},\\
\Omega _{33}&:=\Omega _3\setminus (\Omega _{31}\cup \Omega_{32}).}
% We see that $\Omega _{33}$ can be treated in the identical manner.

$\Omega _{31}$ and $\Omega _{32}$ are almost symmetric.
Consider $\Omega _{31}$.
We first deal with the $Y^{-1/2}$ norm, which is easily handled with the H\"older inequality followed by the Young:
\eqq{&\norm{\La ^{-1}\proj{\LR{\tau +k^2}\gg \LR{k}^2}B_{\Omega _{31}}(u,v)}{Y^{-1/2}}\lec \norm{\La ^{-3/4}(J^{1/2}u\bbar{J^{-1/2}v})}{Y^{-1}}\\
&\hx \lec \norm{\F (J^{1/2}u\bbar{J^{-1/2}v})}{\ell ^\I L^2}\lec \norm{u}{X^{1/2,0}}\norm{v}{Y^{-1/2}}.}

To estimate the remainder of the $W^{-1/2}$ norm, a further decomposition is required:
\eqq{\Omega _{31a}&:=\Shugo{(\tau ,k,\tau _1,k_1)\in \Omega _{31}}{|\tau _1-\tau +(k_1 -k)^2|\lec |k_1-k|},\\
\Omega _{31b}&:=\Shugo{(\tau ,k,\tau _1,k_1)\in \Omega _{31}}{|\tau _1-\tau +(k_1 -k)^2|\gec |k_1-k|}.}

In $\Omega _{31a}$ we have to measure $v$ in $X^{-1/2,1}$, so the $\LR{k_1-k}^{1/2}$ gain of regularity is essential, which we can generate from Lemma~\ref{lem_DRB2}.
Note that $\LR{k-(k_1-k)}\sim \LR{k_1-k}$ in $\Omega _3$.
For the estimate in $\Omega _{31a}\cap \De _2^c$, we use Lemma~\ref{lem_embedding} with $\th >\frac{1}{2}$ and Lemma~\ref{lem_DRB2} as follows:
\eqq{&\norm{B_{\Omega _{31a}\cap \De _2^c}(u,v)}{X^{\th -1/2,-\th}}\lec \ttsum _{M\ge 1}\norm{\proj{\LR{\tau +k^2}\sim M}B_{\Omega _{31a}\cap \De _2^c}(J^{\th -1/2}u,v)}{X^{0,-\th}}\\
&\hx \lec \ttsum _{M\ge 1}\ttsum _{M_2\ge 1}M^{1/2-\th}M_2^{1/2}\norm{u}{X^{\th -1/2,0}}\norm{\proj{\LR{\tau +k^2}\sim M_2}v}{X^{-1/2,0}}\lec \norm{u}{X^{1/2,0}}\norm{v}{X^{-1/2,1}}.}
At the last inequality we have used the Cauchy-Schwarz inequality in $M_2$.
In $\Omega _{31a}\cap \De _2$, we have
\eqq{&\norm{B_{\Omega _{31a}\cap \De _2}(u,v)}{X^{-1/2,0}}\lec \norm{B_{\Omega _{31a}\cap \De _2}(J^{1/2}u,J^{-1/2}v)}{X^{0,0}}\\
&\hx \lec \la ^{-1/2}\norm{u}{X^{1/2,0}}\ttsum _{M_2\ge 1}M_2^{1/2}\norm{\proj{\LR{\tau +k^2}\sim M_2}v}{X^{-1/2,0}}\lec \norm{u}{X^{1/2,0}}\norm{v}{X^{-1/2,1}}.}

It remains to treat $\Omega _{31b}$.
Here, we may measure both $u$ and $v$ in $X^{1/2,0}$.
From Lemma~\ref{lem_embedding} with $\th =\frac{1}{2}+\e$ ($0<\e \ll 1$), it suffices to bound
\eqq{\norm{B_{\Omega _{31b}}(u,v)}{X^{\e ,-1/2-\e}}\lec \norm{B_{\Omega _{31b}}(J^{\e}u,v)}{X^{0,-1/2-\e}}.}
Using the H\"older and the Young inequalities, we evaluate the above by
\eqq{\norm{\F B_{\Omega _{31b}}(J^{\e}u,v)}{\ell ^2L^\I}\lec \norm{\ti{J^\e u}}{\ell ^{4/3}L^2}\norm{\ti{v}}{\ell ^{4/3}L^2}\lec \norm{u}{X^{1/2,0}}\norm{v}{X^{1/2,0}},}
as required.

For $\Omega _{32}$ we just have to use Lemma~\ref{lem_DRB1} (with $u$ and $v$ replaced with $\bar{v}$ and $u$, respectively) instead of Lemma~\ref{lem_DRB2} and follow the above argument for $\Omega _{31}$.

% We may measure $u$ in $X^{s+1,0}$.
% Since $0>s>-1/2$, we can choose $1<q<2<p<\I$ such that
% \eqq{-s>\ttfrac{1}{2}-\ttfrac{1}{p},\qquad 1+\ttfrac{1}{p}=\ttfrac{1}{q}+\ttfrac{1}{2},\qquad 2s+1>\ttfrac{1}{q}-\ttfrac{1}{2}.}
% For such $(p,q)$, the H\"older inequality, followed by the Young and again the H\"older, implies that
% \eqq{&\norm{B_{\Omega_{31}}(u,v)}{X^{s,0}}\lec \norm{\F B_{\Omega_{31}}(J^{-s}u,J^sv)}{\ell ^pL^2}\lec \norm{\F J^{-2s-1}J^{s+1}u}{\ell ^qL^2}\norm{\ti{J^sv}}{\ell ^2L^1}\\
% &\hx \lec \norm{u}{X^{s+1,0}}\norm{v}{Y^s}.}
% The case of $\Omega _{32}$ is almost identical.

Now, we treat the remaining region $\Omega _{33}$, where we have to estimate
\eqq{\ttsum _{M\ge 1}\norm{\proj{\LR{\tau +k^2}\sim M}B_{\Omega _{33}}(u,v)}{X^{1/2,-1}}+\norm{\La ^{-1}B_{\Omega _{33}}(u,v)}{Y^{-1/2}}.}
The $Y^{-1/2}$ norm is treated in the same way as for the case of $s>-\frac{1}{2}$.
Since $\LR{k_1}\sim \LR{k_1-k}$ and $\LR{\tau +k^2}\sim \LR{k}\LR{k_1-k}\gec \LR{k_1-k}$, after decomposing $u$ and $v$ in $k$ we only have to show that
\eqq{\ttsum _{M\gec N}\Big( \displaystyle{\ttfrac{M}{N}}\Big)^{1/2} M^{-1}\norm{\proj{\LR{\tau +k^2}\sim M}(u_N\bbar{v_N})}{X^{0,0}}\lec N^{-1}\norm{u_N}{X^{0,1}}\norm{v_N}{X^{0,1}}}
for each dyadic $N\ge 1$, where $u_N:=\proj{\LR{k}\sim N}u$ and similarly for $v_N$.
This estimate follows easily from Lemma~\ref{lem_L4}.

% Now, consider $\Omega _{33}$, where we again have $\LR{\tau +k^2}\sim \LR{k}\LR{k_2}$ from \eqref{RE1}.
% We may measure $u$ and $v$ in $X^{s,1}$ or $Y^s$.
% Using Lemma~\ref{lem_L4} we obtain
% \eqq{\norm{B_{\Omega_{33}}(u,v)}{X^{s+1,-1}}\lec \norm{J^{-1/2}u\bbar{J^{-1/2}v}}{X^{s,0}}\lec \norm{u}{X^{-1/2,3/8}}\norm{v}{X^{-1/2,3/8}},}
% while an application of the H\"older inequality implies
% \eqq{\norm{\La ^{-1}B_{\Omega _{33}}(u,v)}{Y^s}\sim \norm{B_{\Omega _{33}}(J^{-1/2}u,J^{-1/2}v)}{Y^{s-1}}\lec \norm{\F (J^{-1/2}u\bbar{J^{-1/2}v})}{\ell ^\I L^1},}
% which is evaluated by $\tnorm{u}{Y^{-1/2}}\tnorm{v}{Y^{-1/2}}$ from the Young inequality.

\smallskip
\midashi{Estimate in $\Omega _4$}

We shall prove that
\eq{claim_Omega4'}{\norm{\La ^{-1}\proj{|k|\sim N}B_{\Omega _4}(u,v)}{W^{-1/2}}\lec N^{-1/2}\norm{u}{W^{-1/2}}\norm{v}{W^{-1/2}}}
for dyadic $N\in [\la ^{-1},1]$.
The desired estimate will follow by summing \eqref{claim_Omega4} over $N$.

We fix $N$ and again divide $\Omega _4$ as follows:
\eqq{\Omega _{41}&:=\Shugo{(\tau ,k,\tau _1,k_1)\in \Omega _4}{|\tau _1+k_1^2|\gec |k_1|},\\
\Omega _{41}'&:=\Shugo{(\tau ,k,\tau _1,k_1)\in \Omega _4}{|k_1|\gec |\tau _1+k_1^2|\gec N|k_1|},\\
\Omega _{42}&:=\Shugo{(\tau ,k,\tau _1,k_1)\in \Omega _4}{|(\tau _1-\tau )+(k_1 -k)^2|\gec |k_1-k|},\\
\Omega _{42}'&:=\Shugo{(\tau ,k,\tau _1,k_1)\in \Omega _4}{|k_1-k|\gec |(\tau _1-\tau )+(k_1 -k)^2|\gec N|k_1-k|},\\
\Omega _{43}&:=\Omega _4\setminus (\Omega _{41}\cup \Omega_{41}'\cup \Omega _{42}\cup \Omega_{42}'),}
then the previous argument for $s>-\frac{1}{2}$ also works in each region, except for the estimate of the $X^{1/2,0}$ norm in $\Omega _{43}$.

To conclude the proof, it is sufficient to decompose $u$ and $v$ into dyadic pieces in $k$ and show that
\eqq{\ttsum _{M\ge 1}\norm{\proj{\LR{\tau +k^2}\sim M}\proj{|k|\sim N}B_{\Omega _{43}}(u_{N_1},v_{N_1})}{X^{1/2,-1}}\lec N^{-1/2}\norm{u_{N_1}}{X^{-1/2,1}}\norm{v_{N_1}}{X^{-1/2,1}}}
for dyadic $N_1\ge 1$.
Observe again that $\LR{\tau +k^2}\ge |\tau +k^2|\sim N|k_1|\sim NN_1$, which implies
\eqq{&\ttsum _{M\ge 1}\norm{\proj{\LR{\tau +k^2}\sim M}\proj{|k|\sim N}B_{\Omega _{43}}(u_{N_1},v_{N_1})}{X^{1/2,-1}}\\
&\hx \sim \norm{\proj{\LR{\tau +k^2}\sim \max \shugo{1,NN_1}}\proj{|k|\sim N}B_{\Omega _{43}}(u_{N_1},v_{N_1})}{X^{1/2,-1}}.}
Therefore, the summation over $M$ essentially consists of just one $M$, and we may estimate the usual $X^{1/2,0}$ norm instead of the stronger Besov-type $X^{1/2,0}$ norm.
Now, the previous argument for $s>-\frac{1}{2}$ works.

\smallskip
When the nonlinearity $u\bar{v}$ is replaced by $uv$ or $\bar{u}\bar{v}$, we use the algebraic relation \eqref{RE2} for $uv$ or \eqref{RE3} for $\bar{u}\bar{v}$, and also replace $B_\Omega (u,v)$ with $B'_\Omega (u,v)$ or $B''_\Omega (u,v)$, respectively.
Note that for these nonlinearities the two cases of $\Omega _1$ and $\Omega _2$ are completely symmetric and we just have to consider any one of them, and also that similarly to the case of $s>-\frac{1}{2}$, we do not need to separate $\Omega _4$ from $\Omega _3$, so there occurs no loss in $\la$.

In fact, the above proof for $u\bar{v}$ has to be reconsidered just in the cases where we have used one of modified bilinear $L^4$ estimates (Lemma~\ref{lem_RB1}--\ref{lem_DRB2}).
For other cases, modification of the argument is trivial.

For $uv$, we consider the region corresponding to $\Omega _0\cup \Omega _2\cup \Omega _3$, then the proof should be changed in treating $\Omega _{22b}$, $\Omega _{31}$ and $\Omega _{32}$.
For the first case, we use Lemma~\ref{lem_RB1} instead of Lemma~\ref{lem_RB2} and follow the above proof.
Note that we have to consider the region $\LR{k_1-(k-k_1)}\ll \LR{k_1}$ separately, where $\LR{k-k_1}\sim \LR{k_1}$ holds and we can imitate the above proof in $\Omega _{12b-0}$.
The last two cases are symmetric, where we may follow the above proof for $\Omega _{31}$, using Lemma~\ref{lem_DRB1} instead of Lemma~\ref{lem_DRB2}.

Consider $\bar{u}\bar{v}$ next.
Now, we treat the region corresponding to $\Omega _0\cup \Omega _1\cup \Omega _3$, and focus on the proof in the cases of $\Omega _{12b}$, $\Omega _{31}$ and $\Omega _{32}$.
In $\Omega _{12b}$, however, the same argument as above (with Lemma~\ref{lem_DRB2}) is applicable.
In the remaining cases are symmetric and we can apply Lemma~\ref{lem_DRB2} again to conclude the proof.

This is the end of the proof for Proposition~\ref{prop_be}.
\end{proof}

%%%%%%%%%%%%%%%%%%%%%%%%%%%%%%%
%%%%%%%%%%%%%%%%%%%%%%%%%%%%%%%
%%%%%%%%%%%%%%%%%%%%%%%%%%%%%%%

\bigskip
\section{Ill-posedness for $s<-\frac{1}{2}$}\label{sec_illp}

In this section we prove Theorem~\ref{thm_main} (II).
The scaling argument will again play a major role in the proof of ill-posedness.
Let $\la \ge 1$ be a large spatial period to be chosen later, and consider the rescaling equation \eqref{gB''} first (for a while we omit the superscript $\la$).
We observe that the nonlinear interaction of $\omega _\la ^2(u\bar{u})$ shows bad behavior below $H^{-1/2}$ in the first nonlinear iterate.

\begin{lem}\label{lem_cascade}
Let $\la \gg 1$ and $0<t_0\le 1$.
Suppose that $N\in Z^*_\la$ satisfies
\eq{cond_N}{N>0,\qquad \ttfrac{1}{2Nt_0}\ll \ttfrac{1}{\la},\qquad \ttfrac{2Nt_0}{\la}\in [\frac{\pi}{2},\frac{3\pi}{2}] \mod 2\pi .}
When $Z_\la =\T_\la$, define the $2\pi \la$-periodic function $\phi _{\la,N}$ by
\eqq{\F \phi _{\la,N}(k)=\begin{cases} 1, &k=N~\text{or}~N+\la ^{-1},\\0, &\text{otherwise}.\end{cases}}
When $Z_\la =\R$, define  $\phi _{\la,N}$ on $\R$ by
\eqq{\F \phi _{\la,N}(\xi )=\begin{cases} 1, &\xi \in [N,N+\frac{\la ^{-1}}{10}]\cup [N+\la ^{-1},N+\frac{11\la ^{-1}}{10}],\\0, &\text{otherwise}.\end{cases}}
Then, for any $s$ it holds that
\eq{claim_normofphi}{\norm{\phi _{\la,N}}{H^s(Z_\la )}\sim \la ^{-1/2}N^{s},}
\eq{claim_normofa2}{\norm{A_2(\phi _{\la,N})(t_0)}{H^s(Z_\la )}\gec \la ^{-1/2}N^{-1},}
where
\eqq{A_2(\phi )(t):=\ttfrac{i}{2}\int _0^t e^{i(t-t')\p _x^2}\omega _\la ^2\big[ e^{it'\p _x^2}\phi \cdot \bbar{e^{it'\p _x^2}\phi}\big] \,dt'.}
\end{lem}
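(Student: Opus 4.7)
The plan is a direct Fourier-side computation of both quantities. For the first bound \eqref{claim_normofphi}, the Fourier data $\F\phi_{\la,N}$ is by construction localized at frequencies of size $\sim N$ with total $\ell^2(\Zl)$ (resp.\ $L^2(\R)$) mass $\sim\la^{-1/2}$---the factor $\la^{-1/2}$ comes from the normalized counting measure on $\Zl$ in the torus case, or from the Lebesgue measure of two intervals of length $\sim\la^{-1}$ on $\R$. Combined with $\LR{N}^{s}\sim N^{s}$, which uses $N\gg\la\ge 1$ (itself implied by $(2Nt_0)^{-1}\ll\la^{-1}$), this immediately gives \eqref{claim_normofphi}.

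For the second bound \eqref{claim_normofa2}, I take the spatial Fourier transform of $A_2(\phi_{\la,N})(t_0)$, write the nonlinearity $u\bbar{u}$ as a Fourier convolution, and carry out the $t'$-integration explicitly. Using $\F_x u(t)(\xi)=e^{-it\xi^2}\F\phi(\xi)$ and $\F_x\bbar{u}(t)(\xi)=e^{it\xi^2}\bbar{\F\phi(-\xi)}$, one arrives at
\[
\F_x[A_2(\phi)(t_0)](\xi)=\tfrac{i}{2}\,\omega_\la^2(\xi)\,e^{-it_0\xi^2}\!\int \F\phi(\xi_1)\,\bbar{\F\phi(\xi_1-\xi)}\,\frac{e^{it_0\Phi}-1}{i\Phi}\,d\xi_1,
\]
with $\Phi(\xi,\xi_1):=2\xi(\xi-\xi_1)$, the integral being replaced by $\la^{-1}\sum_{k_1\in\Zl}$ on $\Tl$. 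The Fourier support of $\phi_{\la,N}$ restricts $\xi$ to lie in $\{0,\pm\la^{-1}\}$ on $\Tl$, or in narrow neighborhoods thereof on $\R$. At $\xi=0$ the symbol $\omega_\la^2$ vanishes and this contribution drops; at $\xi=\pm\la^{-1}$ one has $\omega_\la^2=\tfrac{1}{2}$, $\Phi\approx\mp 2N/\la$, and the time integral has magnitude $|e^{it_0\Phi}-1|/|\Phi|\sim\la/N$, where the angular condition $2Nt_0/\la\in[\pi/2,3\pi/2]\mod 2\pi$ is precisely what keeps $|e^{it_0\Phi}-1|\gtrsim 1$.

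On $\Tl$, the $\la^{-1}$ from the normalized counting measure combines with the time-integral factor $\la/N$ to give $|\F_x A_2(\phi)(t_0)(\pm\la^{-1})|\sim 1/N$; together with $\LR{\la^{-1}}^{2s}\sim 1$ this yields $\|A_2(\phi)(t_0)\|_{H^s}^2\gtrsim\la^{-1}N^{-2}$, as required. On $\R$ one first checks that for $\xi$ in a subinterval of length $\sim\la^{-1}$ near $\pm\la^{-1}$ the $\xi_1$-variation of $t_0\Phi$ is $O(t_0/\la^2)\ll 1$ (from $\p_{\xi_1}\Phi=-2\xi\sim 2\la^{-1}$), so there is no oscillatory cancellation in the $\xi_1$ integral; hence $|\F_x A_2(\phi)(\xi)|\sim 1/N$ pointwise on this subinterval, and $L^2$-integration recovers the same lower bound. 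The main obstacle is the quantitative balance imposed by \eqref{cond_N}: the smallness $(2Nt_0)^{-1}\ll\la^{-1}$ and the angular condition together isolate exactly the critical output frequency $\xi\sim\la^{-1}$, at which $\omega_\la^2$ is order-one and the high$\times$high$\to$low interaction produces the dangerous low-frequency output responsible for ill-posedness below $H^{-1/2}$.
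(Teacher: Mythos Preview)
Your computation of \eqref{claim_normofphi} and the torus case of \eqref{claim_normofa2} are correct and essentially identical to the paper's proof.

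For the $\R$ case of \eqref{claim_normofa2} there is a genuine gap. You correctly observe that the $\xi_1$-variation of $t_0\Phi$ over the support of the integrand is $O(t_0/\la^2)\ll 1$, so the $\xi_1$-integral has no internal cancellation. But you then conclude that $|\F_x A_2(\phi)(t_0,\xi)|\sim 1/N$ \emph{pointwise} for all $\xi$ in a subinterval of length $\sim\la^{-1}$ near $\la^{-1}$, and this is false. With $\xi_1\sim N$ essentially fixed, the phase $t_0\Phi=2t_0\xi(\xi-\xi_1)\approx -2t_0\xi N$ has $\xi$-derivative $\sim -2t_0 N$, so over a $\xi$-interval of length $\sim\la^{-1}$ it varies by $\sim 2t_0 N/\la$, which is $\gg 1$ precisely by the second hypothesis in \eqref{cond_N}. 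Thus $|e^{it_0\Phi}-1|$ oscillates between $0$ and $2$ as $\xi$ moves, and the angular condition in \eqref{cond_N} pins this quantity down only at the single point $\xi=\la^{-1}$, not on a $\la^{-1}$-neighborhood. Restricting instead to a $\xi$-interval of length $\sim 1/(t_0 N)$ on which the phase is essentially constant would give an $L^2$ lower bound of order $(t_0 N)^{-1/2}N^{-1}\ll \la^{-1/2}N^{-1}$, which is too weak.

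The paper's fix is exactly to exploit the many-period oscillation in $\xi$: since $t_0\Phi$ sweeps through $\gg 1$ periods as $\xi$ ranges over the full interval of length $\sim\la^{-1}$, one partitions this $\xi$-interval into subintervals $I_j$ of length $\sim 1/(t_0 N)$ (on each of which the phase is essentially constant in both $\xi$ and $\xi_1$), and retains only those $I_j$ on which $\cos(t_0\Phi)\le \tfrac12$. These constitute a positive fraction of all the $I_j$, hence have total measure $\sim\la^{-1}$, and on each of them the pointwise bound $|\F_x A_2(\xi)|\gec N^{-1}$ does hold; the $L^2$ lower bound $\la^{-1/2}N^{-1}$ then follows. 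Note that in this argument the third condition in \eqref{cond_N} is not actually needed for the $\R$ case---it is the second condition (many periods) that does the work.
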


\begin{proof}
\eqref{claim_normofphi} is easily derived from the definition.

For \eqref{claim_normofa2}, consider the case of $\Tl$ first.
It suffices to show that
\eqq{\big| \F _xA_2(\phi _{\la ,N})(t_0,\la ^{-1})\big| \gec N^{-1}.}
An explicit calculation then implies that
\eqq{\F _xA_2(\phi _{\la ,N})(t_0,\la ^{-1})=C\frac{1}{\la}\int _0^{t_0}e^{-2i\la ^{-1}Nt'}\,dt'=C'\frac{1}{N}(e^{-2i\la ^{-1}Nt_0}-1),}
where $C,C'\in \Bo{C}$ is a constant (depending on $\la$) such that $|C|,|C'|\sim 1$.
Therefore, the claim follows from \eqref{cond_N}.

Next, consider the case $Z_\la =\R$.
Again by some calculation, we see that for $\frac{9\la ^{-1}}{10}\le \xi \le \frac{11\la ^{-1}}{10}$
\eqq{\F _xA_2(\phi _{\la ,N})(t_0,\xi )&=C\ttfrac{\la ^2\xi ^2}{1+\la ^2\xi ^2}\int _\R \F \phi _{\la,N}(\xi ')\bbar{\F \phi _{\la,N}(\xi '-\xi)}\int _0^{t_0}e^{it'\{ \xi ^2-\xi '^2+(\xi '-\xi )^2\}}dt'\,d\xi '\\
&=C\ttfrac{\la ^2\xi ^2}{1+\la ^2\xi ^2}\int _{N+\la ^{-1}}^{N+\frac{11}{10}\la ^{-1}} \bbar{\F \phi _{\la,N}(\xi '-\xi)}\frac{e^{-2it_0\xi (\xi '-\xi )}-1}{\xi (\xi '-\xi )}\,d\xi '.}
In particular, for $\la ^{-1}\le \xi \le \frac{21\la ^{-1}}{20}$, 
\eqq{|\F _xA_2(\phi _{\la ,N})(t_0,\xi )|&\gec \Big| \int _{N+\xi}^{N+\frac{11}{10}\la ^{-1}} \Re \Big[ \frac{e^{-2it_0\xi (\xi '-\xi )}-1}{\xi (\xi '-\xi )}\Big] \,d\xi '\Big| \\
&\gec \frac{1}{\la ^{-1}N}\int _{N+\xi}^{N+\frac{11}{10}\la ^{-1}} \big( 1-\cos (2t_0\xi (\xi '-\xi ))\big) \,d\xi '\\
&\ge \frac{1}{\la ^{-1}N}\int _{N}^{N+\frac{1}{20}\la ^{-1}} \big( 1-\cos (2t_0\xi \xi ')\big) \,d\xi '.}
Observe that the value of $2t_0\xi \xi '$ varies over many periods while $(\xi ,\xi ')$ moves on $[\la ^{-1},\frac{21}{20}\la ^{-1}]\times [N,N+\frac{1}{20}\la ^{-1}]$, since we have assumed $\frac{t_0N}{\la}\gg 1$.
We split the interval $[\la ^{-1},\frac{21}{20}\la ^{-1}]$ into subintervals $\{ I_j\}$ of length $\frac{1}{100t_0N}\le |I_j|\le \frac{1}{50t_0N}$, then $2t_0\xi \xi '$ varies at most over $\frac{1}{10}$ periods on each interval $(\xi ,\xi ')\in I_j\times [N,N+\frac{1}{20}\la ^{-1}]$.
We only pick up intervals $I_j$ such that $\cos (2t_0\xi \xi ')\le \frac{1}{2}$ on $I_j\times [N,N+\frac{1}{20}\la ^{-1}]$, and obtain
\eqq{\norm{A_2(\phi _{\la,N})(t_0)}{H^s(\R )}&\gec \norm{\F _xA_2(\phi _{\la,N})(t_0)}{L^2([\la ^{-1},\frac{21}{20}\la ^{-1}])}\gec \frac{1}{N}\cdot \la ^{-1/2}.\qedhere}
\end{proof}

% Let $0<\de <1$ be a small parameter to be chosen later.
Now, fix $t_0\in (0,1]$.
For $n\in \Bo{N}$, we choose $N_n\in Z^*_\la$ satisfying \eqref{cond_N} and $N_n\to \I$ ($n\to \I$).
Let us define 
\eqq{u_{0n}:=\de N_n^{1/2}\phi _{\la ,N_n},}
where $\de>0$ is a small parameter to be chosen later, which is independent of $\la$.
Note that \eqref{claim_normofphi} implies $\tnorm{u_{0n}}{H^{-1/2}}\sim \de \la ^{-1/2}$, so we see from the proof of Theorem~\ref{thm_main}~(I) in Section~\ref{sec_def} that there exists a solution $u_n(t)$ to \eqref{gB''} on the time interval $[-1,1]$, which is unique in a closed ball of $W^{-1/2}_1$, if $\la \gg 1$ and $\de \ll 1$.

Also define
\eqs{u_{1n}(t):=e^{it\p _x^2}u_{0n},\qquad u_{2n}(t):=A_2(u_{0n})(t)=\frac{i}{2}\int _0^t e^{i(t-t')\p _x^2}\omega _\la ^2\big[ u_{1n}\bbar{u_{1n}}\big] (t')\,dt',}
and $w_n:=u_n-\big( u_{1n}+u_{2n}\big)$.
Then, $w_n$ satisfies the integral equation
\eqq{&w_n(t)=-i\la ^{-2}\int _0^te^{i(t-t')\p _x^2}\ttfrac{1}{2}\Big\{ \big( u_{1n}+u_{2n}+w_n\big) -\bbar{\big( u_{1n}+u_{2n}+w_n\big)}\Big\} (t')\,dt'\\
&+i\int _0^te^{i(t-t')\p _x^2}\ttfrac{\omega _\la ^2}{4}\Big\{ (u_{1n}+u_{2n}+w_n)^2+\bbar{(u_{1n}+u_{2n}+w_n)}^2\Big\} (t')\,dt'\\
&+i\int _0^te^{i(t-t')\p _x^2}\ttfrac{\omega _\la ^2}{2}\Big\{ u_{1n}\bbar{(u_{2n}+w_n)}+\bbar{u_{1n}}(u_{2n}+w_n)+(u_{2n}+w_n)\bbar{(u_{2n}+w_n)}\Big\} (t')\,dt'.}
On the other hand, estimates for LWP (Proposition~\ref{prop_be}, Lemma~\ref{lem_nonlinear1}) in $H^{-1/2}$ yield that
\eqs{\norm{u_{1n}}{W^{-1/2}_1}\lec \norm{u_{0n}}{H^{-1/2}}\sim \de \la ^{-1/2},\\
\norm{u_{2n}}{W^{-1/2}_1}\lec \la ^{1/2}\norm{u_{1n}}{W^{-1/2}_1}^2\lec \de ^2\la ^{-1/2}}
for any $n$.
Therefore, we can deduce from the above integral equation that
\eqq{\norm{w_n}{W^{-1/2}_1}&\lec \la ^{-2}\norm{u_{1n}+u_{2n}+w_n}{W^{-1/2}_1}+\norm{u_{1n}+u_{2n}+w_n}{W^{-1/2}_1}^2\\
&\hx +\la ^{1/2}\Big( \norm{u_{1n}}{W^{-1/2}_1}\norm{u_{2n}+w_n}{W^{-1/2}_1}+\norm{u_{2n}+w_n}{W^{-1/2}_1}^2\Big) \\
&\lec \la ^{-2}\big( \de \la ^{-1/2}+\norm{w_n}{W^{-1/2}_1}\big) +\big( \de \la ^{-1/2}+\norm{w_n}{W^{-1/2}_1}\big) ^2\\
&\hx +\la ^{1/2}\big\{ \de \la ^{-1/2}(\de ^2\la ^{-1/2}+\norm{w_n}{W^{-1/2}_1})+(\de ^2\la ^{-1/2}+\norm{w_n}{W^{-1/2}_1})^2\big\} \\
&\lec \de \la ^{-5/2}+\de ^2\la ^{-1}+\de ^3\la ^{-1/2}+(\la ^{-2}+\de )\norm{w_n}{W^{-1/2}_1}+\la ^{1/2}\norm{w_n}{W^{-1/2}_1}^2.}
If $\de$ and $\la ^{-1}$ are small enough, it follows that
\eqq{\la ^{1/2}\norm{w_n}{W^{-1/2}_1}&\lec (\de \la ^{-2}+\de ^2\la ^{-1/2}+\de ^3)+\la ^{-1/2}\big( \la ^{1/2}\norm{w_n}{W^{-1/2}_1}\big) ^2.}

It is easily verified that the above estimate also holds with the same constant when we replace $u_{0n}$ with $\th u_{0n}$, $0\le \th \le 1$ in the definition of $u_n,u_{1n},u_{2n}$ and $w_n$.
It follows from the LWP results in $H^{-1/2}$ that for each $n$,
\[ [0,1]\ni \th \mapsto \norm{w_n(\th )}{W^{-1/2}_1}\]
is continuous and $w_n(0)=0$.
Therefore, we conclude that
\eqq{\sup _n\norm{w_n}{W^{-1/2}_1}\lec \de \la ^{-5/2}+\de ^2\la ^{-1}+\de ^3\la ^{-1/2},}
and thus
\eqq{\sup _n \sup _{-1\le t\le 1}\norm{w_n(t)}{H^{-1/2}}\lec \de \la ^{-5/2}+\de ^2\la ^{-1}+\de ^3\la ^{-1/2},}
whenever $\la$ is sufficiently large.

Let $s<-\frac{1}{2}$.
Combining the above with \eqref{claim_normofphi} and \eqref{claim_normofa2}, we see that
\eqq{\norm{u_n(t_0)}{H^s}&\ge \norm{u_{2n}(t_0)}{H^s}-\norm{w_n(t_0)}{H^{-1/2}}-\norm{u_{1n}(t_0)}{H^s}\\
&\ge C^{-1}\de ^2\la ^{-1/2}-C\big( \de \la ^{-5/2}+\de ^2\la ^{-1}+\de ^3\la ^{-1/2}\big) -C\de \la ^{-1/2}N_n^{1/2+s}}
for any $n$.
Choosing $\de$ sufficiently small, and then $\la$ sufficiently large, we have
\eqq{\norm{u_n(t_0)}{H^s}&\ge C^{-1}\de ^2\la ^{-1/2}-C\de \la ^{-1/2}N_n^{1/2+s}.}
Therefore, we have shown that 
\eqq{\lim _{n\to \I}\norm{u^\la _{0n}}{H^s(Z_\la )}=0,\qquad \liminf _{n\to \I}\norm{u_n^\la (t_0)}{H^s(Z_\la )}\gec \de ^2\la ^{-1/2}.}
Note that $\de$, $\la$ are independent of the choice of $t_0$.
Rescaling back to the original equation \eqref{gB'}, we obtain
\eqq{\lim _{n\to \I}\norm{u_{0n}}{H^s(Z)}=0,\qquad \liminf _{n\to \I}\norm{u_n(\la ^{-2}t_0)}{H^s(Z)}\gec _{\de ,\la}1,}
which establishes Theorem~\ref{thm_main}~(II) with $T_0=\la ^{-2}$.

% From Lemma~\ref{lem_cascade}, we see that $\tnorm{u_{0n}}{H^{-1/2}}\sim \la ^{-1}\de$, so $u_{0n}\in B(C_1^{-1}\la ^{-1/2};H^{-1/2})$ for any $n$ if $\la$ is sufficiently large.
% Consequently, the solutions $u_n(t)\in Z^{-1/2}_{[0,1]}$ to \eqref{gB''} exist and have Lipschitz dependence in $\de$ uniformly in $n$:
% \eqq{\sup _n\norm{u_{n,\de}-u_{n,\de '}}{Z^{-1/2}_{[0,1]}}\lec \la ^{-1}|\de -\de '|,\quad \de ,\de '\in (0,1).}
% It also follows from the uniqueness that $\lim\limits _{\de \to 0}\norm{u_{n,\de}}{Z^{-1/2}_{[0,1]}}=0$.

%%%%%%%%%%%%%%%%%%%%%%%%%%%%%%%
%%%%%%%%%%%%%%%%%%%%%%%%%%%%%%%
%%%%%%%%%%%%%%%%%%%%%%%%%%%%%%%

\bigskip
\appendix
\section{Proof of Proposition~\ref{prop_uniqueness}}\label{sec_MT}

Here, we give a proof of Proposition~\ref{prop_uniqueness}.
Our argument is originated in the work of Muramatu and Taoka~\cite{MT04} who considered the Cauchy problem for quadratic NLS equations in some Besov type function spaces.

We now prepare mixed $\ell ^1$-Besov spaces $B^{s,b}_{2,1}$, which is the completion of $\Sc{S}(\R \times Z)$ with respect to the norm
\eqq{\norm{v}{B^{s,b}_{2,1}}:=\textstyle\sum\limits _{j=0}^\I \textstyle\sum\limits _{k=0}^\I 2^{sj}2^{bk}\norm{p_j(\xi )p_k(\tau )\ti{v}}{L^2_{\tau ,\xi}(\R \times Z^*)},}
where the dyadic decomposition $p_j$ is defined by
\eqq{p_0:=\psi ,\qquad p_j(\xi ):=\psi (2^{-j}\xi )-\psi (2^{j-1}\xi )\quad (j=1,2,\dots )}
and $\psi$ is the bump function given in Section~\ref{sec_def}. 
We see that
\eqq{\norm{v_1(t)v_2(x)}{B^{s,b}_{2,1}}=\norm{v_1}{B^b_{2,1}(\R )}\norm{v_2}{B^s_{2,1}(Z)}.}

Then, our theorem is reduced to the following result given by Muramatu and Taoka.
\begin{thm}[\cite{MT04}]\label{thm_MT}
Let $s\in \R$ and $\frac{1}{2}\le b<1$.
Assume that $f\in \Sc{S}(\R \times Z)$ satisfies $f(0,x)\equiv 0$.
Then, we have
\eqq{\lim _{T\searrow 0}\norm{f}{B^{s,b}_{2,1,T}}=0.}
\end{thm}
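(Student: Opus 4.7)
The plan is to exhibit an explicit extension of $f$ whose $B^{s,b}_{2,1}$ norm can be bounded explicitly in $T$, and then let $T\to 0$. Since $f\in \Sc{S}(\R \times Z)$ satisfies $f(0,x)\equiv 0$, Taylor's theorem in the time variable yields the factorisation $f(t,x)=t\,h(t,x)$ with $h(t,x):=\int _0^1 (\p_t f)(\sigma t,x)\,d\sigma$, where $h\in C^\infty(\R\times Z)$ inherits the Schwartz-type decay of $f$. Fix a bump function $\psi\in C^\infty_c(\R)$ with $\psi\equiv 1$ on $[-1,1]$ and $\mathrm{supp}\,\psi\subset [-2,2]$, and set $\psi_T(t):=\psi(t/T)$, $\phi(t):=t\psi(t)\in C^\infty_c(\R)$, $\phi_T(t):=\phi(t/T)$. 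Then $\psi_T f$ is an admissible extension of $f\big|_{(-T,T)\times Z}$, and
\[
\psi_T(t)f(t,x)=T\,\phi_T(t)\,h(t,x),
\]
so the prefactor $T$ already encodes the cancellation $f(0,\cdot)=0$.

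Next I would reduce the Besov estimate to a one-dimensional multiplier bound in $t$. The cutoff $\phi_T$ depends only on $t$, hence commutes with the spatial Littlewood--Paley projectors $p_j(\xi)$, and the $\ell^1$-sum in $j$ passes through trivially. It therefore suffices to prove that for every $b\in(0,1)$ and every smooth, rapidly decreasing $v:\R\to\Bo{C}$,
\[
\norm{\phi_T v}{B^b_{2,1}(\R _t)}\le C_{\phi,b}\big(\norm{v}{B^b_{2,1}(\R _t)}+T^{-b}\norm{v}{L^2(\R _t)}\big),\qquad 0<T\le 1.
\]
Granting this and summing with weight $2^{sj}$ in $j$ yields $\norm{\phi_T h}{B^{s,b}_{2,1}}\lec \norm{h}{B^{s,b}_{2,1}}+T^{-b}\norm{h}{B^{s,0}_{2,1}}$, and both right-hand norms are finite since $h$ is Schwartz. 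Hence, by the definition of the restricted norm,
\[
\norm{f}{B^{s,b}_{2,1,T}}\le \norm{\psi_T f}{B^{s,b}_{2,1}}=T\norm{\phi_T h}{B^{s,b}_{2,1}}\lec T\norm{h}{B^{s,b}_{2,1}}+T^{1-b}\norm{h}{B^{s,0}_{2,1}}\longrightarrow 0\quad(T\to 0),
\]
where the limit uses $b<1$ in an essential way.

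For the multiplier estimate I would analyse the convolution $\widehat{\phi_T v}(\tau)=(\hat\phi_T\ast \hat v)(\tau)$ with $\hat\phi_T(\tau)=T\hat\phi(T\tau)$ via a Littlewood--Paley decomposition $\hat v=\sum _{k'}p_{k'}\hat v$ and Schur's test on the bilinear kernel. The diagonal contribution $|k-k'|=O(1)$ is handled by Young's inequality together with the $T$-independent bound $\|\hat\phi_T\|_{L^1}=\|\hat\phi\|_{L^1}$, and produces the $\norm{v}{B^b_{2,1}}$ term. Off-diagonal pieces are controlled via the dyadic estimate $\int _{|\sigma|\sim 2^k}|\hat\phi_T(\sigma)|\,d\sigma\le C_N \min\shugo{T 2^k,\,(T 2^k)^{-N}}$ coming from the rapid decay of $\hat\phi$; the loss $T^{-b}$ then emerges from summing a geometric series in the scale parameter $k$ after interpolating the off-diagonal bound with the weight $(T 2^k)^b$. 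This is essentially the calculation carried out in Muramatu--Taoka~\cite{MT04}, Theorem~2.5.

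The main technical obstacle is precisely this off-diagonal multiplier estimate in the $\ell^1$-Besov setting: the $L^2$-based $H^b$ analogue follows from a short direct calculation, but the $\ell^1$ sum over the time Littlewood--Paley scales is less forgiving, and one must verify that the $B^b_{2,1}\to B^b_{2,1}$ operator norm of $\phi_T$ grows no faster than $T^{-b}$ without spurious logarithmic losses. Once this is in place, the rest of the argument is routine: the vanishing hypothesis $f(0,\cdot)=0$ enters only through the prefactor $T$ in $\psi_T f=T\phi_T h$, and the conclusion $T^{1-b}\to 0$ requires only $b<1$ (the hypothesis $b\ge \tfrac{1}{2}$, inherited from the intended application in Proposition~\ref{prop_uniqueness}, plays no role at this step).
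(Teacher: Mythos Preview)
Your argument is correct and takes a genuinely different, considerably more elementary route than the paper. The paper does not exploit the Taylor factorisation $f=t\,h$; instead it proves an intrinsic characterisation of the restricted norm (Lemma~\ref{lem_final}),
\[
\norm{f}{B^{s,b}_{2,1,T}}\sim \int_{-2T}^{2T}|\rho|^{-b}G_T(f;\rho)\,\ttfrac{d\rho}{|\rho|}+\norm{f(0,\cdot)}{B^s_{2,1}},
\]
via the Muramatu representation formula (Lemma~\ref{lem_rep}) and several technical lemmas, and then concludes by dominated convergence once $f(0,\cdot)=0$ kills the boundary term. Your approach trades this machinery for the single multiplier bound $\tnorm{\phi_T v}{B^b_{2,1}}\lec \tnorm{v}{B^b_{2,1}}+T^{-b}\tnorm{v}{L^2}$, after which the explicit factor $T$ from $\psi_T f=T\phi_T h$ wins against $T^{-b}$. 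The paper's route yields the stronger by-product Lemma~\ref{lem_final}, an equivalence valid without the hypothesis $f(0,\cdot)=0$, which may be of independent use; your route is shorter and tailored to the statement at hand.

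One remark on execution: the Littlewood--Paley/Schur-test argument you sketch for the multiplier bound works, but it is easier to use the difference-norm characterisation \eqref{equivnorm_1d} directly. Writing $\phi_T(t+r)v(t+r)-\phi_T(t)v(t)=\phi_T(t+r)\big(v(t+r)-v(t)\big)+\big(\phi_T(t+r)-\phi_T(t)\big)v(t)$ and using $\tnorm{\phi_T(\cdot+r)-\phi_T}{L^\infty}\lec \min\{1,|r|/T\}$ gives $\int |r|^{-b-1}\min\{1,|r|/T\}\,dr\sim T^{-b}$ in one line, with no risk of logarithmic losses. Also, your attribution ``essentially the calculation carried out in \cite{MT04}, Theorem~2.5'' is not quite accurate: that reference follows the representation-formula approach reproduced in the paper, not your multiplier argument.
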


We shall derive our theorem from Theorem~\ref{thm_MT}.
\begin{proof}[Proof of Proposition~\ref{prop_uniqueness}]
Assume all the condition on $\Sc{X}^s$ and $u\in \Sc{X}^s$ in Proposition~\ref{prop_uniqueness}.
First, fix an arbitrary $\e >0$ and a global-in-time extension of $u \in \Sc{X}^s_{T}$ denoted by $U$.
Then, from the density property of $\Sc{X}^s$, there exists a smooth function $V$ on $\R \times Z$ satisfying
\eq{est_uniq1}{\norm{U-V}{\Sc{X}^s}\le \e .}

We also fix such $V$.
Since $V$ is smooth,
\eqq{e^{-it\p _x^2}\big( V(t)-\psi (t)e^{it\p _x^2}V(0)\big) =e^{-it\p _x^2}V(t)-\psi (t)V(0)}
is smooth and vanishes at $t=0$.
Using the embedding $X^{s',b'}\hookrightarrow \Sc{X}^s$ with $\frac{1}{2}\le b'<1$ and the fact
\eqq{\norm{e^{it\p _x^2}v(t)}{X^{s,b}}=\norm{v}{H^b_t(H^s_x)}\lec \norm{v}{B^{s,b}_{2,1}},}
we have
\eqq{&\norm{V(t)-\psi (t)e^{it\p _x^2}V(0)}{\Sc{X}^s_{T}}\lec \norm{e^{-it\p _x^2}V(t)-\psi (t)V(0)}{B^{s',b'}_{2,1,T}}.}
Applying Theorem~\ref{thm_MT}, we see that
\eq{est_uniq2}{\lim _{T\searrow 0}\norm{V(t)-\psi (t)e^{it\p _x^2}V(0)}{\Sc{X}^s_{T}}=0.}

We next use the embedding $X^{s,b}\hookrightarrow \Sc{X}^s$ with $b>\frac{1}{2}$ to have
\eqq{&\norm{\psi (t)e^{it\p _x^2}V(0)}{\Sc{X}^s}\lec \norm{V(0)}{H^s(Z)}=\norm{(U-V)(0)}{H^s(Z)}\le \sup _{t}\norm{(U-V)(t)}{H^s(Z)}.}
Here, we have used $U(0)=0$.
From the embedding $\Sc{X}^s\hookrightarrow C_t(\R ;H^s(Z))$ and \eqref{est_uniq1}, we conclude that
\eq{est_uniq3}{\norm{\psi (t)e^{it\p _x^2}V(0)}{\Sc{X}^s}\lec \e .}

Combining \eqref{est_uniq1}--\eqref{est_uniq3}, we obtain
\eqq{\limsup _{T\searrow 0}\norm{u}{\Sc{X}^s_{T}}\lec \e .}
Since $\e$ is arbitrary, the claim follows.
\end{proof}

In the rest of this section, we shall again describe the proof of Theorem~\ref{thm_MT}.
To do this, the following difference norm of $B^{s,b}_{2,1}$ will be useful.
\begin{lem}\label{lem_equivnorm}
Let $s\in \R$ and $0<b<1$.
Then, we have
\eqq{\norm{f}{B^{s,b}_{2,1}}\sim \sum _{j=0}^\I 2^{sj}\norm{f_j}{L^2(\R \times Z)}+ \sum _{j=0}^\I 2^{sj}\int _\R |r|^{-b}\norm{f_j(t+r,x)-f_j(t,x)}{L^2(\R \times Z)}\ttfrac{dr}{|r|},}
where $f_j(t,x)=P_jf(t,x):=\F^{-1}_\xi p_j(\xi )\F _xf(t,\xi )$.
\end{lem}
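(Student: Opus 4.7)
The plan is to reduce the claim to the classical difference characterization of the one-dimensional Besov norm $B^b_{2,1}$ applied in the $t$-variable, uniformly over the frequency blocks $P_j$ in $\xi$. First I would rewrite the definition as
\eqq{\norm{f}{B^{s,b}_{2,1}} \;=\; \ttsum _{j=0}^\I 2^{sj}\ttsum _{k=0}^\I 2^{bk}\norm{p_k(\tau )\,\ti{f_j}}{L^2_{\tau ,\xi }(\R \times Z^*)},}
where $p_k(\tau )$ acts as the Littlewood--Paley projection $P_k^\tau$ in the time variable. For fixed $j$, the inner sum is precisely the Besov norm $\tnorm{f_j}{B^b_{2,1}(\R _t;\,L^2_x(Z))}$ of the Hilbert-space-valued function $t\mapsto f_j(t,\cdot )\in L^2(Z)$. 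Hence the lemma follows once I establish the scalar equivalence
\eqq{\ttsum _{k=0}^\I 2^{bk}\norm{P_k^\tau g}{L^2(\R ;H)} \;\sim \; \norm{g}{L^2(\R ;H)} + \displaystyle\int _\R |r|^{-b}\norm{g(\cdot +r)-g}{L^2(\R ;H)}\ttfrac{dr}{|r|}}
for $0<b<1$ and arbitrary Hilbert space $H$, and then sum over $j$ with weight $2^{sj}$.

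To prove the $\gec$ direction of the scalar equivalence, I decompose $g(t+r)-g(t)=\sum _{k}[(P_k^\tau g)(t+r)-(P_k^\tau g)(t)]$ and estimate each summand in $L^2(\R ;H)$ by $\min \shugo{2^k|r|,\,2}\tnorm{P_k^\tau g}{L^2}$, using the band-limited mean value inequality for small $k$ (where $2^k|r|\le 1$) and the triangle inequality for large $k$. Multiplying by $|r|^{-b-1}$, integrating in $r$, and swapping sum and integral (justified by the elementary computation $\int _\R |r|^{-b-1}\min \shugo{2^k|r|,\,2}\,dr\sim 2^{bk}$) produces the right-hand side as required.

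For the $\lec$ direction, I exploit the vanishing mean $\int p_k=0$ for $k\ge 1$ to write
\eqq{(P_k^\tau g)(t)=\displaystyle\int _\R \eta _k(r)\big[ g(t+r)-g(t)\big] \,dr,}
where $\eta _k=\F ^{-1}p_k$ obeys $|\eta _k(r)|\lec 2^k\LR{2^kr}^{-N}$. Minkowski then yields $\tnorm{P_k^\tau g}{L^2}\lec \int |\eta _k(r)|\tnorm{g(\cdot +r)-g}{L^2}dr$; multiplying by $2^{bk}$, summing in $k$, and invoking $\sum _k 2^{bk}|\eta _k(r)|\lec |r|^{-b-1}$ (again a routine estimate that uses $0<b<1$) produces the difference integral on the right-hand side. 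The $k=0$ piece is controlled directly by $\tnorm{g}{L^2}$.

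The one potential obstacle is the Hilbert-space-valued setting, but all the estimates above are scalar Fourier multiplier or kernel manipulations applied pointwise in $H$, so they pass through $H=L^2(Z)$ without change. Once the scalar-valued equivalence is in hand, summing over $j$ with weight $2^{sj}$ is a formality and completes the proof.
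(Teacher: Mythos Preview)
Your proposal is correct and follows the same route as the paper: reduce the $B^{s,b}_{2,1}$ norm to a sum over $j$ of $B^b_{2,1}(\R_t;L^2_x)$ norms of the blocks $f_j$, and then invoke the classical difference characterization of the one-dimensional Besov norm. The paper simply cites \cite{MT04}, Theorem~8.1 for this last step, whereas you supply the standard kernel argument explicitly; both are the same proof.
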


Note that the usual Besov norm has a similar representation
\eq{equivnorm_1d}{\norm{f}{B^{b}_{2,1}(\R )}\sim \norm{f}{L^2(\R )}+\int _\R |r|^{-b}\norm{f(\cdot +r)-f(\cdot )}{L^2(\R )}\ttfrac{dr}{|r|}}
for $0<b<1$.
From definition of the $B^{s,b}_{2,1}$ norm, it suffices to show
\eqq{\sum _{k=0}^\I 2^{bk}\norm{f_{jk}}{L^2(\R \times Z)}\sim \norm{f_j}{L^2(\R \times Z)}+ \int _\R |r|^{-b}\norm{f_j(t+r,x)-f_j(t,x)}{L^2(\R \times Z)}\ttfrac{dr}{|r|}}
for each $j$, where $f_{j,k}:=\F _{t,x}^{-1}p_j(\xi )p_k(\tau )\ti{f}$.
This equivalence can be verified in the same way as the proof for \eqref{equivnorm_1d}.
Thus, we omit the proof and refer to \cite{MT04}, Theorem~8.1.

Using Lemma~\ref{lem_equivnorm}, we can verify a similar representation for the restricted norm.
\begin{lem}\label{lem_final}
Let $s\in \R$ and $\frac{1}{2}\le b<1$.
Define
\eqs{G_T(f;\rho ):=\sum _{j=0}^\I 2^{sj}\norm{f_j(t+\rho ,x)-f_j(t,x)}{L^2(I_{T,\rho}\times Z)},\\
I_{T,\rho}:=[-T,T]\cap [-T-\rho ,T-\rho ].}
for $T>0$, $f\in \Sc{S}(\R \times Z)$, and $\rho\in \R$.
Then, we have
\begin{align}
\int _{-2T}^{2T}|\rho |^{-b}G_T(f;\rho )\ttfrac{d\rho}{|\rho|}&\lec \norm{f}{B^{s,b}_{2,1,T}}\label{est_final1}\\
&\lec \int _{-2T}^{2T}|\rho |^{-b}G_T(f;\rho )\ttfrac{d\rho}{|\rho|}+\norm{f(0,\cdot )}{B^s_{2,1}}.\label{est_final2}
\end{align}
for any $f\in \Sc{S}(\R \times Z)$ and $T\in (0,1]$.
\end{lem}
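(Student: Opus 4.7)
We prove the two inequalities of Lemma~\ref{lem_final} separately, in both cases exploiting the difference-norm characterization of $B^{s,b}_{2,1}$ given by Lemma~\ref{lem_equivnorm}.

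For \eqref{est_final1}, let $F\in B^{s,b}_{2,1}(\R\times Z)$ be any extension of $f|_{(-T,T)\times Z}$. Since $P_j$ is a spatial Fourier multiplier it commutes with restriction in $t$, so $F_j=f_j$ on $(-T,T)\times Z$. For $|\rho|\le 2T$ and $t\in I_{T,\rho}$, both $t$ and $t+\rho$ lie in $[-T,T]$, hence
\eqq{\norm{f_j(\cdot+\rho)-f_j(\cdot)}{L^2(I_{T,\rho}\times Z)}\le \norm{F_j(\cdot+\rho)-F_j(\cdot)}{L^2(\R\times Z)}.}
Weighting by $2^{sj}$ and summing in $j$, then multiplying by $|\rho|^{-b-1}$ and integrating over $[-2T,2T]\subset\R$, Lemma~\ref{lem_equivnorm} yields the bound by $\norm{F}{B^{s,b}_{2,1}}$. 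Taking the infimum over $F$ gives \eqref{est_final1}.

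For \eqref{est_final2} we construct an explicit extension $F$ of $f|_{(-T,T)\times Z}$ and bound its Besov norm by the right-hand side. Write $f(t,x)=f(0,x)+g(t,x)$ with $g(0,\cdot)\equiv 0$. Extend the constant-in-$t$ part by $F^{(1)}(t,x):=\psi(t/T)\,f(0,x)$ where $\psi\in C^\I_c(\R)$ equals $1$ on $[-1,1]$; by the tensor-product structure of $B^{s,b}_{2,1}$ and the scaling identity $\norm{\psi(\cdot/T)}{B^b_{2,1}(\R)}\sim T^{1/2}+T^{1/2-b}$ we obtain $\norm{F^{(1)}}{B^{s,b}_{2,1}}\lec \norm{f(0)}{B^s_{2,1}(Z)}$ uniformly in $T\in(0,1]$ (here $\tfrac{1}{2}\le b<1$ enters). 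Extend $g$ by even reflection at $\pm T$, truncated at $\pm 2T$:
\eqq{G(t,x):=\begin{cases}g(t,x) & |t|\le T,\\ g(2T-t,x) & T<t\le 2T,\\ g(-2T-t,x) & -2T\le t<-T,\\ 0 & |t|>2T,\end{cases}}
which is continuous on $\R\times Z$ thanks to $g(0,\cdot)=0$. Using Lemma~\ref{lem_equivnorm}, we estimate $\norm{G}{B^{s,b}_{2,1}}$ by its $L^2$ contribution $\sum_j 2^{sj}\norm{G_j}{L^2}\le 3\sum_j 2^{sj}\norm{g_j}{L^2([-T,T]\times Z)}$ plus the difference integral. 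For the difference integral, split into $|\rho|\le 2T$, where the reflection structure expresses $G_j(\cdot+\rho)-G_j(\cdot)$ as a finite superposition of restricted differences of $g_j$ with shifts $|\rho'|\le|\rho|$ and base points in $I_{T,\rho'}$, so the weighted $\rho$-integral is dominated by $\int_{-2T}^{2T}|\rho|^{-b}G_T(g;\rho)\,\ttfrac{d\rho}{|\rho|}=\int_{-2T}^{2T}|\rho|^{-b}G_T(f;\rho)\,\ttfrac{d\rho}{|\rho|}$ (the last equality since $G_T$ only sees differences, invariant under adding a constant in $t$); and $|\rho|>2T$, where $\norm{G_j(\cdot+\rho)-G_j(\cdot)}{L^2}\le 2\norm{G_j}{L^2}$ and $\int_{|\rho|>2T}|\rho|^{-b-1}d\rho\lec T^{-b}$, reducing to the $L^2$ step. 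Setting $F:=F^{(1)}+G$ and taking the infimum gives \eqref{est_final2}.

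The main technical obstacle is the Hardy-type bound controlling the $L^2$ piece, namely
\eqq{\textstyle\sum_{j\ge 0}2^{sj}\norm{g_j}{L^2([-T,T]\times Z)}\lec \int_{-2T}^{2T}|\rho|^{-b}G_T(g;\rho)\,\ttfrac{d\rho}{|\rho|},}
using only $g_j(0,\cdot)=0$ and $b\ge\tfrac{1}{2}$. The strategy is the telescoping dyadic identity $g_j(t,x)=\sum_{k\ge 0}\big[g_j(2^{-k}t,x)-g_j(2^{-k-1}t,x)\big]$, which writes each value as an $\ell^1$-sum of differences with shift comparable to $2^{-k}|t|$ and base point inside $[-T,T]$. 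Applying Minkowski in the $k$-sum, changing variable to express each summand as a restricted difference on $I_{T,\rho}$, and using $b\ge\tfrac{1}{2}$ to sum the resulting geometric series yields the desired bound while preserving the $\ell^1$-summability over $j$ required by the Besov structure. This is precisely where the condition $b\ge\tfrac{1}{2}$ is consumed, and it is the only step in the proof that genuinely distinguishes the restricted Besov norm from its global counterpart.
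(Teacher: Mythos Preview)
Your proof of \eqref{est_final1} matches the paper's, and your overall plan for \eqref{est_final2} --- split off $f(0,\cdot)$ and construct an explicit extension of $g=f-f(0,\cdot)$ --- is natural.  However, the even-reflection route has a genuine gap at exactly the point you flag as the ``main technical obstacle'': the Hardy-type bound
\[
\textstyle\sum_{j}2^{sj}\|g_j\|_{L^2([-T,T]\times Z)}\ \lesssim\ \int_{-2T}^{2T}|\rho|^{-b}\,G_T(g;\rho)\,\tfrac{d\rho}{|\rho|}.
\]
Your sketch is the dyadic telescope $g_j(t)=\sum_{k\ge 0}\big[g_j(2^{-k}t)-g_j(2^{-k-1}t)\big]$, then ``change variable to express each summand as a restricted difference on $I_{T,\rho}$''.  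But after the change $\sigma=2^{-k-1}t$ the $k$-th summand becomes $2^{(k+1)/2}\|g_j(2\sigma)-g_j(\sigma)\|_{L^2(|\sigma|\le 2^{-k-1}T)}$, and $g_j(2\sigma)-g_j(\sigma)=(\Delta_\sigma g_j)(\sigma)$ is a difference whose shift equals the base point.  Its $L^2_\sigma$-norm is a \emph{diagonal} integral, not $\|\Delta_\rho g_j\|_{L^2(I_{T,\rho})}$ for any fixed $\rho$, so there is no way to feed it into $G_T(g;\rho)$ and sum a geometric series in $k$.  The same difficulty appears in your treatment of the $|\rho|\le 2T$ piece of the reflected extension: on the overlap strip (e.g.\ $t\in[T-\rho,T]$ with $\rho>0$) one has $G_j(t+\rho)-G_j(t)=g_j(2T-t-\rho)-g_j(t)$, again a difference whose shift $2(T-t)-\rho$ varies with $t$, so it is \emph{not} a finite superposition of $\|\Delta_{\rho'}g_j\|_{L^2(I_{T,\rho'})}$ as you assert.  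Without a valid Hardy-type bound you also cannot close the $|\rho|>2T$ tail, which produces $T^{-b}\sum_j2^{sj}\|g_j\|_{L^2}$ and must be absorbed uniformly in $T\in(0,1]$.

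The paper circumvents exactly this issue by using, instead of reflection, Muramatu's representation formula (Lemma~\ref{lem_rep}): $f(t)-f(0)$ on $[-T,T]$ is written as a superposition $\sum_i\int_0^T\!\frac{d\mu}{\mu}$ of \emph{genuine} convolution operators with kernels $K_i$, acting on functions $f_i$ that are themselves convolutions of $f$ against mean-zero kernels $L_i$.  The mean-zero property is what converts each inner piece into an honest fixed-shift difference (Lemma~\ref{lem_lem_rep}\,(ii): $\|H_L(g;\nu)\|_{L^2}\lesssim\nu^{-1}\!\int_{|\rho|\lesssim\nu}\|\Delta_\rho g\|_{L^2(I_{T,\rho})}\,d\rho$), and the outer $\mu$-integral then produces precisely the weight $|\rho|^{-b}$ after Lemma~\ref{lem_lem_rep}\,(i).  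This machinery is heavier than reflection, but it is what supplies the fixed-shift structure your telescoping cannot.
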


Assume Lemma~\ref{lem_final} for now, then it suffices for the proof of Theorem~\ref{thm_MT} to show that
\eqq{\lim_{T\searrow 0}\int _{-2T}^{2T}|\rho |^{-b}G_T(f;\rho )\ttfrac{d\rho}{|\rho|}=0}
for $f\in \Sc{S}(\R \times Z)$.
However, we observe that $G_T(f;\rho )\le G_1(f;\rho )$ for any $f$ and $\rho$.
\eqref{est_final1} shows that $\rho ^{-b-1}G_1(f;\rho )$ is integrable on $\rho \in(-2,2)$, so we conclude that
\eqq{\int _{-2T}^{2T}|\rho |^{-b}G_T(f;\rho )\ttfrac{d\rho}{|\rho|}\le \int _{-2T}^{2T} \rho ^{-b}G_1(f;\rho )\ttfrac{d\rho}{\rho}\to 0\qquad (T\to 0),}
as desired.

It remains to prove Lemma~\ref{lem_final}.
The most important problem is what function in $B^{s,b}_{2,1}$ we should choose as a global-in-time extension of $f\big| _{t\in [-T,T]}\in B^{s,b}_{2,1,T}$.
In this point of view, the following representation lemma plays a great role.
\begin{lem}[\cite{M74}]\label{lem_rep}
Let $\omega \in C^\I (\R )$ be a non-negative function satisfying
\eqq{\supp{\omega} \subset [-1,1],\qquad \int _\R \omega (z)\,dz=1.}
Define the following functions:
\eqq{K(t,z):=z\omega (z-t),\qquad M(t,z):=\p _zK(t,z),}
and also
\begin{align*}
K_1(t,z)&:=K(t,z),& K_2(t,z)&:=K(t,z),& K_3(t,z)&:=M(t,z),\\
L_1(t,z)&:=-\p _tM(t,z),& L_2(t,z)&:=\p _zM(t,z),& L_3(t,z)&:=M(t,z).
\end{align*}
Let $T>0$ and $f\in \Sc{S}(\R)$.
Then, the following identities hold for any $t\in [-T,T]$:
\begin{align}
f(t)&=\int _0^T\int _\R \ttfrac{1}{\mu}M(\ttfrac{t}{T},\ttfrac{t-t_1}{\mu})f(t_1)\,dt_1\,\ttfrac{d\mu}{\mu}+f_0\label{id_uniq1}\\
&=\sum _{i=1}^3\int _0^T\int _\R \ttfrac{1}{\mu}K_i(\ttfrac{t}{T},\ttfrac{t-t_1}{\mu})f_i(\mu ,t_1)\,dt_1\,\ttfrac{d\mu}{\mu}+f_0,\label{id_uniq2}
\end{align}
where
\eqq{f_0&:=\int _\R \ttfrac{1}{T}\omega (\ttfrac{-t_1}{T})f(t_1,x)\,dt_1,\\
f_1(\mu ,t_1)&:=\int _\mu ^T\int _\R \ttfrac{1}{\nu}L_1(\ttfrac{t_1}{T},\ttfrac{t_1-t_2}{\nu})f(t_2)\,dt_2\,\ttfrac{-\mu}{T}\ttfrac{d\nu}{\nu},\\
f_2(\mu ,t_1)&:=\int _\mu ^T\int _\R \ttfrac{1}{\nu}L_2(\ttfrac{t_1}{T},\ttfrac{t_1-t_2}{\nu})f(t_2)\,dt_2\,\ttfrac{\mu}{\nu}\ttfrac{d\nu}{\nu},\\
f_3(\mu ,t_1)&:=\int _0^\mu \int _\R \ttfrac{1}{\nu}L_3(\ttfrac{t_1}{T},\ttfrac{t_1-t_2}{\nu})f(t_2)\,dt_2\,\ttfrac{d\nu}{\nu}.}
\end{lem}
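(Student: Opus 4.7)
My plan is to prove \eqref{id_uniq1} by a direct substitution and a single integration by parts, then to derive \eqref{id_uniq2} by iterating \eqref{id_uniq1} and performing a further integration by parts in $t_1$. For \eqref{id_uniq1}, I would substitute $s = (t-t_1)/\mu$ in the inner integral to obtain $\int _\R M(\tfrac{t}{T}, s) f(t-\mu s)\,ds$. Since $M(v, z) = \p _z[z\omega (z-v)]$, integrating by parts in $s$ (with no boundary contribution by the compact support of $\omega$) rewrites this as $-\mu G'(\mu )$, where $G(\mu ) := \int _\R \omega (s - \tfrac{t}{T}) f(t-\mu s)\,ds$ is a shifted mollification of $f$. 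Then $\int _0^T (-\mu G'(\mu ))\tfrac{d\mu}{\mu} = G(0) - G(T)$, and direct computation yields $G(0) = f(t)$ (by $\int \omega = 1$) and $G(T) = f_0$ (via change of variables), proving \eqref{id_uniq1}.

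For \eqref{id_uniq2}, I would substitute \eqref{id_uniq1} (applied to $f$ as a function of $t_1$) back into the integral on the right side of \eqref{id_uniq1}. The $f_0$ contribution vanishes since applying \eqref{id_uniq1} to the constant function $f \equiv 1$ forces $\int _0^T \int _\R \tfrac{1}{\mu ^2} M(\tfrac{t}{T}, \tfrac{t-t_1}{\mu})\,dt_1\,d\mu = 0$. What remains is a double-scale integral $A$ over $(\mu ,\nu )\in (0,T)^2$ with kernel $\tilde M_\mu (t, t_1)\,\tilde M_\nu (t_1, t_2)$ acting on $f(t_2)$, where $\tilde M_\mu (t, t_1) := \tfrac{1}{\mu}M(\tfrac{t}{T}, \tfrac{t-t_1}{\mu})$. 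Splitting $A = A_1 + A_2$ according to $\nu \le \mu$ versus $\nu > \mu$, the region $\nu \le \mu$ collapses immediately to the $i = 3$ term of \eqref{id_uniq2}, since the inner $\nu$-integral equals $f_3(\mu , t_1)$.

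For the region $\nu > \mu$, I would integrate by parts in $t_1$ via the key identity
\eqq{\p _{t_1}\Big[\tfrac{1}{\mu}K\big(\tfrac{t}{T}, \tfrac{t-t_1}{\mu}\big)\Big] = -\tfrac{1}{\mu ^2}M\big(\tfrac{t}{T}, \tfrac{t-t_1}{\mu}\big),}
which transfers the derivative from $\tilde M_\mu (t, t_1)$ onto $\tilde M_\nu (t_1, t_2)$. The chain rule yields
\eqq{\p _{t_1}\tilde M_\nu (t_1, t_2) = -\tfrac{1}{T\nu}L_1\big(\tfrac{t_1}{T}, \tfrac{t_1-t_2}{\nu}\big) + \tfrac{1}{\nu ^2}L_2\big(\tfrac{t_1}{T}, \tfrac{t_1-t_2}{\nu}\big),}
producing two terms; the inner $\nu$-integrals over $(\mu , T)$ are then recognized as $-\tfrac{T}{\mu}f_1(\mu , t_1)$ and $\tfrac{1}{\mu}f_2(\mu , t_1)$ respectively, yielding exactly the $i = 1$ and $i = 2$ terms of \eqref{id_uniq2}. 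All boundary terms in the integrations by parts vanish by the compact support of $\omega$.

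The main obstacle is the careful bookkeeping: one must verify that the prefactors $-\mu /T$ and $\mu /\nu$ appearing in the definitions of $f_1, f_2$ match the factors produced by the IBP and the chain rule, by tracking the $1/T$, $1/\mu $, and $1/\nu$ factors together with the scalings of $\tilde M_\nu$, $L_1$, and $L_2$; no deeper analytic difficulty arises since all integrands are smooth and compactly supported in $s$.
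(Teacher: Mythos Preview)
Your proposal is correct and follows essentially the same route as the paper's proof: both derive \eqref{id_uniq1} by recognizing the inner integral as $-\mu G'(\mu)$ for the mollification $G(\mu)$ (the paper's $h(\mu,t)$), and then obtain \eqref{id_uniq2} by substituting \eqref{id_uniq1} into itself, splitting the $(\mu,\nu)$ domain into $\nu\le\mu$ and $\nu>\mu$, and integrating by parts in $t_1$ on the latter region via $\p_{t_1}K(\tfrac{t}{T},\tfrac{t-t_1}{\mu})=-\tfrac{1}{\mu}M(\tfrac{t}{T},\tfrac{t-t_1}{\mu})$. The only cosmetic difference is that the paper kills the $f_0$ contribution by citing $\int_\R M(t,z)\,dz=0$ directly, rather than invoking \eqref{id_uniq1} for $f\equiv 1$ (which lies outside $\Sc S(\R)$, though the computation goes through for any bounded smooth $f$).
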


\begin{proof}
For $(\mu ,t)\in (0,\I )\times \R$, define
\eqq{h(\mu ,t):=\int _\R \ttfrac{1}{\mu}\omega (\ttfrac{t-t_1}{\mu}-\ttfrac{t}{T})f(t_1)\,dt_1,\quad H(\mu ,t):=\int _\R \ttfrac{1}{\mu}M(\ttfrac{t}{T},\ttfrac{t-t_1}{\mu})f(t_1)\,dt_1.}
Note that $h(T,t)=f_0$.
Since $\int_\R \tfrac{1}{\mu}\omega (\tfrac{t-t_1}{\mu}-\tfrac{t}{T})\,dt_1=1$, we have
\eqq{h(\mu ,t)-f(t)&=\int _\R \ttfrac{1}{\mu}\omega (\ttfrac{t-t_1}{\mu}-\ttfrac{t}{T})\{ f(t_1)-f(t)\} \,dt_1\\
&=-\int _\R \omega (z-\ttfrac{t}{T})\{ f(t-\mu z)-f(t)\} \,dz.}
We observe that $\supp{\omega (\cdot -\frac{t}{T})}\subset [-2,2]$ for any $t\in [-T,T]$, so the Lebesgue convergence theorem implies that $h(\mu ,t)\to f(t)$ as $\mu \to 0$ uniformly in $t\in [-T,T]$.

On the other hand, observe that $\p _\mu \big(\frac{1}{\mu}\omega (\frac{t-t_1}{\mu}-\frac{t}{T})\big) =-\frac{1}{\mu^2}M(\frac{t}{T},\frac{t-t_1}{\mu})$, then
\eqq{\p _\mu h(\mu ,t)=-\ttfrac{1}{\mu}H(\mu ,t).}
Thus, we have
\eqq{f_0-f(t)=-\int _0^TH(\mu ,t)\,\ttfrac{d\mu}{\mu},}
which shows \eqref{id_uniq1}.
We substitute \eqref{id_uniq1} into itself to have
\eqq{f(t)=&f_0+f_0\int _0^T\int _\R \ttfrac{1}{\mu}M(\ttfrac{t}{T},\ttfrac{t-t_1}{\mu})\,dt_1\,\ttfrac{d\mu}{\mu}\\
&+\int _0^T\int _\R \ttfrac{1}{\mu}M(\ttfrac{t}{T},\ttfrac{t-t_1}{\mu})\int _\mu ^T\int _\R \ttfrac{1}{\nu}M(\ttfrac{t_1}{T},\ttfrac{t_1-t_2}{\nu})f(t_2)\,dt_2\,\ttfrac{d\nu}{\nu}\,dt_1\,\ttfrac{d\mu}{\mu}\\
&+\int _0^T\int _\R \ttfrac{1}{\mu}M(\ttfrac{t}{T},\ttfrac{t-t_1}{\mu})\int _0^\mu \int _\R \ttfrac{1}{\nu}M(\ttfrac{t_1}{T},\ttfrac{t_1-t_2}{\nu})f(t_2)\,dt_2\,\ttfrac{d\nu}{\nu}\,dt_1\,\ttfrac{d\mu}{\mu}.}
The second term vanishes because $\int _\R M(t,z)\,dz=0$.
We next observe that
\eqq{\p _{t_1}K(\ttfrac{t}{T},\ttfrac{t-t_1}{\mu})=-\ttfrac{1}{\mu}M(\ttfrac{t}{T},\ttfrac{t-t_1}{\mu}).}
Then, after an integration by parts with respect to $t_1$, the third term becomes
\eqq{&\int _0^T\int _\R K(\ttfrac{t}{T},\ttfrac{t-t_1}{\mu})\p _{t_1}\bigg[ \int _\mu ^T\int _\R \ttfrac{1}{\nu}M(\ttfrac{t_1}{T},\ttfrac{t_1-t_2}{\nu})f(t_2)\,dt_2\,\ttfrac{d\nu}{\nu}\bigg] \,dt_1\,\ttfrac{d\mu}{\mu}\\
&\hx =\int _0^T\int _\R \ttfrac{1}{\mu}K(\ttfrac{t}{T},\ttfrac{t-t_1}{\mu})\int _\mu ^T\int _\R \ttfrac{\mu}{\nu}\Big[ \ttfrac{-1}{T}L_1+\ttfrac{1}{\nu}L_2\Big] (\ttfrac{t_1}{T},\ttfrac{t_1-t_2}{\nu})f(t_2)\,dt_2\,\ttfrac{d\nu}{\nu}\,dt_1\,\ttfrac{d\mu}{\mu}.}
Collecting the above, we obtain \eqref{id_uniq2}.
\end{proof}

We prepare one more lemma.
\begin{lem}\label{lem_lem_rep}
Let $0<T\le 1$.

(i) Assume that $K_*:\R \times \R \to \R$ is a smooth function satisfying
\eqq{(t,z)\in \supp{K_*}\qquad \Rightarrow |t-z|\le 1.}
For $(\mu ,t,x)\in (0,T)\times \R \times Z$ and $g:(0,T)\to \Sc{S}(\R \times Z)$, define
\eqq{H_K(g;\mu ,t,x):=\int _\R \ttfrac{1}{\mu}\psi (\ttfrac{t}{T})K_*(\ttfrac{t}{T},\ttfrac{t-t_1}{\mu})g(\mu ,t_1,x)\,dt_1,}
where $\psi$ is the same bump function as before.
Then, we have
\begin{gather}
\label{est_lem_rep1} \norm{H_K(g;\mu )}{L^2(\R \times Z)}\lec \norm{g(\mu )}{L^2(\R \times Z)},\\
\label{est_lem_rep2} \norm{H_K(g;\mu ,t+r,x)-H_K(g; \mu ,t,x)}{L^2_{t,x}(\R \times Z)}\lec \min \shugo{1,\,\ttfrac{|r|}{\mu}}\norm{g(\mu )}{L^2(\R \times Z)}
\end{gather}
for any $0<\mu <T$ and $r\in \R$.
The implicit constants depend only on $K_*$.

(ii) Assume that $L_*:\R \times \R \to \R$ is a smooth function satisfying $\int _\R L_*(t,z)\,dz\equiv 0$ and
\eqq{(t,z)\in \supp{L_*}\qquad \Rightarrow |t-z|\le 1.}
For $(\nu ,t_1,x)\in (0,T)\times \R \times Z$ and $g\in \Sc{S}(\R \times Z)$, define
\eqq{H_L(g;\nu ,t_1,x):=\int _\R \ttfrac{1}{\nu}\psi (\ttfrac{t_1}{5T})L_*(\ttfrac{t_1}{T},\ttfrac{t_1-t_2}{\nu})g(t_2,x)\,dt_2.}
Then, we have
\eq{est_lem_rep3}{\norm{H_L(g;\nu )}{L^2(\R \times Z)}\lec \int _\R \ttfrac{1}{\nu}\chf{[-11\nu ,11\nu ]}(\rho )\norm{g(t_1+\rho ,x)-g(t_1,x)}{L^2_{t_1,x}(\R \times Z)}\,d\rho}
for any $0<\nu <T$.
The implicit constant depends only on $L_*$.
\end{lem}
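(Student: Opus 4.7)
The plan is to reduce both parts of the lemma to Minkowski's inequality after an appropriate change of variables, together with a fundamental-theorem-of-calculus argument for the difference estimate in part (i). For \eqref{est_lem_rep1}, I would substitute $z=(t-t_1)/\mu$ to rewrite
\eqq{H_K(g;\mu,t,x) = \psi(t/T)\int_\R K_*(t/T, z)\, g(\mu, t-\mu z, x)\,dz.}
On the support of $\psi(t/T)$ we have $|t/T|\le 2$, and the diagonal support hypothesis on $K_*$ then confines the $z$-integration to $|z|\le 3$, a region on which $K_*$ is uniformly bounded. Minkowski's inequality in $z$ combined with the translation invariance of $\|\cdot\|_{L^2_{t,x}}$ immediately gives the first estimate.

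For \eqref{est_lem_rep2}, the case $|r|\ge \mu$ is reduced to \eqref{est_lem_rep1} by the triangle inequality. In the case $|r|<\mu$, I would write
\eqq{H_K(g;\mu,t+r,x) - H_K(g;\mu,t,x) = r\int_0^1 (\partial_t H_K)(g;\mu, t+\theta r, x)\,d\theta,}
apply Minkowski in $\theta$, and use translation invariance to reduce to showing $\|\partial_t H_K(g;\mu)\|_{L^2_{t,x}} \lesssim \mu^{-1}\|g(\mu)\|_{L^2_{t,x}}$. Differentiating $H_K$ in $t$ produces three terms according to which factor the derivative hits: one with prefactor $T^{-1}$ and kernel $K_*$ (from $\psi'(t/T)$), one with prefactor $T^{-1}$ and kernel $\partial_1 K_*$, and one with prefactor $\mu^{-1}$ and kernel $\partial_2 K_*$. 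Since $\partial_1 K_*$ and $\partial_2 K_*$ inherit the diagonal support condition of $K_*$, each resulting operator is again of $H_K$-type and maps $L^2\to L^2$ by the first part of (i). Because $\mu<T$, both $T^{-1}$ prefactors are dominated by $\mu^{-1}$, yielding the required derivative bound.

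For part (ii), the key feature is the vanishing moment $\int_\R L_*(t,z)\,dz\equiv 0$, which allows me to insert a zero and, after the change of variables $z=(t_1-t_2)/\nu$, write
\eqq{H_L(g;\nu,t_1,x) = \psi(t_1/5T)\int_\R L_*(t_1/T, z)\bigl[g(t_1-\nu z, x) - g(t_1,x)\bigr]\,dz.}
On the support of $\psi(t_1/5T)$ we have $|t_1/T|\le 10$, and the diagonal support condition on $L_*$ then forces $|z|\le 11$, that is, $|\nu z|\le 11\nu$. Substituting $\rho=-\nu z$ and applying Minkowski in $\rho$ (using the uniform bound on $L_*$ over the relevant compact set) produces exactly \eqref{est_lem_rep3}.

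The main technical point is the derivative estimate for $\partial_t H_K$ in (i): one must recognize both that $\partial_1 K_*$ and $\partial_2 K_*$ preserve the structural support hypothesis on $K_*$ so that all three derivative pieces are again of $H_K$-type, and that the hypothesis $0<\mu<T\le 1$ dominates the $T^{-1}$ prefactors from $\psi'$ and $\partial_1 K_*$ by the $\mu^{-1}$ coming from $\partial_2 K_*$. Once this derivative bound is in hand, both the $\min\{1,|r|/\mu\}$ shape in (i) and the difference-quotient bound in (ii) follow mechanically from Minkowski's inequality.
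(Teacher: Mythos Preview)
Your argument is correct and parallels the paper's closely. The paper packages the $L^2\to L^2$ boundedness as a separate Schur-test lemma (bounding $\sup_t\int|k(t,t_1)|\,dt_1$ and $\sup_{t_1}\int|k(t,t_1)|\,dt$ for the integral kernel) rather than invoking Minkowski after the change of variables, and for \eqref{est_lem_rep2} it applies the mean value theorem directly to the difference kernel rather than the fundamental theorem of calculus to $H_K$ itself; these are equivalent formulations of the same idea. One point worth flagging: in (ii) the paper's proof actually establishes the slightly sharper conclusion that the $L^2_{t_1}$ norm on the right may be taken over $I_{T,\rho}=[-T,T]\cap[-T-\rho,T-\rho]$ rather than all of $\R$, by checking that the support conditions $|t_1/(5T)|\le 2$ and $|t_1/T - z|\le 1$ force both $|t_1|\le T$ and $|t_1+\rho|\le T$. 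This refinement, not visible in the lemma statement, is precisely what is invoked in the proof of Lemma~\ref{lem_final}, where the right-hand side must match the definition of $G_T(f;\rho)$.
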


Note that the functions $K_1,K_2,K_3$ defined in Lemma~\ref{lem_rep} satisfy the condition for $K_*$.
Also, $L_1$, $L_2$, and $L_3$ all meet the condition for $L_*$.

For the proof of Lemma~\ref{lem_lem_rep}, we recall the following.
\begin{lem}\label{lem_lem2_rep}
Suppose that a smooth function $k$ on $\R \times \R$ satisfies
\eq{cond_lem_rep}{\sup _t\int _\R |k(t,t_1)|\,dt_1\le C_0,\qquad \sup _{t_1}\int _\R |k(t,t_1)|\,dt\le C_0}
for some $C_0>0$.
Then, for any $f\in \Sc{S}(\R \times Z)$, we have
\eqq{\norm{\int _\R k(t,t_1)f(t_1,x)\,dt_1}{L^2_{t,x}(\R \times Z)}\le C_0\norm{f}{L^2(\R \times Z)}.}
\end{lem}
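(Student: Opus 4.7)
The statement is the standard Schur test for integral operators on $L^2$, extended trivially in the transverse variable $x \in Z$. The plan is to treat $x$ as a parameter and carry out the usual two-step Cauchy--Schwarz argument for each fixed $x$, then integrate in $x$ by Fubini.

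Concretely, write $F(t,x) := \int _\R k(t,t_1) f(t_1,x)\,dt_1$. For fixed $x$, I split the kernel symmetrically as $|k(t,t_1)| = |k(t,t_1)|^{1/2}\cdot |k(t,t_1)|^{1/2}$ and apply \CSineq in $t_1$:
\eqq{|F(t,x)|^2 \le \Big( \int _\R |k(t,t_1)|\,dt_1 \Big) \Big( \int _\R |k(t,t_1)|\,|f(t_1,x)|^2\,dt_1\Big) \le C_0 \int _\R |k(t,t_1)|\,|f(t_1,x)|^2\,dt_1,}
where the first factor is controlled by the first hypothesis in \eqref{cond_lem_rep}. Integrating in $t$ and using Fubini together with the second hypothesis in \eqref{cond_lem_rep} yields
\eqq{\int _\R |F(t,x)|^2\,dt \le C_0 \int _\R \Big( \int _\R |k(t,t_1)|\,dt \Big) |f(t_1,x)|^2\,dt_1 \le C_0^2 \int _\R |f(t_1,x)|^2\,dt_1.}

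Finally, integrating in $x \in Z$ and taking square roots gives $\tnorm{F}{L^2_{t,x}} \le C_0 \tnorm{f}{L^2(\R \times Z)}$, which is the claim. Since the only obstacle is justifying the use of Fubini, which is immediate because $f \in \Sc{S}(\R \times Z)$ and $k$ is smooth with the $L^1$ control \eqref{cond_lem_rep}, the proof is essentially a mechanical transcription of the Schur test with an inert extra variable; there is no substantial difficulty to flag.
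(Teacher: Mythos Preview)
Your proof is correct and essentially the same as the paper's: both carry out the standard Schur test via the splitting $|k|=|k|^{1/2}|k|^{1/2}$ and \CSineq in $t_1$, followed by Fubini and the second bound in \eqref{cond_lem_rep}. The only cosmetic difference is that the paper first applies Minkowski in $x$ to reduce to the scalar function $\tnorm{f(t_1,\cdot)}{L^2_x}$ and then runs the Schur argument, whereas you fix $x$ and integrate in $x$ at the end; the content is identical.
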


\begin{proof}
We use the Minkowski and the Cauchy-Schwarz inequalities to obtain
\eqq{&\norm{\int _\R k(t,t_1)f(t_1,x)\,dt_1}{L^2_{t,x}(\R \times Z)}^2\\
&\hx \le \int _\R \bigg( \int _\R |k(t,t_1)|\norm{f(t_1)}{L^2_x(Z)}\,dt_1\bigg) ^2\,dt\\
&\hx \le \int _\R \bigg( \int _\R |k(t,t_1)|\,dt_1\bigg) \bigg( \int _\R |k(t,t_1)|\norm{f(t_1)}{L^2_x(Z)}^2\,dt_1\bigg) \,dt\\
&\hx \le \sup _t\int _\R |k(t,t_1)|\,dt_1\cdot \int _\R \norm{f(t_1)}{L^2_x(Z)}^2\bigg( \int _\R |k(t,t_1)|\,dt\bigg) \,dt_1\\
&\hx \le C_0^2 \norm{f}{L^2(\R \times Z)}^2.\qedhere}
\end{proof}

\begin{proof}[Proof of Lemma~\ref{lem_lem_rep}]
(i) Put $k(t,t_1)=\frac{1}{\mu}\psi (\frac{t}{T})K_*(\frac{t}{T},\frac{t-t_1}{\mu})$.
From changes of variables, we see that
\eqs{\int _\R |k(t,t_1)|\,dt_1\le \int _\R |K_*(\ttfrac{t}{T},z)|\,dz\lec 1,\\
\int _\R |k(t,t_1)|\,dt\le \int _\R \psi (\ttfrac{t_1+\mu z}{T})|K_*(\ttfrac{t_1+\mu z}{T},z)|\,dz\lec \int _\R \chf{[-3,3]}(z)\,dz\lec 1.}
(Note that $|\frac{t_1+\mu z}{T}|\le 2$ and $|\frac{t_1+\mu z}{T}-z|\le 1$ imply $|z|\le 3$.)
Then, \eqref{est_lem_rep1} follows from Lemma~\ref{lem_lem2_rep}.

Also, for \eqref{est_lem_rep2}, it suffices to verify that
\eqq{k(t,t_1)=\ttfrac{1}{\mu}\psi (\ttfrac{t+r}{T})K_*(\ttfrac{t+r}{T},\ttfrac{t+r-t_1}{\mu})-\ttfrac{1}{\mu}\psi (\ttfrac{t}{T})K_*(\ttfrac{t}{T},\ttfrac{t-t_1}{\mu})}
satisfies \eqref{cond_lem_rep} with $C_0\lec \min \shugo{1,\,\frac{|r|}{\mu}}$.
The triangle inequality and the above argument imply that $C_0\lec 1$ for any $r\in \R$.
On the other hand, an application of the mean value theorem followed by change of variables shows that $C_0\lec \frac{|r|}{\mu}$ if $|r|\le \mu$, as desired.

(ii) We first see that
\eqq{H_L(g;\nu ,t_1,x)=\int _\R \ttfrac{1}{\nu}\psi (\ttfrac{t_1}{5T})L_*(\ttfrac{t_1}{T},\ttfrac{-\rho}{\nu})g(t_1+\rho ,x)\,d\rho .}
The term $0\equiv -\int _\R \frac{1}{\nu}\psi (\frac{t_1}{5T})L_*(\frac{t_1}{T},\frac{-\rho}{\nu})g(t_1,x)\,d\rho$ is added and we have
\eqq{H_L(g;\nu ,t_1,x)=\int _\R \ttfrac{1}{\nu}\psi (\ttfrac{t_1}{5T})L_*(\ttfrac{t_1}{T},\ttfrac{-\rho}{\nu})\big\{ g(t_1+\rho ,x)-g(t_1,x)\big\} \,d\rho .}
Then, it is sufficient to show that
\eq{cond_rep1}{\big| \ttfrac{t_1}{5T}\big| \le 2,\qquad \big| \ttfrac{t_1}{T}+\ttfrac{\rho}{\nu}\big| \le 1}
implies
\eq{cond_rep2}{-11\nu \le \rho \le 11\nu, \qquad t_1\in I_{T,\rho}=[-T,T]\cap [-T-\rho ,T-\rho ].}

It follows from \eqref{cond_rep1} that
\eqq{\big| \ttfrac{\rho}{\nu}\big| \le \big| \ttfrac{t_1}{T}+\ttfrac{\rho}{\nu}\big| +\big| \ttfrac{t_1}{T}\big| \le 11,}
and we have the first one in \eqref{cond_rep2}.
It remains to prove $\big| \frac{t_1+\rho}{T}\big| \le 1$, for which we divide the analysis corresponding to the sign of $t_1$ and $\rho$.
If $t_1,\rho \ge 0$, then we have
\eqq{0\le \ttfrac{t_1+\rho}{T}\le \ttfrac{t_1}{T}+\ttfrac{\rho}{\nu}\le 1.}
If $t_1\le 0 \le \rho$, then
\eqq{-1\le \ttfrac{t_1}{T}\le \ttfrac{t_1+\rho}{T}\le \ttfrac{t_1}{T}+\ttfrac{\rho}{\nu}\le 1.}
The remaining cases are reduced to the above two cases.
\end{proof}

Finally, we shall prove Lemma~\ref{lem_final} and conclude this section.
\begin{proof}[Proof of Lemma~\ref{lem_final}]
Let $F$ be any extension of $f\big| _{t\in [-T,T]}\in B^{s,b}_{2,1,T}$.
Then, we use Lemma~\ref{lem_equivnorm} to see that
\eqq{\int _{-2T}^{2T}|\rho |^{-b}G_T(f;\rho )\ttfrac{d\rho}{|\rho|}\le \int _\R |\rho |^{-b}G_\I (F;\rho )\ttfrac{d\rho}{|\rho|}\lec \norm{F}{B^{s,b}_{2,1}}.}
Taking infimum over $F$, we obtain \eqref{est_final1}.

For \eqref{est_final2}, we first estimate $\tnorm{f(0,x)}{B^{s,b}_{2,1,T}}$.
Choosing $\psi (t)f(0,x)$ as an extension of $\chf{[-T,T]}(t)f(0,x)\in B^{s,b}_{2,1,T}$, we have
\eqq{\norm{f(0,x)}{B^{s,b}_{2,1,T}}\le \norm{\psi}{B^b_{2,1}(\R )}\norm{f(0,\cdot )}{B^s_{2,1}(Z)}\lec \norm{f(0,\cdot )}{B^s_{2,1}(Z)}.}

Therefore, it remains to show
\eqq{\norm{f(t,x)-f(0,x)}{B^{s,b}_{2,1,T}}\lec \int _{-2T}^{2T}|\rho |^{-b}G_T(f;\rho )\ttfrac{d\rho}{|\rho|}.}
We note that the integral in the right-hand side with respect to $\rho$ can be replaced with $\int _\R$, because $I_{T,\rho }=\emptyset$ if $|\rho |>2T$.

From the representation formula \eqref{id_uniq2}, we have the following identity for $(t,x)\in [-T,T]\times Z$:
\eqq{f(t,x)-f(0,x)=&\sum _{i=1}^3\int _0^T\int _\R \ttfrac{1}{\mu}K_i(\ttfrac{t}{T},\ttfrac{t-t_1}{\mu})f_i(\mu ,t_1,x)\,dt_1\,\ttfrac{d\mu}{\mu}\\
&-\sum _{i=1}^3\int _0^T\int _\R \ttfrac{1}{\mu}K_i(0,\ttfrac{-t_1}{\mu})f_i(\mu ,t_1,x)\,dt_1\,\ttfrac{d\mu}{\mu},}
\eqq{
f_1(\mu ,t_1,x)&:=\int _\mu ^T\int _\R \ttfrac{1}{\nu}L_1(\ttfrac{t_1}{T},\ttfrac{t_1-t_2}{\nu})f(t_2,x)\,dt_2\,\ttfrac{-\mu}{T}\ttfrac{d\nu}{\nu},\\
f_2(\mu ,t_1,x)&:=\int _\mu ^T\int _\R \ttfrac{1}{\nu}L_2(\ttfrac{t_1}{T},\ttfrac{t_1-t_2}{\nu})f(t_2,x)\,dt_2\,\ttfrac{\mu}{\nu}\ttfrac{d\nu}{\nu},\\
f_3(\mu ,t_1,x)&:=\int _0^\mu \int _\R \ttfrac{1}{\nu}L_3(\ttfrac{t_1}{T},\ttfrac{t_1-t_2}{\nu})f(t_2,x)\,dt_2\,\ttfrac{d\nu}{\nu}.}
Then, we use the following function as an extension of $f(t,x)-f(0,x)\big| _{t\in[-T,T]}\in B^{s,b}_{2,1,T}$:
\eq{id_uniq11}{&\sum _{i=1}^3\int _0^T\int _\R \ttfrac{1}{\mu}\psi (\ttfrac{t}{T})K_i(\ttfrac{t}{T},\ttfrac{t-t_1}{\mu})\psi (\ttfrac{t_1}{5T})f_i(\mu ,t_1,x)\,dt_1\,\ttfrac{d\mu}{\mu}\\
&-\sum _{i=1}^3\int _0^T\int _\R \ttfrac{1}{\mu}\psi(t)K_i(0,\ttfrac{-t_1}{\mu})\psi (\ttfrac{t_1}{5T})f_i(\mu ,t_1,x)\,dt_1\,\ttfrac{d\mu}{\mu}.}
We see from the support property of $\psi$ and $K_i$ that the above function is actually equal to $f(t)-f(0)$ on $[-T,T]$.

We now estimate the first three terms in \eqref{id_uniq11}, which is simply written as
\eqs{\sum _{i=1}^3\int _0^TF_i(\mu ,t,x)\,\ttfrac{d\mu}{\mu},\quad F_i(\mu ,t,x):=\int _\R \ttfrac{1}{\mu}\psi (\ttfrac{t}{T})K_i(\ttfrac{t}{T},\ttfrac{t-t_1}{\mu})\psi (\ttfrac{t_1}{5T})f_i(\mu ,t_1,x)\,dt_1.}

Applying Lemma~\ref{lem_equivnorm}, we reduce the estimate of $\tnorm{F_i(\mu )}{B^{s,b}_{2,1}}$ to that of
\begin{gather}
\sum _{j=0}^\I 2^{sj}\norm{P_jF_i(\mu )}{L^2(\R \times Z)},\label{quan1}\\
\sum _{j=0}^\I 2^{sj}\int _\R |r|^{-b}\norm{P_jF_i(\mu ,t+r,x)-P_jF_i(\mu ,t,x)}{L^2_{t,x}(\R \times Z)}\,\ttfrac{dr}{|r|}.\label{quan2}
\end{gather}

For \eqref{quan1}, we apply Lemma~\ref{lem_lem_rep} (i) to bound $\tnorm{P_jF_i(\mu )}{L^2(\R \times Z)}$ by
\eqq{\norm{\psi (\ttfrac{t_1}{5T})P_jf_i(\mu ,t_1,x)}{L^2_{t_1,x}(\R \times Z)},}
which is then estimated with Lemma~\ref{lem_lem_rep} (ii) by
\eqs{\int _\mu ^T\int _\R \ttfrac{1}{\nu} \chf{[-11\nu,11\nu]}(\rho )\norm{P_jf(t_1+\rho )-P_jf(t_1,x)}{L^2_{t_1,x}(I_{T,\rho }\times Z)}d\rho \,\ttfrac{\mu}{T}\ttfrac{d\nu}{\nu},\\
\int _\mu ^T\int _\R \ttfrac{1}{\nu} \chf{[-11\nu,11\nu]}(\rho )\norm{P_jf(t_1+\rho )-P_jf(t_1,x)}{L^2_{t_1,x}(I_{T,\rho }\times Z)}d\rho \,\ttfrac{\mu}{\nu}\ttfrac{d\nu}{\nu},\\
\int _\mu ^T\int _\R \ttfrac{1}{\nu} \chf{[-11\nu,11\nu]}(\rho )\norm{P_jf(t_1+\rho )-P_jf(t_1,x)}{L^2_{t_1,x}(I_{T,\rho }\times Z)}d\rho \,\ttfrac{d\nu}{\nu}.}
Therefore, \eqref{quan1} is bounded by
\eqs{\int _\mu ^T\int _\R \ttfrac{1}{\nu} \chf{[-11\nu,11\nu]}(\rho )G_T(f;\rho )\ttfrac{d\rho}{|\rho |} \,|\rho| \ttfrac{\mu}{T}\ttfrac{d\nu}{\nu},\\
\int _\mu ^T\int _\R \ttfrac{1}{\nu} \chf{[-11\nu,11\nu]}(\rho )G_T(f;\rho )\ttfrac{d\rho}{|\rho |} \,|\rho| \ttfrac{\mu}{\nu}\ttfrac{d\nu}{\nu},\\
\int _\mu ^T\int _\R \ttfrac{1}{\nu} \chf{[-11\nu,11\nu]}(\rho )G_T(f;\rho )\ttfrac{d\rho}{|\rho |} \,|\rho| \ttfrac{d\nu}{\nu}.}
We next compute the integral with respect to $\nu$.
For the first and the second one, we have
\eqq{&\int _\mu ^T\ttfrac{1}{\nu} \chf{[-11\nu,11\nu]}(\rho )|\rho |\ttfrac{\mu}{T}\ttfrac{d\nu}{\nu}\le \int _\mu ^T\ttfrac{1}{\nu} \chf{[-11\nu,11\nu]}(\rho )|\rho |\ttfrac{\mu}{\nu}\ttfrac{d\nu}{\nu}
\lec \min \shugo{\ttfrac{\mu}{|\rho |},\,\ttfrac{|\rho |}{\mu}}.}
Similarly, we obtain the bound $\chf{[-11,11]}(\frac{|\rho |}{\mu})$ for the last one.
Collecting these results, we have
\eqq{\eqref{quan1}\lec \int _\R \Big( \min \shugo{\ttfrac{\mu}{|\rho |},\,\ttfrac{|\rho |}{\mu}}+\chf{[-11,11]}(\ttfrac{|\rho |}{\mu})\Big) G_T(f;\rho )\ttfrac{d\rho}{|\rho |}.}
In the same manner, Lemma~\ref{lem_lem_rep} implies that
\eqq{\eqref{quan2}&\lec \int _\R \Big( \min \shugo{\ttfrac{\mu}{|\rho |},\,\ttfrac{|\rho |}{\mu}}+\chf{[-11,11]}(\ttfrac{|\rho |}{\mu})\Big) \int _\R |r|^{-b}\min \shugo{1,\,\ttfrac{|r|}{\mu}}\,\ttfrac{dr}{|r|}\,G_T(f;\rho )\ttfrac{d\rho}{|\rho |}\\
&\lec \mu ^{-b}\int _\R \Big( \min \shugo{\ttfrac{\mu}{|\rho |},\,\ttfrac{|\rho |}{\mu}}+\chf{[-11,11]}(\ttfrac{|\rho |}{\mu})\Big) G_T(f;\rho )\ttfrac{d\rho}{|\rho |}.}
Then, we calculate the integral with respect to $\mu$,
\eqq{&\int _0^T\big( 1+\mu ^{-b}\big) \Big( \min \shugo{\ttfrac{\mu}{|\rho |},\,\ttfrac{|\rho |}{\mu}}+\chf{[-11,11]}(\ttfrac{|\rho |}{\mu})\Big) \,\ttfrac{d\mu}{\mu}\\
&\hx \lec \int _0^{|\rho |}\mu ^{-b}\,\ttfrac{d\mu}{|\rho |} +\int _{|\rho |}^\I \mu ^{-b-2}|\rho |\,d\mu +\int _{|\rho |/11}^\I \mu ^{-b-1}d\mu ~\lec ~|\rho |^{-b},}
and complete the estimate for the first three terms in \eqref{id_uniq11}.

The second three terms in \eqref{id_uniq11} are much easier to treat.
Let us estimate the term including $f_1$, for instance.
First, we have
\eqq{&\norm{\int _0^T\int _\R \ttfrac{1}{\mu}\psi (t)K_1(0,\ttfrac{-t_1}{\mu})\psi (\ttfrac{t_1}{5T})f_1(\mu ,t_1,x)\,dt_1\,\ttfrac{d\mu}{\mu}}{B^{s,b}_{2,1}}\\
&\hx =\norm{\psi}{B^b_{2,1}(\R )}\norm{\int _0^T\int _\R \ttfrac{1}{\mu}K_1(0,\ttfrac{-t_1}{\mu})\psi (\ttfrac{t_1}{5T})f_1(\mu ,t_1,x)\,dt_1\,\ttfrac{d\mu}{\mu}}{B^s_{2,1}(Z)}\\
&\hx \lec \sum _{j=0}^\I 2^{sj}\int _0^T\int _\R \ttfrac{1}{\mu}\big| K_1(0,\ttfrac{-t_1}{\mu})\big| \norm{\psi (\ttfrac{t_1}{5T})P_jf_1(\mu ,t_1,x)}{L^2_{x}(Z)}\,dt_1\,\ttfrac{d\mu}{\mu}.}
We apply the Cauchy-Schwarz inequality in $t_1$ to bound it by
\eqq{\sum _{j=0}^\I 2^{sj}\int _0^T\ttfrac{1}{\mu}\Big( \int _\R \big| K_1(0,\ttfrac{-t_1}{\mu})\big| ^2\,dt_1\Big) ^{1/2}\norm{\psi (\ttfrac{t_1}{5T})P_jf_1(\mu ,t_1,x)}{L^2_{t_1,x}(\R \times Z)}\,\ttfrac{d\mu}{\mu}.}
On the other hand, a simple calculation shows that
\eqq{\ttfrac{1}{\mu}\Big( \int _\R \big| K_1(0,\ttfrac{-t_1}{\mu})\big| ^2\,dt_1\Big) ^{1/2}~\lec \mu ^{-1/2}\lec \mu ^{-b}.}
Thus, we obtain exactly the same quantity as that we have estimated above.
The other two terms are treated in the same way, and the proof is completed.
\end{proof}

\bigskip
\section*{Acknowledgments}

The author would like to express his great appreciation to Prof. Kotaro Tsugawa for a number of precious suggestions.
This work was partially supported by Grant-in-Aid for JSPS Fellows 08J02196.

%%%%%%%%%%%%%%%%%%%%%%%%%%%%%%%%%%%
%%%%%%%%%%%%%%%%%%%%%%%%%%%%%%%%%%%
%%%%%%%%%%%%%%%%%%%%%%%%%%%%%%%%%%%

\bigskip

\end{document}